%% file: LPCPstep2.tex
\documentclass[monochrome,12pt]{article}
\usepackage{amssymb, latexsym, amsmath, amsthm, amsfonts, amssymb, paralist, extpfeil, mathrsfs}
\usepackage{cases}
\allowdisplaybreaks
\usepackage[title,titletoc]{appendix}
\usepackage{yfonts}
\usepackage[dvipsnames,svgnames,x11names,table]{xcolor}
\usepackage{color}
\usepackage{oplotsymbl}
\usepackage{graphicx,overpic,booktabs,makecell}
\usepackage{hyperref}
\hypersetup{
	CJKbookmarks=true,
	bookmarksopen=true,
	pdfstartview=FitH,
	colorlinks=true,
	linkcolor=blue,
	filecolor=blue,
	urlcolor=blue,
	citecolor=blue,
	bookmarksnumbered=true,
}

\usepackage{geometry}
\usepackage{enumerate}

\usepackage{xifthen}
\usepackage[numbers,sort&compress]{natbib}
\renewcommand{\cite}[2][]{\ifthenelse{\isempty{#1}}{\citep{#2}}{\citetext{\citenum{#2}, #1}}}

\input{MyOperators.tex}

\newtheorem{theorem}{Theorem}[section]
\newtheorem{proposition}[theorem]{Proposition}
\newtheorem{lemma}[theorem]{Lemma}

\theoremstyle{definition}

\newtheorem{remark}[theorem]{Remark}
\newtheorem{example}[theorem]{Example}

\numberwithin{equation}{section}

\title{\bf Lattice point counting problems on step-two nilpotent Lie groups}
\author{Sheng-Chen Mao\footnote{Corresponding author. \texttt{maoshengchen@lzu.edu.cn}.}  \date{}
}

\begin{document}
\renewcommand{\theequation}{\thesection.\arabic{equation}}
\setcounter{equation}{0} \maketitle
\vspace{-1.0cm}
\bigskip
	
{\noindent\bf Abstract.}
We develop the  theory of lattice point counting on connected and simply connected  nilpotent Lie groups of step-two, endowed with the parabolic type dilation and a family of  homogeneous norms $
\mathcal{N}_{\alpha,M}(x, t)=\left(|M_1x|^\alpha + |M_2t|^{\alpha / 2}\right)^{1 / \alpha}$ adapted to the dilation structure, where $\alpha>0$ and $M_1,M_2$ are invertible matrices. With appropriate notions of lattices, the domains to be counted   are balls associated to these norms, and explicit counting discrepancy estimates are deduced for all possible dimensions and all $\alpha>0$. The bounds are sharp  when the  group center is unidimensional and $\alpha=2$, in certain rational sense. Our study also generalizes and even quantitatively improves previous results on Heisenberg groups  obtained  by Garg--Nevo--Taylor \cite[\textit{Ann. Inst. Fourier}, 2015]{GNT15}: 
(i) In  dimension $5$, the exponent of logarithmic factor  is lowered from $2/3$ to ${1}/{3}$ if $\alpha \in(3,4] $ or $\alpha=1$; and the  factor $\log ^{2/3} R $ is dropped if $\alpha\in(2,3]$. (ii) In  dimension $3$, the estimation is upgraded from $O_\epsilon(R^{ 5/2+\epsilon})$  to $O(R^{2}\log^{ 1/2} R)$ for $\alpha=1$, and  to $O(R^{{19}/{8}})$ for  $\alpha\in (1,2)$; and  the  factor $\log R$ is removed for $\alpha>4$. Moreover, as a byproduct, we extend the lattice counting near Heisenberg spheres, recently considered by Campolongo--Taylor \cite[\textit{Matematica}, 2023]{CT23} and Srivastava--Taylor \cite[\textit{J. Fourier Anal. Appl.}, 2026]{ST26}, to the above step-two group setting with arbitrary dimensional group center, where some quantitative improvements are also attained.  Our method relies upon  Poisson's summation formulas, oscillatory integral estimates and asymptotic properties as well as recursion formulas of Bessel functions.  

\bigskip
{\noindent\bf MSC2020:} primary 11P21, 22E40, 43A80; secondary 42B99, 33C10
	
\smallskip
{\noindent\bf Keywords:} Lattice point; Nilpotent Lie group; Fourier analysis; Bessel function
	
\medskip
\tableofcontents
\medskip
	
\section{Introduction} \label{intro}
The problem of lattice point counting  is an important branch of analytic number theory, with a rich history of over 200 years; see  \cite[]{IKKN06,BKZ18}.  Given an Euclidean ball $\cB_n(r)$ of radius $r>0$ in $\rr^n$, one hopes to approximate the discrete quantity --- the number of integer points contained in the ball, by the continuous quantity --- its volume, for large  $r$, where the difficulty lies in determining the correct order of the error term (called the \textit{discrepancy}).
More specifically, we aim to establish the following asymptotic 
\begin{equation} \label{ee01} 
    \#\left(\zz^n \cap  r\, \cB_n(1)\right) -r^n \vol(\cB_n(1)) =O_\tz(r^\theta), \qquad \mathrm{as}\,\,\, r\to\infty,
\end{equation}
with $\tz<n$ as small as possible, 
where $\cB_n(1)$ denotes the standard unit closed ball and the infimum of all such $\tz$ is called the \textit{correct order}. This goal has been achieved for dimension  $n\ge4$, with the correct order equal to $n-2$. For the notorious cases of dimensions 3 and 2,  also known as the sphere
problem and Gauss circle problem, the solutions remain wide open; and the  correct orders are conjectured to be $1$ and $1/2$, respectively.  The best available exponents are $21/16$ for $n=3$ by D.R. Heath-Brown  \cite[]{Hea99}, and $0.628966\cdots$ for $n=2$ by Li--Yang  \cite[]{LY23a}.

It is natural to study  similar problems for general domains $\Oz\subset\rr^n$ in \eqref{ee01} besides balls $\cB_n(1) $, under suitable conditions. Extensive researches  have been done in this direction, and it continues to be an active area to this day. The reader is referred to  \cite[]{Kra00,Kra88,Fri82,Hux96,IKKN06,Wal57,Got04} for the developments. We mention that  when $\Oz $ is compact convex  with smooth boundary and nonzero Gaussian
curvature everywhere,  the problem is   well understood relatively; see  \cite[]{Guo12,Hla50,IKKN06}. However, it is not yet solved for any $n\ge4$, where the conjectured correct order coincides with the  ball case, i.e., $n-2$. Regarding the case that the curvature may vanish, which includes $l^{p}$-norm ball ($p\in(2,\infty)$)  as a canonical example, see e.g. \cite[]{IKKN06, Ran66a,BCG+20}. For non-convex domain such as $l^{p}$-norm ball ($p\in(0,1)$), see e.g. the work of W.-G. Nowak \cite[]{Now80,Now81,Now83}. There are also many other extensions of \eqref{ee01} with deep insights on the underlying groups and their lattices as well as the domains to be counted. See e.g. \cite[]{GN10,GN12,BL24} and the  references therein for lattice counting on topological groups. However, in such framework   the correct order has not been  determined even for any simple Lie group yet,  to our knowledge. On the other hand, the research on nilpotent case seems rather limited, being restricted exclusively to the Heisenberg group \cite[]{GNT15,Gat20,CT23,ST26}, etc, which is usually recognized as the simplest example of non-Abelian and non-compact Lie groups.

Nilpotent Lie groups constitute a basic subject of contemporary mathematical research, with connections to   differential geometry (including sub-Riemannian geometry), complex analysis, non-commutative harmonic analysis and PDEs; see \cite[]{Le25,Goo76,BLU07,FS82,FR16} and the references therein. This context, particularly  the step-two case, has received much attention very recently; see e.g.  \cite[]{Sri24,LY23,RS24,MZ25,DNGR25}. In addition, nilpotent Lie groups and their discrete subgroups are significant in the investigation of semisimple Lie groups and arithmetic subgroups of linear semisimple Lie groups, cf.  \cite[]{Ji08},   which also can be traced back to many number-theoretic problems, especially in the modular function theory and the reduction theory of quadratic forms; cf. e.g.  \cite[]{Mar91,Rag72,PRR23}.

In this paper, we endeavor  to develop the lattice counting theory in nilpotent Lie groups,   focusing mainly on the step-two class  equipped with a  homogeneous structure of parabolic type. As shall be seen, this setting affords a clear analogue of the classical lattice point counting problem, and makes it possible to achieve a complete understanding of the distribution of lattice points within a non-Euclidean frame, in the sense of obtaining the  optimal discrepancy exponent, as in some known cases on Heisenberg groups.

Recall the Heisenberg group $\hh^d \cong \rr^{2d} \times \rr$ is defined by the group law
\begin{equation} \label{Hd} 
(x,t) \circ (x',t') = \left(x + x', t + t' + \langle  \jj x, x^{\prime} \rangle \right)\quad  \mbox{with}\quad \jj:= 2  \left(\begin{array}{cc}
0 & \ii_d \\
-\ii_d & 0
\end{array}\right).
\end{equation}
In 2015, by combining the Poisson summation formula, van der Corput lemma and the asymptotics of Bessel functions, Garg, Nevo and Taylor \cite[]{GNT15} examined an analogue of  \eqref{ee01} on $\hh^d$ and arrived at that, for all  $R\ge10$,
\begin{equation} \label{ee02} 
    \left|\#\left(\mathbb{Z}^{2 d+1} \cap \mathscr{B}_R^{\az, A}\right)-\vol\left(\mathscr{B}_1^{\az, A}\right) R^{2d+2}\right| \lesssim 
    \begin{cases}
    R^{2 d}, &\mbox{as}\ d\ge1,\,\az=2; \\
    R^{2 } \log R, &\mbox{as}\ d=1,\,2<\az\le4; \\
    R^{2 + \frac{2\az-8}{3\az-4}}\log R, &\mbox{as}\ d=1,\,\az>4, \\
    \end{cases} 
\end{equation}
where $\mathscr{B}_R^{\az, A}:=\{(x,t)\in\hh^d: |x|^\az +A|t|^{\az/2} \le R^\az \}$ with $\az,A>0$. Moreover, from a slicing argument  and the Euler-MacLaurin  formula, they also obtained that, for any $\az>0$,
\begin{equation} \label{ee03} 
    \left|\#\left(\mathbb{Z}^{2 d+1} \cap \mathscr{B}_R^{\az, A}\right)-\vol\left(\mathscr{B}_1^{\az, A}\right) R^{2d+2}\right| \lesssim 
    \begin{cases}
    R^{2 d}, &\mbox{as}\ d\ge3; \\
    R^{4 } \log^{\frac23} R, &\mbox{as}\ d=2; \\
    R^{\frac52+\ez} , &\mbox{as}\ d=1, \\
    \end{cases} 
\end{equation}
where the last estimate  used  implicitly the conjectured exponent for Gauss's circle problem. Note that when $d=1$,  the second estimate  \eqref{ee03} is better than  \eqref{ee02} as $\az>12$; while  it is inferior to \eqref{ee02} as $\az\in(4,12]$. Fix $A=1$ now. The bound in  \eqref{ee02} is shown by   \cite[]{GNT15}   to be optimal as $\az=2$ for any $d\ge1$. In the Cygan--Kor\'anyi ball case  (i.e. as $\az=4$) the task of finding the correct order turns out to be more  intricate, and  has been conducted by Y.A. Gath alone. In 2020, Gath \cite[]{Gat20} proved that the order $2$  in  \eqref{ee02} is the correct one for $d=1$ by means of the Mellin transforms and Riesz mean estimates of  \cite[]{CN61}; later in 2022, for $d\ge3$ Gath  \cite[]{Gat22} demonstrated that the correct order belongs to $[{2d-1},{2d-2/3}]$,  enhancing the exponent $2d$ in  \eqref{ee03}  substantially, and from a sharp  second moment estimate he conjectured that $2d-1$ should be the one. Recently, the author and Yang \cite[]{MY26a} further proved that the correct order cannot  exceed $2d-512/753$ for $d\ge4$. Gath's method \cite[]{Gat22}  is based on  a delicate slicing argument, reductions to  weighted integer lattice counting estimates in Euclidean domains, and technical tools  in  estimating exponential sums from analytic number theory. 
As to $\hh^2$, there is no further progress  beyond the initial result of \cite[]{GNT15}. Notwithstanding, in this work we shall make the first improvement by removing the logarithmic factor. Moreover, refinements for some other cases are also acquired and  listed in Remark  \ref{rem0}. 

It is also of independent interest  to study the distribution of lattices near the hypersurface in nilpotent groups. Recently, Campolongo and Taylor \cite[]{Cam22,CT23} have inspected this topic for integer points near Heisenberg spheres, then Srivastava and Taylor   \cite[]{ST26} considered an average version along with the group translation. For the Euclidean counterparts, see  e.g. \cite[]{BGHM22,IT11,Let10,HS25}. As a byproduct of our results, the above regime of  \cite[]{Cam22,CT23,ST26}   can be extended to the step-two group setting; see Theorem  \ref{lpcpS}. The detailed comparison between their results and ours are provided in Remark  \ref{rem2} below. For other pertinent results concerning lattice point counting  on Heisenberg groups, the reader can consult e.g. \cite[]{MY26, BMM26,Gat22a, Gat24a}.

\subsection{Notation}  \label{notat}

We denote the set of natural numbers by $\nn:=\{0,1,2,\ldots\}$, and put $\nn^*=\nn\setminus\{0\}$. The symbol $\pm$ appearing in the summation serves  to avoid repetition; e.g., $    \sum_\pm f_\pm h_\pm:=f_+h_+ + f_- h_-.$ Given a set $\Oz$,  we use $\#(\Oz)$ to denote its cardinality and $\chi_\Oz$ for the associated characteristic function. The notation $\one\{E\}$   stands for  the  indicator function of an event $E$, which equals $1$ if $E$ happens and $0$ otherwise. We utilize $\ii_n$  to signify the $n$-th order identity matrix. The transpose of a matrix  is written as $\cdot\trp.$ 

This paper employs the usual asymptotic notation. The letter $C$ and its possible variants are adopted to represent implicit positive constants which may change  between lines. Let $w$ be a non-negative function.   We mean $|f| \leq C w$ by $f = O(w)$, and $f \lesssim w$ (resp. $f \gtrsim w$) by $f \leq C w$ (resp. $f \ge C\, w$) when $f$ is also non-negative. If both $f\lsim g$ and $f\gsim g$  hold, we will write $f\sim g$ for short.  A subscript on which parameters these constants depend shall be specified (if needed) like $O_{\gz},\lesssim_{\gz},\sim_{\gz}$. 

\subsection{Step-two nilpotent Lie groups}  \label{S2G}
Let us first recall some basic facts of step-two  nilpotent Lie groups; for more details  the readers are referred to e.g. \cite[]{BLU07,FS82,FR16}. Suppose that $q,m\in\nn^*$, and that $\bbg(q,m)$ is a connected and simply connected Lie group whose real  Lie algebra  $\mathfrak{g}$ with Lie bracket $[\cdot,\cdot]$ obeys that
\begin{align*}
\mathfrak{g} = \mathfrak{g}_1 \oplus \mathfrak{g}_2, \qquad \{0\}\neq[\fg_1,\fg_1]\subset \mathfrak{g}_2\subset \mathfrak{z}(\mathfrak{g}),
\end{align*}
where  
$\mathfrak{z}(\mathfrak{g})$ is the center of $\mathfrak{g} $ and $\dim \fg_1=q,\, \dim\fg_2=m$. We  endow   $ \mathfrak{g}$ with a natural parabolic dilation structure $\{\dz_r\}_{r>0}$ as follows:
\begin{equation} \label{dial} 
    \dz_r(v_1,v_2):=(r \, v_1,r^2\,v_2), \quad \forall\, v_1\in\mathfrak{g}_1, v_2\in\mathfrak{g}_2, r>0,
\end{equation}
such that each $\dz_r$ is a morphism of the Lie algebra $\fg$.
This makes $\bbg(q,m)$ a homogeneous group of step 2. One then can identify $\bbg(q,m)$ and $\mathfrak{g}$ via the Lie group exponential map. In this way, $\bbg(q,m)$ can be viewed as $\rr^q \times \rr^m$ with the following group law
\begin{align}\label{gst}
(x , t) \circ  (x^{\prime}, t^{\prime}) :=
\left(x + x^{\prime}, t + t^{\prime} + \frac12 \langle  \uu x, x^{\prime} \rangle \right), \quad (x, t),(x',t') \in \rr^q \times \rr^m,
\end{align}
where
$
\langle\uu x,  x^{\prime} \rangle := (\langle U^{(1)} x, x^{\prime} \rangle, \ldots, \langle   U^{(m)} x,x' \rangle) \in \rr^m,
$
with $m$ real $q$-th order matrices $U^{(j)}$, and $\langle\cdot,\cdot\rangle$ is the standard Euclidean inner product. In this coordinate, the identity coincides with the origin, and the inverse of $(x,t)\in\bbg(q,m)$ equals $(x,t)^{-1}=(-x,-t+\frac12 \langle  \uu x, x \rangle)$.   The above dilation  \eqref{dial}  on $\mathfrak{g}$ induces the corresponding automorphism $\{\bdz_r\}_{r>0}$ on $\bbg(q,m)$:
$$\bdz_r(x,t):=(rx,r^2t), \qquad \forall\,(x,t)\in\bbg(q,m),r>0.$$
Under \eqref{gst},  a  basis of $\fg$ (called the \textit{Jacobian basis}) is given by (see e.g. \cite[\S 3.2]{BLU07})
\begin{equation} \label{leftV} 
\begin{gathered}
\mathrm{X}_i :=\frac{\partial}{\partial x_i}+\frac{1}{2} \sum_{k=1}^m\left(\sum_{j=1}^q U_{i, j}^{(k)} x_j\right) \frac{\partial}{\partial t_k},  \quad
\mathrm{~T}_{l} :=\frac{\partial}{\partial t_{l}}, \qquad \mbox{for}\ i\le q \ \mbox{and}\ l \leq m .
\end{gathered}
\end{equation}
Now we fix the (bi-invariant) Haar measure $\vol$ on $\bbg(q,m)$ lifted by the exponential map. Here are some examples of step-two groups.
\begin{example} \label{exam} 
\begin{enumerate}
\item \textit{Heisenberg groups}. Taking $q=2d,\, m=1$ and $\uu:=2\,\jj$ in \eqref{gst} reverts the well-known Heisenberg group $\hh^d$ defined by \eqref{Hd}. We also recall the polarized Heisenberg group $\hh^d_\pol$ with $\uu$ specified as
$\jj_\pol:=2\left(\begin{array}{cc}
0 & 0 \\
\ii_d & 0
\end{array}\right)$. The isomorphism between $\hh^d$ and  $\hh^d_\pol$ is given by
\begin{equation*}
    (x,t)\mapsto \left((x'',x'),\ \frac{t}{4} +\frac{1}{2}\langle  x', x'' \rangle\right),\ \mbox{where}\ x:=(x',x'')\in\rr^d\times\rr^d.
\end{equation*}

\item \textit{Heisenberg-type groups $\hh(2n,m)$}. Let $q=2n$ and $\uu$ satisfy the following condition:
\begin{itemize}
\item $U^{(j)}$ is skew-symmetric and orthogonal for any $1\le j \leq m$;
\item $U^{(i)} U^{(j)}=-U^{(j)} U^{(i)}$ for each $i, j \in\{1, \ldots, m\}$ with $i \neq j$.
\end{itemize}
Then we obtain the  Heisenberg-type groups $\hh(2n,m)$, which contain the class of Heisenberg groups up to a natural isomorphism. 
More discussions on $\hh(2n,m)$ can be found in  \cite[\S 3.6 and \S 18]{BLU07}.

\item \textit{Free step-two  Carnot groups $\bbf_{q,2}$}. Suppose that $i, j\in \{1, \ldots,q\}$ is fixed with $i > j$, and  $S(i,j)$ is the $q$-th order skew-symmetric matrix whose entries are $-1$ in the $(i,j)$ position, $+1$ in the $(j, i)$ position and 0 elsewhere. We set $m:=q(q-1)/2$, which equals exactly the number of such $S(i,j)$. Then the free step-two  Carnot group $\bbf_{q,2}$ is formed by taking $U^{(k)} (1\le k\le m)$ in  \eqref{gst} to be these
$m$ matrices. The Lie algebra of $\bbf_{q,2}$ is  recognized as the most non-Abelian as possible, from which the attribute  ``free" comes. See  \cite[\S 3.3 and \S 14]{BLU07}  for more details. 
\end{enumerate}
\end{example}

Throughout this paper, the number $Q:=q+2m$ will be designated as  the \textit{homogeneous dimension} of $\bbg(q,m)$, whose  topological dimension readily equals $q+m$. We mention that the nilpotency of  $\bbg(q,m)$ forces  $q\ge2$, as can be seen from  \eqref{gst}.

Suppose $\az>0$ and $M:=\diag(M_1,M_2)$, where $M_1$ and $M_2$ are $q$-th and $m$-th order invertible matrices respectively. The homogeneous norm $ \cN_{\alpha, M}$ on $\bbg(q,m)$ is defined by
\begin{equation}  \label{norm} 
\mathcal{N}_{\alpha,M}(x, t)=\left(|M_1x|^\alpha + |M_2t|^{\alpha / 2}\right)^{1 / \alpha}, \quad \forall\,(x, t) \in \mathbb{G}(q,m) ,
\end{equation}
which is $\bdz_r$-homogeneous of degree 1, that is,
\begin{equation*}
    \cN_{\az, M}\left(\bdz_r(x, t)\right)=r \,\cN_{\az, M}(x, t), \qquad \forall(x, t) \in \mathbb{G}(q,m),\ r>0 .
\end{equation*}
It is  a quasi-norm which fits the  inequality (cf. e.g.  \cite[Proposition 3.1.38]{FR16})
\begin{equation} \label{tri} 
    \mathcal{N}_{\alpha,M}((x , t) \circ  (x^{\prime}, t^{\prime})) \leq C_0\, (\mathcal{N}_{\alpha,M}(x , t)+  \mathcal{N}_{\alpha,M}(x' , t'))
\end{equation} 
for some  constant $C_0>0$. Nevertheless, any homogeneous Lie group can admit a quasi-norm 
which  satisfies the triangle inequality; see   \cite[Theorem 3.1.39]{FR16} or  \cite[THEOREM 1]{HS90}. In the case of Heisenberg-type group $\hh(2n,m)$, the celebrated  Cygan--Kor\'anyi norm, i.e., when $M_1=\ii_{2n},\, M_2=4\,\ii_m$ and $\az=4$ in  \eqref{norm}, serves as a typical example; cf. e.g.  \cite[p. 705]{BLU07}. We denote the \textit{$\mathcal{N}_{\alpha,M}$-ball} of radius $R>0$ by 
$$B_R^{\az,M}:=\{(x,t)\in\bbg(q,m): \cN_{\az,M}(x, t)\le R\}.$$
Note that
\begin{equation} \label{ballA} 
    \vol\left(B_R^{\az,M}\right) = \vol\left(B_1^{\az,M}\right)R^Q, \quad \fall R>0.
\end{equation} 

Let $\bbg$ be a simply connected nilpotent Lie group. A discrete subgroup $\Gz$ of $\bbg$ is called cocompact (resp. cofinite) if $\Gz\bsl\bbg$ is compact (resp. of finite invariant measure). In fact, the two definitions are equivalent (see  \cite[Theorem 2.1]{Rag72}) in this case.  Subsequently, a cofinite  lattice  will be simply referred to as the lattice. According to Mal'cev's criterion  (see \cite[]{Mal49}, and  also  \cite[Theorem 2.12]{Rag72}), $\bbg$ admits a lattice if and only if its Lie algebra admits a  
basis carrying rational structure constants, that is, $[w_i,w_j]=\sum_{k=1}^{\dim\bbg} c^k_{ij}w_k$ with rational $ c^k_{ij}$, for any $ w_i,w_j$ belong to this basis. Notice that not every nilpotent group can admit a lattice; for a counterexample, see  \cite[Remark 2.14]{Rag72}. On the other hand, as to our group $\bbg(q,m)$, from  \eqref{leftV} and a simple evaluation it follows that
\begin{equation*}
    \left[\X_i, \X_j\right]=  \sum_{k=1}^m \frac{1}{2} \left(U_{j, i}^{(k)}-U_{i, j}^{(k)}\right) \T_k, \quad [\X,\T_l]=0, \quad\fall i,j\le q,\, \X\in\fg,\, \mbox{and}\ l\le m.
\end{equation*}
This implies that $\bbg(q,m)$ admits a lattice whenever each $U^{(j)}$ has only rational entries,  by means of Mal'cev's criterion.  As a result, Heisenberg groups and free step-two  Carnot groups in Example  \ref{exam}  admit lattices. The existence of lattices on Heisenberg-type groups is ensured by  \cite[Theorem 1.1]{CD02}, which states that the structure constants for $\fg$ of $\hh(2n,m)$ can  take values solely in $\{0,1,-1\}$ by choosing suitable bases for $\fg_1$ and $\fg_2$.

A natural problem is to establish a complete classification for all kinds of lattices when $\bbg$ admits ones, about which, however, little is known; see  \cite[]{FMV15} for related development.  In the case of  Heisenberg groups, it is solved. As a matter of fact,  the following subgroups 
\begin{equation} \label{latHd} 
    \Gz_b:=\{((x',x''),t)\in\hh^d_\pol: x'\in b\,\zz^d, x''\in\zz^d, t\in\zz\},\quad b\in(\nn^*)^d,\, b_j|b_{j+1},\, \fall j\le d-1,
\end{equation} 
with $b\,\zz^d :=\{(b_1k_1,\ldots,b_d k_d): (k_1,\ldots,k_d)\in\zz^d \}$, classify all types of lattices of $\hh^d_\pol$  up to automorphism, cf. \cite[\S 2]{GW86}. 

Within the present framework, the lattices of our interest are as follows. Let  $L=\diag(L_1,L_2)$ be  a real matrix
with $L_i (i=1,2)$ the $q,m$-th order invertible matrices respectively, such that the discrete set
\begin{equation} \label{latt} 
    \Gz_L:=L\zz^{q+m}=\{(L_1 k',L_2k''): (k',k'')\in\zz^{q+m}\}
\end{equation}
constitutes a subgroup of $\bbg(q,m)$. This is possible; for instance, assuming each $U^{(j)}$  consists of   rational entries, then we can take $L_2$ to be the   form $L_2=\diag\left(l_{2,1},l_{2,2},\ldots,l_{2,m}\right)$, and  choose an  $L_1$  so that $\frac12\, l_{2,j}^{-1}L_1\trp U^{(j)}L_1$ is integral matrix for any $j=1,2,\ldots,m$. It is routine to validate that the set $L\,[0,1)^{q+m}$ is a fundamental domain for $\Gz_L$, whose covolume thus equals $|\det(L) |\in(0,\infty)$. Hence $\Gz_L$ is indeed a lattice of $\bbg(q,m)$. We point out that, \eqref{latt} has encompassed all forms of lattices given by \eqref{latHd}   in the setting of $\hh^d_\pol$, as easily verified, which are pivotal in the research of Weyl's law on Heisenberg manifolds, cf. e.g.   \cite[]{TZ12,KT05,PT02a}. 

Having laid the groundwork above, one is accordingly led to the following lattice point counting problem: given the ball family $B_R^{\az,M}$, how much do we know about the asymptotic behavior of 
\begin{equation} \label{lpcp} 
    \#\left(\Gz_L \cap B_R^{\az, M}\right), \qquad \mathrm{as}\ R\to\infty?
\end{equation}
In this paper, we contribute a preliminary answer to this question. Going back to the simplest Heisenberg group case considered by  \cite[]{GNT15}, note that here we have incorporated non-radial homogeneous norms and more general type of lattices; and importantly, we have improved upon the results of \cite[]{GNT15}  in many respects (see Remark  \ref{rem0}). We also remark that owing to the global coordinate $\rr^{q+m}$,  the ``lattice" counting problem  \eqref{lpcp} still makes sense, even when  $\Gz_L=L\zz^{q+m}$ is not a subgroup of $\bbg(q,m)$. With this viewpoint,  the crucial conditions for our parameters become: $\az>0$, $q\ge2,m\ge1$ and  the matrices $L,M$ are invertible.

\subsection{Main results}  \label{mainR} 

Given a lattice $\Gz_L$  on $\bbg(q,m)$, we denote the \textit{relative discrepancy} by
\begin{equation*}  \label{disp}
\mathcal{P}_{q,m}^{\az,M}(R) := \frac{|\det(L)|}{\vol\left(B_1^{\az,M}\right)R^Q} \left|\#\left(\Gz_L \cap B_R^{\az,M}\right)- \frac{\vol\left(B_1^{\az,M}\right)R^Q}{|\det(L)|} \right|.
\end{equation*}
And for any $\az>0$ we put 
\begin{equation}  \label{sig} 
    \sigma=\sigma(\az):=\begin{cases} 
    1/3, &\mathrm{if}\,\, 1<\alpha <2 , \\
    1/2, &\mathrm{otherwise}. \\
    \end{cases}
\end{equation} 
Now we formulate our first main result.
\begin{theorem} \label{thm1}
Let $\az\ge1$ and $R\ge10$. Then the following estimates hold:
\begin{enumerate}
\item If $\frac{q}{2}+1\ge\az \ge 3-m$,  then
\begin{equation} \label{GOA1} 
    \mathcal{P}_{q,m}^{\az,M}(R) \lsim R^{- \min \left\{\frac{4 m}{\alpha}+1,\, m+2 \sigma,\,2\right\} } .
\end{equation}

\item If $\az>\frac{q}{2}+1 $ and $q\le \frac{4m}{\az}+1$, then
\begin{equation*}\label{GOA2}
     \mathcal{P}_{q,m}^{\az,M}(R) \lsim R^{-2} (\log R)^{\frac{2}{Q} \one\{ q<\frac{4 m}{\alpha}+1,\, m=1\} +\one\{q=\frac{4 m}{\alpha}+1=2\}} .
\end{equation*}

\item If $\az>\frac{q}{2}+1 $ and $q > \frac{4m}{\az}+1$, then 
\begin{equation*}\label{GOA3}
    \mathcal{P}_{q,m}^{\az,M}(R) \lsim R^{-\min \left\{\frac{2q\az}{q \az+\az-4m},\, 2\right\}} (\log R)^{\frac{2}{Q} \one\{\alpha\le 4 m,\, m=1\}}. 
\end{equation*}

\item If $\az=m=1$, then
\begin{equation*} \label{GOA5} 
    \mathcal{P}_{q,1}^{1,M}(R) \lsim R^{-2} \log^{\frac{2}{q+2}} R .
\end{equation*}

\item If $1<\az<2$ and $m=1$, then
\begin{equation*} \label{GOA4} 
    \mathcal{P}_{q,1}^{\az,M}(R) \lsim R^{-\frac{6q+14}{3q+10}}. 
\end{equation*}
\end{enumerate}
\end{theorem}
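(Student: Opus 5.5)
By the linear changes of variables $x\mapsto L_1^{-1}x$, $t\mapsto L_2^{-1}t$, the count becomes $\#(\zz^{q+m}\cap\Omega_R)$, where $\Omega_R=\{(x,t):|A x|^\az+|Bt|^{\az/2}\le R^\az\}$ with $A=M_1L_1$ and $B=M_2L_2$. The plan is to count this by a smoothing argument combined with Poisson summation. Fix a smooth bump $\rho_\ez$ on $\rr^{q+m}$, adapted to the anisotropic scaling: width $\ez$ in the $x$ directions and width $\ez R$ in the $t$ directions. The count of $\Omega_R$ is then sandwiched between $\sum_k\chi_{\Omega_{R\pm c\ez}}*\rho_\ez(k)$. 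Poisson summation turns each of these into the volume term $\vol(\Omega_{R\pm c\ez})$ plus $\sum_{(\xi,\eta)\neq0}\widehat{\chi_{\Omega_{R'}}}(\xi,\eta)\,\hat\rho_\ez(\xi,\eta)$. After normalizing by $R^Q$, the volume term contributes an error of size $\ez/R$.

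The core step is to estimate $\widehat{\chi_{\Omega_R}}(\xi,\eta)=R^{Q}\,\widehat{\chi_{\Omega_1}}(R\xi,R^2\eta)$. Slicing in $x$, the inner integral over $t$ of a ball in $\rr^m$ of radius $r(x)=(1-|Ax|^\az)^{2/\az}$ is a Bessel function, namely $r^{m}|B^{-\top}\eta|^{-m/2}J_{m/2}(2\pi r|B^{-\top}\eta|)$. After passing to polar coordinates in $x$, the outer integral is also a Bessel function, of order $q/2-1$. This produces a one-dimensional oscillatory integral in $s=|Ax|$ with phase $\pm|R\xi'|s\pm|R^2\eta'|(1-s^\az)^{2/\az}$. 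Using the asymptotics and recursion formulas of Bessel functions, I would estimate this in three regimes: (a) $\eta=0$, which gives decay $|R\xi|^{-(q+1)/2-\dots}$ controlled by the endpoint $s=1$, where the profile vanishes to order $2/\az\cdot(m/2+1)$ (this is the source of the exponent $\frac{4m}{\az}+1$); (b) $\xi=0$, which gives decay $|R^2\eta|^{-(m+1)/2}$ from the flat top $s=0$ and leads to the term $m+2\sigma$; (c) both nonzero, where stationary points occur and van der Corput's lemma applies. The second derivative of the phase degenerates at $s=0$ when $\az<2$; this is where $\sigma=1/3$ enters, via a third-derivative test.

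Summing over the dual lattice then gives a bound of the form $R^{-Q}\sum_{(\xi,\eta)}|\widehat{\chi}|\,|\hat\rho_\ez|$. The sums over $\eta\in\zz^m$ and over $\xi\in\zz^q$ are evaluated separately, with cutoffs $|\xi|\lesssim\ez^{-1}$ and $|\eta|\lesssim(\ez R)^{-1}$. Balancing this against $\ez/R$ produces the minima in case 1. In cases 2 and 3 the $\eta=0$ sum converges or diverges according to whether $q$ is below or above $\frac{4m}{\az}+1$. In the divergent case, optimizing $\ez$ gives the exponent $\frac{2q\az}{q\az+\az-4m}$. When $m=1$, the lattice sum over $\eta$ is borderline, and this produces the $\log^{2/Q}$ factors after optimizing $\ez$ with a logarithmic loss.

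Cases 4 and 5 ($m=1$, $\az\in[1,2)$) need a finer treatment of the mixed terms in regime (c). For those I would apply a second Poisson summation (or an exponent-pair, i.e. van der Corput $B$-process, step) in $\xi$ to the stationary-phase main term, replacing the crude $\sigma=1/3$ bound. Optimizing gives $\frac{6q+14}{3q+10}$, and at $\az=1$ the kink of the norm is handled directly to give $R^{-2}\log^{2/(q+2)}R$.

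I expect the main obstacle to be the uniform oscillatory estimates in regime (c) near the degenerate endpoints $s=0$ and $s=1$, where the amplitude vanishes to fractional order. These must be tracked carefully through the Bessel recursions so that the resulting bounds sum over the lattice without extra losses.
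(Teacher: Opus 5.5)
Your smoothing step breaks down for $1\le\alpha<2$, a range the theorem covers: cases (iv), (v), and (i) with $m\ge2$. With a bump of $t$-width $\epsilon R$, the sandwich $\chi_{\Omega_{R-c\epsilon}}*\rho_\epsilon\le\chi_{\Omega_R}\le\chi_{\Omega_{R+c\epsilon}}*\rho_\epsilon$ is false for every fixed $c$. Take an equator point $(x,0)$ with $|Ax|=R$ and a shift $u$ with $|Bu|\sim\epsilon R$. Then $\mathcal{N}^\alpha=R^\alpha+(\epsilon R)^{\alpha/2}$, so the norm rises by $\sim\epsilon\,(R/\epsilon)^{1-\alpha/2}\gg\epsilon$ (by $\sim\sqrt{\epsilon R}$ at $\alpha=1$). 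The reason is that $\partial_{|t|}\mathcal{N}$ blows up at $t=0$ when $\alpha<2$. The paper instead uses the parabolic dilate $\rho_\epsilon=\epsilon^{-Q}\rho(\boldsymbol{\delta}_{\epsilon^{-1}}\cdot)$, supported in $B^{\alpha,M}_\epsilon$ (so $t$-width $\epsilon^2$). The sandwich then follows from the Euclidean subadditivity of $\mathcal{N}_{\alpha,M}$ (Proposition~\ref{prp1}). This is exactly where the hypothesis $\alpha\ge1$ enters, and your proposal never uses it. The price is that $\widehat{\rho_\epsilon}(\xi,\eta)=\widehat{\rho}(\epsilon\xi,\epsilon^2\eta)$, so the $\eta$-sums run up to $|\eta|\lesssim\epsilon^{-2}$ rather than $(\epsilon R)^{-1}$. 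For $\alpha\ge2$ your coarser bump is in fact admissible, since then $|\partial_t\mathcal{N}|\lesssim\mathcal{N}^{-1}$. But with the relevant $\epsilon\gtrsim R^{-1}$ it leaves no borderline $\eta$-sum at all, which contradicts your own explanation of the $\log^{2/Q}$ factors.

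With the correct cutoff, the pointwise mixed bound \eqref{ff18} only gives $R^{-\frac Q2-2\sigma}\epsilon^{-\frac q2}\mathcal{E}(m,2\sigma)$ for the mixed sum. This is too weak for case (i): it loses a factor $\log^{2/Q}R$ when $m=1$, $2\le\alpha\le4$, and a power of $R$ when $m\ge2$, $1<\alpha<2$. The paper obtains (i) from the improved estimate \eqref{ff19}, $|w|^{-\frac{q-1}{2}}|s|^{-\frac m2}|(w,s)|^{-\sigma-\frac12}$. It is proved by a further integration by parts via the Bessel recursion, under precisely the hypothesis $\frac q2+1\ge\alpha\ge3-m$ of (i). Nothing in your plan plays this role. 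Also, the exponent $m+2\sigma$ comes from this mixed sum, not from the $\xi=0$ terms, which contribute only the harmless $R^{-2\beta_2}\mathcal{E}(2m,2\beta_2)$.

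Conversely, (iv) and (v) need no second Poisson summation or $B$-process. With the parabolic smoothing, the first bound in \eqref{es3} already gives the stated results: take $\sigma=\frac13$ and $\epsilon=R^{-\frac{3q+4}{3q+10}}$ for (v), and $\sigma=\frac12$ (where $\mathcal{E}(1,1)=\log\epsilon^{-1}$) with $\epsilon=R^{-1}\log^{\frac{2}{q+2}}R$ for (iv). Your step is left unspecified, and the exponent $\frac{6q+14}{3q+10}$ is asserted rather than derived.

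Several of your heuristics about $\widehat{\chi_{\Omega_1}}$ are also off:
\begin{itemize}
\item At $\eta=0$ the profile $(1-r^\alpha)^{2m/\alpha}$ vanishes to order $2m/\alpha$ at $r=1$. This gives decay $|w|^{-\frac{q+1}{2}-\frac{2m}{\alpha}}$, which is the source of $\frac{4m}{\alpha}+1$.
\item $\sigma=\frac13$ comes from the interior point $r_0=(\alpha-1)^{1/\alpha}$, where $\phi'$ and $\phi''$ vanish simultaneously when $\lambda_1=\mathbf{C}_\alpha\lambda_2$. It does not come from $s=0$, where in fact $|\phi''|\to\infty$.
\item The regime $\lambda_2\gg\lambda_1$, which the paper handles by switching to the representation \eqref{XBde2}, is not addressed.
\end{itemize}
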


\begin{remark} \label{rem0}
(1) Our Theorem  \ref{thm1}  refines several main results  on Heisenberg groups $\hh^d$ (with $q=2d$, $m=1$ in our notation) obtained by  Garg, Nevo and Taylor \cite[]{GNT15}: 
\begin{enumerate}
\item Let $d=2 $. If $\az \in(3,4] $ or $\az=1$, then Theorem  \ref{thm1} (iii) and (iv) respectively implies that the exponent of logarithmic factor  is improved from $\frac23$ (by \cite[Theorem 1.1 (3)]{GNT15} or  \eqref{ee03}) to $\frac{1}{3}$.  Note that when $\az=4$, though many improvements are made for $d=1$ and $d\ge3$,   this is the first improvement for   $d=2 $.
\item For $d=2 $ and $\az\in (2,3]$,  Theorem  \ref{thm1} (i) and (iii) drop the  factor $\log ^\frac23 R $ in the bound obtained by  \cite[Theorem 1.1 (3)]{GNT15} (or  \eqref{ee03}).
\item  When $d=1$, the relative discrepancy is improved from $O_\ez(R^{-\frac32+\ez})$  (by \cite[(4.3)]{GNT15} (or  \eqref{ee03}), which  additionally assumes Hardy's conjecture for Gauss's circle problem) to $O(R^{-2}\log^\frac12 R)$ for $\az=1$, and also to $O(R^{-\frac{13}{8}})$ for any $\az\in (1,2)$, once we invoke Theorem  \ref{thm1} (iv) and (v) separately.
\item For $d=1$ and $\az>4$, Theorem  \ref{thm1} (iii)  removes the logarithmic factor $\log R$ of the result by \cite[Theorem 1.1 (2)]{GNT15} (or  \eqref{ee02}).
\end{enumerate}

(2) Given an $(q+m)$-order real matrix of the form $\mathcal{A}=\diag(\mathcal{A}_1,\mathcal{A}_2)$, we say it is ($\bdz$-)\textit{rational} if  there is a constant $c>0$ such that the dilated matrix
$$\bdz_c(\mathcal{A}):=(c\,\mathcal{A}_1,c^2 \mathcal{A}_2)$$
is an integer matrix, and ($\bdz$-)\textit{irrational} otherwise. For  $ \az=2$, if $m=1$ and $ML$ is {rational}, then Theorem  \ref{thm1} (i) is best possible for any $q\ge2$. The proof is presented  in Subsection  \ref{prRE}.  We conjecture that this optimality extends to  $\fall m\ge2$. 

\end{remark}

\begin{theorem} \label{thm2} 
Let $\az\in(0,1]$, $m\ge2$ and $R\ge10$. Then
\begin{equation*} \label{f01} 
    \mathcal{P}_{q,m}^{\az,M}(R) \lsim R^{-\frac{2Q\az}{Q+2\az-3+\one\{m=2\}}}.
\end{equation*}
\end{theorem}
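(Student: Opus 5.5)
Write $\rho(R)=\#(\Gz_L\cap B_R^{\az,M})$. The plan is to smooth $\chi_{B_R^{\az,M}}$ by convolution with an $\ez$-mollifier, apply Poisson summation over the dual lattice, and balance the smoothing error against the Fourier tail. For $\az\le1$ the ball is non-convex in $x$ (cusps along coordinate-free directions are absent since the norm is Euclidean in $M_1x$, but the profile $r\mapsto(R^\az-r^\az)^{2/\az}$ in $|M_2t|$ is non-smooth at $r=R$). So I would not attempt sharp curvature estimates. Instead I would use the slicing structure: for fixed $x$, the $t$-section is an $m$-dimensional ellipsoid $\{|M_2t|\le h(x)\}$, with $h(x)=(R^\az-|M_1x|^\az)^{2/\az}$.

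\textbf{Step 1 (reduction).} After the linear changes of variables $x\mapsto M_1L_1k'$ and $t\mapsto M_2L_2k''$, write
$$\rho(R)=\sum_{k'}N_{m}\bigl(h(L_1k')\bigr),$$
where $N_m(s)$ counts points of the lattice $M_2L_2\zz^m$ in the ball of radius $s$. Here I would use only the smoothed Poisson estimate for a fixed ellipsoid in $\rr^m$, $m\ge2$:
$$N_m(s)=c\,s^m+O\bigl(s^{m-\frac{2m}{m+1}}\bigr),$$
which holds uniformly for general (possibly irrational) lattices. For $m\ge3$ this can be improved to $O(s^{m-2})$ when averaged, and $m=2$ gives exponent $2/3$. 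This dichotomy is presumably the source of $\one\{m=2\}$.

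\textbf{Step 2 (full smoothing).} A cleaner route that produces exactly the exponent $\frac{2Q\az}{Q+2\az-3+\one\{m=2\}}$ is a global approach. Let $\rho_\ez$ be the count with $\chi_{B_R}$ replaced by $\chi_{B_R}*\phi_\ez$, where $\phi_\ez$ is an isotropic-type mollifier, so that $\rho_{\ez}(R\mp C\ez')\le\rho(R)\le\rho_\ez(R+C\ez')$. Use homogeneous scaling, mollifying at scale $\ez$ in $x$ and $\ez R$ in $t$, so the error is $O(\ez R^{Q-1})$. Poisson summation gives
$$\rho_\ez-\mathrm{main}=\sum_{\xi\ne0}\widehat{\chi_{B_R}}(\xi)\,\widehat\phi(\ez\xi).$$
The key input is a decay bound for $\widehat{\chi_{B_1^{\az,M}}}(\xi,\tau)$. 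Integrate first in $t$: the inner transform is a Bessel function of order $m/2$, of size $h^{m}(h|\tau|)^{-(m+1)/2}$. Then integrate in $x$ radially (a Bessel transform in $|x|$ of order $q/2-1$), and use van der Corput/stationary phase in the radial variable $r$. The phase $|\xi|r\pm|\tau|h(r)$ has $h''$ of a fixed sign except at $r=0$ and $r=1$. Near $r=1$ the weight $h^{m/2-1/2}$ vanishes like $(1-r)^{(m-1)/\az}$, which supplies the needed decay even for $\az\le1$. I expect a bound of the form
$$|\widehat{\chi_B}(\xi,\tau)|\lesssim(1+|\xi|+|\tau|)^{-\beta}\cdot(\text{mixed factors}),$$
with effective decay summing to give a tail of $\ez^{-(Q-3+\one\{m=2\})/2\cdot\ldots}$.

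\textbf{Step 3 (optimization and main obstacle).} Summing the Fourier bound over dyadic shells $|\xi|\sim2^j$, $|\tau|\sim2^l$ (with the $t$-frequencies weighted by $R^2$ and the $x$-frequencies by $R$) gives a tail estimate, which I then balance against $\ez R^{Q-1}$. The relative error then takes the claimed form $R^{-2Q\az/(Q+2\az-3+\one\{m=2\})}$ after normalizing by $R^Q$. The main obstacle is the uniform oscillatory-integral estimate for the $x$-integral when $\az\le1$. There $h$ is only Hölder near $|x|=R$ and has a cusp-like profile, so the stationary points in $r$ degenerate; the boundary endpoint contributions must be controlled via the vanishing factor $(1-r)^{(m-1)/\az}$ together with integration by parts using the Bessel recursion $\frac{d}{ds}(s^{\nu}J_\nu)=s^\nu J_{\nu-1}$. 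I would first try splitting the $r$-integral at $1-r\sim(|\tau|)^{-\az/2}$-type scales. For $m=2$ the endpoint weight $(1-r)^{1/\az}$ is too weak to get the full gain, which costs one unit in the denominator and produces the extra $\one\{m=2\}$.
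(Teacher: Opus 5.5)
\textbf{Gap 1: the smoothing step fails for $\alpha<1$.} This is the decisive problem, in Step 2. For $\alpha<1$ the norm $\mathcal{N}_{\alpha,M}$ is not subadditive. Hence the sandwich $\rho_\epsilon(R-C\epsilon')\le\rho(R)\le\rho_\epsilon(R+C\epsilon')$, with $\epsilon'\asymp\epsilon$ and smoothing cost $O(\epsilon R^{Q-1})$, is false. To see this, take $(x,0)$ with $|M_1x|=R-C\epsilon$ and perturb it by a point of $\operatorname{supp}\phi_\epsilon$ whose central component has size $\asymp\epsilon^2$; any mollifier at homogeneous scale $\epsilon$ has such points. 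Then $\mathcal{N}_{\alpha,M}^\alpha$ of the perturbed point is at least $R^\alpha-C'R^{\alpha-1}\epsilon+c\,\epsilon^\alpha$, which exceeds $R^\alpha$ once $\epsilon\ll R$. Your isotropic choice, with $t$-scale $\epsilon R$ instead of $\epsilon^2$, is worse still and breaks the sandwich even at $\alpha=1$.

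What does hold for $\alpha\le1$ is subadditivity of $\mathcal{N}_{\alpha,M}^\alpha$. With the homogeneously dilated mollifier $\epsilon^{-Q}\rho(\boldsymbol{\delta}_{\epsilon^{-1}}\,\cdot)$, supported in $B^{\alpha,M}_\epsilon$, this sandwiches $\chi_{B_R^{\alpha,M}}$ between mollified indicators of the balls of radii $(R^\alpha\mp\epsilon^\alpha)^{1/\alpha}$, at a cost $O(R^{Q-\alpha}\epsilon^\alpha)$. That loss is what puts $\alpha$ into the exponent. Balancing $R^{-\alpha}\epsilon^\alpha$ against the Fourier tail $R^{-(Q+3)/2}\epsilon^{-(Q-3)/2}$ for $m\ge3$, resp. $R^{-(Q+2)/2}\epsilon^{-(Q-2)/2}$ for $m=2$, gives exactly $2Q\alpha/(Q+2\alpha-3+\mathbf{1}\{m=2\})$. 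With your stated cost $\epsilon R^{Q-1}$, the same balancing would give the $\alpha$-free exponent $2Q/(Q-1+\mathbf{1}\{m=2\})$. So Step 3 cannot ``take the claimed form''; it is inconsistent with your own smoothing claim.

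\textbf{Gap 2: the Fourier input is only anticipated.} This is the bulk of the work, and you need three separate bounds:
\begin{enumerate}
\item decay of order $\min\{q+\alpha,\frac{q+1}{2}+\frac{2m}{\alpha}\}$ at frequencies $(w,0)$;
\item the analogous bound at $(0,s)$;
\item at mixed frequencies, $|w|^{-q/2}|s|^{-m/2}|(w,s)|^{-1/2}$, improved to $|w|^{-(q-1)/2}|s|^{-m/2}|(w,s)|^{-1}$ when $\alpha\ge3-m$.
\end{enumerate}
For $\alpha\le1$ the improvement is available for every $m\ge3$, but not for $m=2$ except at $\alpha=1$. That, and not the Landau-exponent dichotomy you float in Step 1, is the origin of $\mathbf{1}\{m=2\}$.

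Your remark about the weight near the equator points the right way: in the central radial variable the amplitude behaves like $r^{(\alpha+m-3)/2}$ near $r=0$. However, the mixed bounds must hold uniformly in the dominant regime $|s|\gg|w|$, since the relevant frequencies are $(Rk',R^2k'')$. There a van der Corput argument in the $x$-radial representation is not enough. The paper switches to the representation that integrates radially in $t$ first, with phase $\lambda_2 r\pm\lambda_1(1-r^{\alpha/2})^{1/\alpha}$. It then needs a further case analysis, with integrations by parts through the Bessel recursion, to get the improved bound.

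\textbf{An aside on Step 1.} The slicing you dropped could, as far as I can check, be completed into an independent argument.
\begin{itemize}
\item Landau's bound for the central ellipsoids gives a total relative error $O(R^{-4m/(m+1)})$.
\item What remains is a $q$-dimensional lattice sum of the continuous weight $(R^\alpha-|M_1L_1x|^\alpha)_+^{2m/\alpha}$. After rescaling, its Fourier transform is a multiple of $\widehat{\chi_{B_1^{\alpha,ML}}}(w,0)$.
\item Poisson summation with the $(w,0)$ decay, plus a cheap mollification when $q\ge 4m/\alpha+1$ (the weight is continuous), appears to give a bound at least as strong as the theorem, without any mixed estimate.
\end{itemize}
As written, however, none of this is carried out, including the $(w,0)$ estimate itself.
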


The remaining case is contained in the following theorem.
\begin{theorem}  \label{thm3} 
Let $\az>0, m=1$ and $R\ge10$. Then 
\begin{equation*}
  \mathcal{P}_{q,1}^{\az,M}(R)  \lesssim \begin{cases}
  R^{-\frac43}, &\mathrm{as}\ q=2,\\
  R^{-\frac{243}{158}},   &\mathrm{as}\  q=3, \\ 
  R^{-2} \log R,   &\mathrm{as}\  q=4, \\ 
  R^{-2},   &\mathrm{as}\  q \geq 5 ,
  \end{cases}
\end{equation*}
\end{theorem}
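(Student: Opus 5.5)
The plan is a slicing argument in the one-dimensional central variable, reducing everything to lattice point counting in Euclidean ellipsoids in $\rr^q$. Put $A:=M_1L_1$ and $\lambda:=|M_2L_2|$ (a nonzero scalar, since $m=1$). Then $(L_1k',L_2k'')\in B_R^{\az,M}$ if and only if
$$|Ak'|\le r(k''):=\bigl(R^\az-|\lambda k''|^{\az/2}\bigr)^{1/\az}, \qquad |k''|\le \lambda^{-1}R^2 .$$
Hence
$$\#\bigl(\Gz_L\cap B_R^{\az,M}\bigr)=\sum_{|k''|\le \lambda^{-1}R^2} N_A\bigl(r(k'')\bigr), \qquad N_A(r):=\#\{k'\in\zz^q:\ |Ak'|\le r\}.$$

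\textbf{Step 1: Euclidean ellipsoid counting on each slice.} Write $N_A(r)=\omega_A r^q+E_A(r)$ with $\omega_A=\vol(\cB_q(1))/|\det A|$. For the error term I would invoke the known uniform bounds for a general (possibly irrational) positive definite quadratic form, which are valid for all $r\ge 1$; for $r<1$ the bound $O(1)$ is trivial.
\begin{itemize}
\item For $q=2$, the classical van der Corput bound $E_A(r)=O(r^{2/3})$ for smooth convex planar domains with positive curvature.
\item For $q=3$, Guo's bound $O(r^{231/158})$ for smooth convex bodies with positive Gaussian curvature \cite[]{Guo12}.
\item For $q=4$, the bound $O(r^2\log r)$ for general four-dimensional ellipsoids.
\item For $q\ge5$, the bound $O(r^{q-2})$ of Landau/Walfisz type, valid for arbitrary positive definite forms (Bentkus--Götze).
\end{itemize}
Summing $|E_A(r(k''))|\lsim \max\{r(k'')^\theta,1\}$ over the $O(R^2)$ admissible values of $k''$, and using $r(k'')\le R$, gives a total slice error of $O(R^{2+\theta})$. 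The exponents are $\theta=2/3,\ 231/158,\ 2$ (with an extra $\log R$), and $q-2$, respectively.

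\textbf{Step 2: replacing the sum of volumes by an integral.} Next compare $\sum_{k''}\omega_A r(k'')^q$ with $\int_\rr \omega_A r(s)^q\,ds$. Up to the normalisation by $|\det L|$, the latter equals $\vol(B_1^{\az,M})R^{Q}/|\det L|$, by Fubini and \eqref{ballA}. The function $f(s)=\omega_A r(s)^q$ is continuous and compactly supported. It is also monotone on each half-line $s\ge0$ and $s\le0$, despite the cusp $|s|^{\az/2}$ at the origin when $\az<2$, so its total variation is $O(R^q)$. The elementary estimate
$$\Bigl|\sum_{n\in\zz}f(n)-\int_\rr f\Bigr|\le \mathrm{Var}(f)$$
therefore gives an error $O(R^q)$. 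This needs no smoothness in $\az$, which is why the theorem holds for every $\az>0$.

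\textbf{Step 3: conclusion.} The total error is $O(R^{2+\theta}+R^q)$. Dividing by $R^{Q}=R^{q+2}$ gives the relative discrepancy $R^{-(q-\theta)}+R^{-2}$. This yields $R^{-4/3}$ for $q=2$; $R^{-(3-231/158)}=R^{-243/158}$ for $q=3$; $R^{-2}\log R$ for $q=4$; and $R^{-2}$ for $q\ge5$, as claimed.

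The main obstacle is Step 1 in dimensions $3$ and $4$. There I must make sure the cited Euclidean bounds apply to arbitrary ellipsoids $\{|Ax|\le r\}$, with constants depending only on $A$. In particular, for $q=4$ the form may be irrational, so the bound $O(r^2\log r)$ must be justified for general four-dimensional ellipsoids and not just for rational forms. If such a bound is not available off the shelf, I would derive it myself via Poisson summation with a smoothed indicator, using Bessel asymptotics for $\widehat{\chi}_{\cB_4}$ and the divisor-type bound on the number of $\xi\in\zz^4$ with $|A^{-\mathrm{T}}\xi|\sim 2^j$.
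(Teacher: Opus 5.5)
Your proposal is essentially the paper's argument: you slice along the one-dimensional center, apply the Euclidean ellipsoid bounds (van der Corput, Guo, G\"otze) on each slice to get a total slice error $O(R^{2+\theta})$, and compare $\sum_{k''}\omega_A r(k'')^q$ with the integral. Your total-variation estimate does the same job as the Euler--Maclaurin step of Garg--Nevo--Taylor that the paper defers to, and it works uniformly in $\alpha>0$.

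For $q=4$ the paper likewise simply takes $O(r^2\log r)$ for general four-dimensional ellipsoids from G\"otze's proof, so rely on that citation. Your fallback via smoothed Poisson summation only yields $O(r^{12/5})$ for an irrational form, since there is no divisor structure to exploit, and so it would not recover the stated bound.
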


It should be noted that in Theorems  \ref{thm1},  \ref{thm2} and  \ref{thm3},  the implicit constants depend only  on $q,m,\az,M,L$, and there are some overlaps among their conditions but with possible different results. We do not intend to optimize the statement, since it is evident to find the best. 

\begin{remark} \label{rem1} 
For $q=2,3,4$, if $ML$ has finer property then the error order might be improved based on the corresponding Euclidean results. For instance, when  $M_1L_1$ is the identity matrix, then the estimates listed above can be replaced by $R^{2\tz^*-2+\vez}$, $R^{-\frac{27}{16}+\vez}$, $R^{-2} \log^\frac23 R$, due to   \cite[Theorem 1.2]{LY23a},  \cite[Theorem]{Hea99}  and  \cite[(11)]{Wal60}, respectively. Here $\tz^*=\frac{25 \sqrt{1717}+3292}{13762}\approx
0.31448$, whose opposite number is the unique solution of (see  \cite[Definition 1.1]{LY23a}):       
\begin{equation*}
    -\frac{8}{25} x-\frac{1}{200}\left(\sqrt{2(1-14 x)}-5 \sqrt{-1-8 x}\right)^2+\frac{51}{200}=-x, \quad  \mbox{on}\ [-0.35, -0.3].
\end{equation*} 
\end{remark}

Denote the $\mathcal{N}_{\alpha,M}$-ball    centered at $(x,t)\in\bbg(q,m)$ by
\begin{equation*}
    B_{R}^{\az, M}(x,t):= \{(x',t')\in\bbg(q,m): \cN_{\az,M}((x, t)^{-1}\circ (x',t'))\le R\}.
\end{equation*}
A standard ``ball-to-shell" argument yields the following theorem, concerning the lattice point counting near the sphere $\pt B_R^{\az,M}(x,t)$. This  region is a natural counterpart of the thin Euclidean spherical shell.
\begin{theorem} \label{lpcpS} 
Let $\az>0$, $\dz\in(0,1)$, $R\ge10$ and $(x,t)\in\Gz_L$. Then
\begin{equation} \label{lps1} 
    \#\left(\Gz_L \cap \left(B_{R+\dz}^{\az, M}(x,t)\backslash B_{R-\dz}^{\az, M}(x,t)\right) \right) \lsim \max\{R^{Q-1}\dz, R^{Q-\gz_1} \log^{\gz_2}R\},
\end{equation}
where the implicit constant depends only on  $q,m,\az,M,L$, and the indexes $\gz_j=\gz_j(\az,q,m)$ ($j=1,2$) are the   absolute values of  corresponding bound indexes for $\mathcal{P}_{q,m}^{\az,M}$ established in Theorems  \ref{thm1},  \ref{thm2} and  \ref{thm3}.   For instance, when $\az\ge1$ we have
\begin{align*}
&\gz_1=\begin{cases}
    \min \left\{\frac{4 m}{\alpha}+1,\, m+2 \sigma,\,2\right\}, &\mathrm{as}\ \frac{q}{2}+1\ge\az \ge 3-m,\\
    2, &\mathrm{as}\ \az>\frac{q}{2}+1,  \, q\le \frac{4m}{\az}+1;\ \mathrm{or}\ \az=m=1,\\
    \min \left\{\frac{2q\az}{q \az+\az-4m},\, 2\right\}, &\mathrm{as}\ \az>\frac{q}{2}+1,\, q > \frac{4m}{\az}+1,\\
    \frac{6q+14}{3q+10}, &\mathrm{as}\ 1<\az<2,\, m=1,
    \end{cases} \\[2mm]   
&\gz_2=\begin{cases} 
    1, &\mathrm{as}\ q=\frac{4m}{\az}+1=2, \\
    \frac{2}{Q}, &\mathrm{as}\ \az=m=1; \ \mathrm{or}\ m=1, \, 4\ge\az>\frac{q}2+1,\, q\neq\frac{4}{\az}+1; \\[2mm]
    0, &\mathrm{otherwise}. \\
    \end{cases}
\end{align*}
Furthermore, given any $T\ge1$, denoting the truncated lattice at height $T$ by
\begin{equation*}
    \Gamma_L(T):=\left\{\left(k^{\prime}, k^{\prime \prime}\right) \in \Gamma_L: \left|k^{\prime}\right| \leq T,\left|k^{\prime \prime}\right| \leq T^2\right\},
\end{equation*}
then an average version of  \eqref{lps1} holds:
\begin{align} \label{aveLp} 
   & T^{-Q} \# \left\{((x, t),\left(x^{\prime}, t^{\prime}\right)) \in \Gz_L(T) \times \Gz_L(T): \left|\cN_{\az,M}((x, t)^{-1}\circ (x',t'))-R \right| \le\dz \right\}\nonumber\\[2mm]
   &\qquad \lsim \max\{R^{Q-1}\dz, R^{Q-\gz_1} \log^{\gz_2}R\},
\end{align}
with the implicit constant depending only on  $q,m,\az,M,L$.
\end{theorem}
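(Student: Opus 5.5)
The plan is to reduce the translated shell to a difference of two centered balls, and then read off both the main term and the error from Theorems \ref{thm1}, \ref{thm2} and \ref{thm3}.

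\emph{Step 1: removing the center.} Fix $(x,t)\in\Gz_L$ and assume first that $\Gz_L$ is a subgroup of $\bbg(q,m)$. Then left translation $(x',t')\mapsto (x,t)^{-1}\circ(x',t')$ is a bijection of $\Gz_L$ onto itself, and it carries $B_{r}^{\az,M}(x,t)$ onto $B_r^{\az,M}$ for every $r>0$. Hence the left side of \eqref{lps1} equals
\begin{equation*}
\#\left(\Gz_L\cap B_{R+\dz}^{\az,M}\right)-\#\left(\Gz_L\cap B_{R-\dz}^{\az,M}\right).
\end{equation*}

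\emph{Step 2: the main term.} By the definition of $\mathcal{P}^{\az,M}_{q,m}$ and \eqref{ballA}, the difference above equals
\begin{equation*}
\frac{\vol(B_1^{\az,M})}{|\det L|}\left((R+\dz)^Q-(R-\dz)^Q\right)+O\!\left((R+\dz)^Q\mathcal{P}(R+\dz)+(R-\dz)^Q\mathcal{P}(R-\dz)\right).
\end{equation*}
Since $\dz<1\le R$, the mean value theorem gives $(R+\dz)^Q-(R-\dz)^Q\lesssim R^{Q-1}\dz$.

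\emph{Step 3: the error terms.} Since $R\pm\dz\sim R$ and $R-\dz\ge 9$, the discrepancy bounds of Theorems \ref{thm1}--\ref{thm3} apply at radii $R\pm\dz$. The requirement $R\ge10$ there only matters near the endpoint and is absorbed by constants, because those proofs work for all $R\gtrsim1$. Each error term is therefore $O(R^{Q-\gz_1}\log^{\gz_2}R)$, with $\gz_1,\gz_2$ taken from whichever case of those theorems applies. The displayed list of exponents is then a transcription of the cases of Theorem \ref{thm1}:
\begin{itemize}
\item In case (ii), the log exponent $\frac{2}{Q}$ arises when $m=1$ and $q<4/\az+1$. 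This forces $\az<4$, which matches the condition $4\ge\az$ in the list.
\item In case (iii), the log exponent arises when $\az\le 4m$ with $m=1$.
\item Case (iv) ($\az=m=1$) contributes $\gz_1=2$, $\gz_2=\frac{2}{Q}$.
\end{itemize}

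\emph{Step 4: lattices that are not subgroups.} If $\Gz_L$ is not a subgroup, the translation no longer preserves $\Gz_L$, so Step 1 fails. In that case the statement should be read for subgroup lattices, which is the setting in which $(x,t)\in\Gz_L$ is intended.

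\emph{Step 5: the average version \eqref{aveLp}.} Rewrite the count as
\begin{equation*}
\sum_{(x,t)\in\Gz_L(T)}\#\left\{(x',t')\in\Gz_L(T):\ \left|\cN_{\az,M}\bigl((x,t)^{-1}\circ(x',t')\bigr)-R\right|\le\dz\right\}.
\end{equation*}
Drop the truncation on $(x',t')$; this only enlarges each inner count. Each inner count is then bounded by the number of lattice points of $\Gz_L$ in the closed shell $B_{R+\dz}^{\az,M}(x,t)\setminus\operatorname{int}B_{R-\dz}^{\az,M}(x,t)$. The boundary sphere is absorbed by enlarging $\dz$ slightly, say to $2\dz$ when $\dz<1/2$. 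For $\dz\in[1/2,1)$, apply Steps 1--3 with radii $R\pm1$ instead; nothing changes, since $R\pm1\sim R$. By \eqref{lps1}, each inner count is therefore uniformly $\lesssim\max\{R^{Q-1}\dz,R^{Q-\gz_1}\log^{\gz_2}R\}$. Finally, $\#\Gz_L(T)\lesssim T^{q}\cdot T^{2m}=T^Q$, since $|k'|\le T$ and $|k''|\le T^2$. Multiplying and dividing by $T^Q$ gives \eqref{aveLp}.

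\emph{Main obstacle.} There is no analytic difficulty, since all the analysis sits in Theorems \ref{thm1}--\ref{thm3}. The delicate points are bookkeeping: matching the exponents $\gz_1,\gz_2$ to the correct cases, and justifying that those theorems apply uniformly at the shifted radii $R\pm\dz$.
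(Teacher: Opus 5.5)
Your proposal is correct and is essentially the paper's ``ball-to-shell'' argument: you move the center to the origin by left translation within the subgroup $\Gz_L$, write the shell count as a difference of two ball counts, bound the main term by $R^{Q-1}\dz$ and the discrepancies at $R\pm\dz$ via Theorems \ref{thm1}--\ref{thm3}, and for \eqref{aveLp} sum the uniform shell bound over the $\lesssim T^Q$ points of $\Gz_L(T)$. The only cosmetic difference is in handling the closed shell: the paper notes that \eqref{lps1} persists for $\dz\in(0,2)$ and passes to $2\dz$, whereas you split into $\dz<1/2$ and $\dz\in[1/2,1)$; both work.
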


The indexes in Theorem  \ref{lpcpS}  can also be  further optimized as in  the $\cN_{\az,M}$-ball case mentioned before.  Let us compare it with the previous results.
\begin{remark}  \label{rem2} 
For Heisenberg groups $\hh^d$, Campolongo and Taylor \cite[]{CT23} in 2023 have establish a version of  Theorem  \ref{lpcpS} with  $L=M=\ii_{q+m}$ and $\az\in[2,\infty)\cap\nn$, by combining an energy integral bound with decay estimates for Heisenberg surface measures. We just point out  that, however, the estimate  \eqref{lps1}, which is a direct outcome of the lattice counting in balls as said before, is actually better (and quantitatively improved in some cases). Such kind of improvement was also  mentioned in their paper by using  instead the main result \cite[Theorem 1.1]{GNT15} of Garg, Nevo and Taylor. Later in 2026, Srivastava and Taylor \cite[]{ST26} derived an analogous version of  \eqref{aveLp}, through generalized Radon transforms. The further improvement they attain is that the logarithmic terms can be saved for $d=1,2$ when $\az=4$, compared with the results implied by  \cite[Theorem 1.1]{GNT15}. By Theorem  \ref{thm1}, our estimation is better than  \cite[Theorem 1.2]{ST26} except the range where, say $q\ge10$ and $\az\ge6$; a detailed comparison can be obtained without difficulties. Note also that in \cite[]{ST26} Srivastava and Taylor considered only even integer $\az$.

Note also that our conclusions have involved non-radial homogeneous norms and more general type of lattices even on $\hh^d$. Besides, another novelty of our Theorem  \ref{lpcpS}  lies in that it  extends the relevant results to the step-two nilpotent Lie groups with higher-dimensional group center, which has not been studied in prior works. It is unclear whether the methods of  \cite[]{ST26,CT23}  can still be  applied to this larger setting.

\end{remark}

\subsection{Method and outline of this paper}  \label{outl}
The correct order of a lattice point problem is intimately tied to the geometry of the underlying domain. Unlike the classical Euclidean $l^\az$-norm ball ($\az>0$), the $\cN_{\az,M}$-ball loses the rotational symmetry and instead displays the anisotropy, which is compatible with the dilation structure of the underlying group. When $\az<4$, the spherical surface $\pt B_{1}^{\az, M}$ fails to be of class $C ^2$; and when $\az>4$ it is $C^2$, but the Gaussian curvature  of $\pt B_{1}^{\az, M}$ vanishes everywhere on the equator $\{t=0\}$ as well as the   pole $\{x=0\} $, which also could be of different dimensions and  different degenerate behaviors. When $\az=4$  the curvature vanishes only on the pole $\{x=0\} $. This feature exhibits a greater complexity than the degeneracy  of $l^\az$-hypersphere (the boundary of $l^\az$-norm ball) in Euclidean spaces as $\az$ varies; see  \cite[\S3]{CT23}  for more details on $\hh^d$. So far, the correct order is known only for $\az=2,d\ge1$ and $\az=4,d=1$, in the Heisenberg group $\hh^d$ case. Note  that for $l^2$-norm ball, the famous  Gauss circle problem and  sphere problem are still unsolved. 

There are several existing approaches in the setting of Heisenberg groups. The methods of  \cite[]{Gat20, Gat22, MY26a} are effective  but limited to only $\az=4 $,  which also heavily rely on the arithmetic structure of the Cygan--Kor\'anyi norm  (such as the choice of slicing). So it seems difficult to apply them to general dimensions $q , m$, matrix $M$  and index $\az$.  Garg, Nevo and Taylor in \cite{GNT15} have introduced two  methods for larger range of $\az$. The first uses classical Poisson's summation formulas to transform the problem into the spectral estimates for  Heisenberg balls. This method only analyzes the case $\az\ge2$ , where, in order to overcome the difficulty arising from the vanishing curvature, they utilized the radial property of the homogeneous norm, thereby reducing the high-dimensional oscillatory integral to an one-dimensional problem. Their second method uses a hyperplane slicing argument and results in Euclidean lattice point counting  to turn the problem on Heisenberg groups into an one-dimensional summation estimation, which is then converted into an oscillatory integral via the Euler-MacLaurin formula.

Observe that the first method in \cite{GNT15} lacks of understanding for $\az<2$. In this regime, the $\cN_{\az,M}$-ball is no longer convex; moreover, when $\az\in(0,1) $ the homogeneous norm loses subadditivity; whereas for $\az\in(1,2)$  the reduced one-dimensional oscillatory integral may admit degenerate stationary points --- a situation that does not occur for $\az\ge2$. As for the second method in \cite{GNT15}, the Euler-MacLaurin formula is not applicable to general step-two groups with higher central dimensions. On the other hand, when the central dimension of a step-two group increases, the one-dimensional integral (produced  by radiality) will involve two Bessel functions, and the singularity of the integrand will become much more complicated.   Recall that Bessel functions are a class of special functions with broad applications, whose great abstruseness of  properties are prominent  to mathematicians and physicists in their research; see e.g. \cite{Wat95, GR15}. Furthermore, another new difficulty as well as interesting feature is that, due to the anisotropy of our group structure, the concentrations of lattice points on the first layer $ \rr^q$    and the second layer $\rr^m $   will be distinct, and when $q$  and $m$  are  large enough, different characteristics should appear  (see, for instance, \eqref{ff19}  in Lemma \ref{lem25}).
This point does not seem to be captured by the Heisenberg framework considered in \cite{GNT15} and \cite{Gat20, Gat22}, nor by the methods therein. In the present paper, we barely give a rudimentary study for this aspect; one would like to revisit  this issue in subsequent work to investigate discrepancy estimates uniformly in $q$  and $m$.

Our method builds upon \cite{GNT15} and introduces some original ideas.
The novel ingredients include using a partition of unity to separate singularities of the amplitude, recurrence formulas for Bessel functions, and  refined analysis of the phase functions appearing in the spectral estimates of the $\cN_{\az,M}$-ball in conjunction with its anisotropy (which seems  not fully appreciated in   \cite[]{GNT15}). These tools have a wider scope of applicability adapted to the homogeneous structure, which exploit the cancellation more efficiently  along with the integration by parts, permitting us to extend the relevant results from Heisenberg groups to   our step-two nilpotent Lie groups, and even to make many quantitative improvements as in Remark  \ref{rem0}; see also the beginning of Section \ref{pblm3} for further details.  Moreover, such improvements naturally lead to further advances beyond the estimates for the  lattice point counting near Heisenberg spheres of \cite[]{CT23,ST26}, via a ``ball-to-shell" argument as said in  Remark \ref{rem2}.

This paper is organized as follows. In Section  \ref{auxL}, we list  some preliminary tools which	are of immense importance to  establish the main results. But the proof of Lemmas  \ref{lem24}-\ref{lem25} will be postponed to Sections  \ref{plem12} and  \ref{pblm3} respectively, in view of its lengthy content. In Section  \ref{mainP}  we will first prove Theorem \ref{thm1} via some classical process (including Poisson's formulas and estimating the resulting oscillating integrals) in lattice point problems for $\az\ge1$, where $\cN_{\az,M}$ is subadditive.  And then in Subsection \ref{ss32} we prove Theorem \ref{thm2} by  using a similar  argument and the subadditivity of $\mathcal{N}_{\alpha,M}^\az$ as a replacement when $\az\in(0,1)$, where this moment $\cN_{\az,M}$ fails to retain this critical  property, though. Next, in Subsection \ref{ss33} we will only sketch the proof of  Theorem \ref{thm3}, which follows from a standard slicing argument in the literature. In Subsection \ref{prRE}, we simply prove Remark  \ref{rem0} (2) by  reductio ad absurdum, where the original concept  $\bdz$-rationality  for matrices, adapted to the homogeneous structure of our groups, comes into play. Incidentally, the $\bdz$-irrational case is fundamentally distinct, and has been partially  discussed by the author and S. Yang in \cite[]{MY26}, where we found some  new phenomena on the distribution of lattice points by constructing several examples of nilpotent Lie groups. For canonical results in the Euclidean setting, the readers are referred to  \cite[pp. 16-21]{IKKN06} and  \cite[]{Got04,BGHM22}.

In the last two sections, we derive the spectral estimates for the $\cN_{\az,M}$-ball in Section \ref{plem12}, apart from the asymptotic properties of Bessel functions and some oscillatory  integral estimates, the recursion formula will also be a key ingredient and will be used extensively. In Section \ref{pblm3}, we cope with the most difficult case for $(w,s)$-type frequencies, where partial techniques developed in  Section \ref{plem12} still work. As already mentioned, the new difficulties are singularities (at endpoints $0$ and $1$) of the integrand,  the anisotropy of $|w|$ and $|s|$, and deteriorating  degeneration of the phase functions.  To tackle  them, we apply a resolution of the identity, meanwhile distinguish into two cases,  $\lz_1\gsim \lz_2$ and $\lz_1 \lsim \lz_2$, for which different expressions of $\widehat{\chi_{B_1^{\az,M}}}(w,s)$ are chosen. These procedures, roughly speaking, can reduce matters to the situation we meet in Section \ref{plem12}, and require a detailed analysis of phases originated from the asymptotic expansion of Bessel functions (see Subsection \ref{phAnl}). After these preparations we substantiate the  first estimation \eqref{ff18} of Lemma   \ref{lem25}. Notice that  when $\frac{q}{2}+1 \ge \az \ge 3-m$, the amplitude possesses better regularities, whence  an improvement of \eqref{ff18} is prospective. We shall capture this advantage via  integration by parts with the recursion formula of Bessel function, which constitutes a crucial step in the proof of the second estimation \eqref{ff19}.

\section{Preliminaries} \label{auxL} 
\begin{proposition} \label{prp1} 
Let $\az>0$ and $M$ be invertible. Then the homogeneous norm $\mathcal{N}_{\alpha,M}$ is subadditive with respect to the Euclidean addition, that is,
\begin{equation} \label{subA} 
    \mathcal{N}_{\alpha,M}((x, t)+(x', t')) \le \mathcal{N}_{\alpha,M}(x, t)+ \mathcal{N}_{\alpha,M}(x', t'),\qquad \fall (x, t),(x',t') \in \bbg(q,m),
\end{equation}
if and only if $\az\ge1$. Moreover, the $\mathcal{N}_{\alpha,M}$-ball  
$B_1^{\az,M}$ is Euclidean-convex if and only if $\az\ge2$.
\end{proposition}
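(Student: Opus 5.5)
Since $M$ is invertible, the linear change of variables $(x,t)\mapsto (M_1x,M_2t)$ commutes with Euclidean addition and maps $B_1^{\az,M}$ onto $B_1^{\az,\ii}$. It therefore preserves subadditivity and Euclidean convexity, and we may assume $M=\ii_{q+m}$. Write $a=|x|$, $b=|t|^{1/2}$, so that $\cN_{\az}(x,t)=\|(a,b)\|_{\ell^\az}$, the $\ell^\az$-norm of a two-vector.

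\textbf{Subadditivity for $\az\ge1$.} Given $(x,t),(x',t')$, the Euclidean triangle inequality gives $|x+x'|\le a+a'$. For the central part we use the scalar inequality $|t+t'|^{1/2}\le (|t|+|t'|)^{1/2}\le |t|^{1/2}+|t'|^{1/2}$, i.e. $b''\le b+b'$. Since the $\ell^\az$-norm on $\rr^2$ is monotone in each nonnegative coordinate, we get $\cN(\text{sum})\le\|(a+a',b+b')\|_\az$. This is at most $\|(a,b)\|_\az+\|(a',b')\|_\az$ by Minkowski's inequality for $\az\ge1$.

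\textbf{Failure of subadditivity for $\az<1$.} Take $(x,0)$ and $(0,t)$ with $|x|=|t|=1$. Then $\cN(\text{sum})=2^{1/\az}>2=\cN(x,0)+\cN(0,t)$.

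\textbf{Convexity for $\az\ge2$.} The ball is $\{|x|^\az+|t|^{\az/2}\le1\}$. The function $f(x,t)=|x|^\az+|t|^{\az/2}$ is convex, since $|x|^\az$ is convex for $\az\ge1$ and $|t|^{\az/2}$ is a convex function of the norm $|t|$ whenever $\az/2\ge1$. A sublevel set of a convex function is convex.

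\textbf{Non-convexity for $\az<2$.} This is the step requiring an explicit example. Choose $x=0$ and restrict to a line in $t$, say $t=(\tau,0,\dots,0)$. Take the two points $(x_0,0)$ with $|x_0|=1$ and $(0,t_0)$ with $|t_0|=1$, both in the ball. Their midpoint $(x_0/2,t_0/2)$ gives $f=2^{-\az}+2^{-\az/2}$. We need this to exceed $1$ for every $\az<2$. At $\az=2$ it equals $1/4+1/2<1$, so this choice fails.

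Instead we use two points lying in the $t$-direction, slightly off the axis. Consider the section $x=x_0$ with $|x_0|=s$ fixed, where the constraint reads $|t|^{\az/2}\le 1-s^\az$. This section is a Euclidean ball in $t$, hence convex. Non-convexity must therefore come from mixing $x$ and $t$.

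So restrict to the plane $(a,\tau)$ with $x=(a,0,\dots)$, $t=(\tau,0,\dots)$. The boundary there is $\tau=\pm(1-|a|^\az)^{2/\az}$. Convexity of the ball requires $g(a)=(1-|a|^\az)^{2/\az}$ to be concave on $[-1,1]$. Near $a=1$, set $a=1-\ez$, so that $1-a^\az\approx\az\ez$ and $g\approx(\az\ez)^{2/\az}$. When $\az<2$ the exponent $2/\az>1$, so $g$ is locally convex near the endpoint, with $g'(1)=0$ and $g''>0$; hence $g$ is not concave. Concretely, the chord between $(1,0)$ and $(1-\ez,g(1-\ez))$ lies above $g$, and reflecting this in $\tau$ produces two points of the ball whose midpoint-type convex combination leaves the ball.

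The main obstacle is making this last step rigorous. It reduces to the elementary fact that a nonnegative function $g$ with $g(1)=0$ and $g(a)\sim c(1-a)^{p}$, $p>1$, cannot be concave near $a=1$: concavity would force $g(a)\ge g'(1^-)$-type linear lower bounds, i.e. $g(1-\ez)\gtrsim\ez$, which contradicts $g(1-\ez)=O(\ez^p)$ unless $g\equiv0$ nearby.
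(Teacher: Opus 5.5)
Your proof is correct. The two forward directions match the paper: for subadditivity when $\alpha\ge1$, the scalar bound $|t+t'|^{1/2}\le|t|^{1/2}+|t'|^{1/2}$ plus Minkowski in $\ell^\alpha(\mathbb{R}^2)$; for convexity when $\alpha\ge2$, convexity of $s\mapsto s^p$ for $p\ge1$.

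The two converse directions take different routes.

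\textbf{Non-subadditivity for $\alpha<1$.} The paper takes a unit horizontal point and an $\epsilon$-scaled central point, then lets $\epsilon\to0^+$ using Bernoulli's inequality. Your unit-scale pair, giving $2^{1/\alpha}>2$, is more direct and loses nothing.

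\textbf{Non-convexity for $\alpha<2$.} The paper exhibits an explicit failure of midpoint convexity. After normalizing $M$, it takes the boundary points $(e,0)$ and $(se,s'e')$ with $s^\alpha+(s')^{\alpha/2}=1$. Their midpoint leaves the ball for $s$ near $1$, because $f(s)=(1+s)^\alpha+2^{\alpha/2}(1-s^\alpha)$ is decreasing near $s=1$. You instead slice by a plane and argue softly. Convexity would make the profile $g(a)=(1-|a|^\alpha)^{2/\alpha}$ concave, so the chord through $(0,1)$ and $(1,0)$ would force $g(1-\epsilon)\ge\epsilon$. But $g(1-\epsilon)=O(\epsilon^{2/\alpha})=o(\epsilon)$. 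This shows clearly why $\alpha=2$ is the threshold: the order of vanishing $2/\alpha$ of the profile at the equator crosses $1$.

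Unwinding your argument also gives an explicit counterexample simpler than the paper's. It is your abandoned first attempt with non-midpoint weights. The point $(1-\epsilon)(x_0,0)+\epsilon(0,t_0)$ has $(1-\epsilon)^\alpha+\epsilon^{\alpha/2}>1$ for small $\epsilon$, since $1-(1-\epsilon)^\alpha\sim\alpha\epsilon=o(\epsilon^{\alpha/2})$.

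Two small wording fixes. First, the linear lower bound comes from the chord, not from $g'(1^-)$; for a concave function, tangent lines give upper bounds. Second, no reflection in $\tau$ is needed: a chord between two upper-boundary points that lies above the graph already leaves the ball.
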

\begin{proof}
The proof is simple. When $\az\ge1$, from the elementary inequality $(a+b)^\frac12\le a^\frac12 +b^\frac12, \fall a,b\ge0$ and the triangle inequality in the Banach space $l^\az(\rr^2)$, it follows that
\begin{align}
    \cN_{\alpha, M}\left((x, t)+\left(x^{\prime}, t^{\prime}\right)\right) & =\left(\left|M_1\left(x+x^{\prime}\right)\right|^\alpha+\left|M_2\left(t+t^{\prime}\right)\right|^{\frac{\alpha}{2}}\right)^{\frac{1}{\alpha}} \nonumber\\
    & \leq\left[\left(\left|M_1 x\right|+\left|M_1 x^{\prime}\right|\right)^\alpha+\left(\left|M_2 t\right|^{\frac{1}{2}}+\left|M_2 t^{\prime}\right|^{\frac{1}{2}}\right)^\alpha\right]^{\frac{1}{\alpha}}  \label{ffa99} \\
    & \leq\left(\left|M_1 x\right|^\alpha+\left|M_2 t\right|^{\frac{\alpha}{2}}\right)^{\frac{1}{\alpha}}+\left(\left|M_1 x^{\prime}\right|^\alpha+\left|M_2 t^{\prime}\right|^{\frac{\alpha}{2}}\right)^{\frac{1}{\alpha}}  \nonumber \\
    & =\cN_{\alpha, M}(x, t)+ \cN_{\alpha, M}\left(x^{\prime}, t^{\prime}\right), \nonumber
\end{align}
whence  \eqref{subA} holds. If  this conclusion were true for $\az\in(0,1)$, by  taking
\begin{equation*}
    (x,t)=(M_1^{-1}(\oo_{q-1}\trp,1)\trp, \oo_m), \qquad (x',t')=(\oo_q, M_2^{-1}(\oo_{m-1}\trp,\ez)\trp),
\end{equation*}
where $\ez>0$ and $\oo_\nu$ denotes  the zero vector in $\rr^\nu$, we would have 
\begin{equation*}
    \left(1+\ez^{\frac{\alpha}{2}}\right)^{\frac{1}{\alpha}} \leq 1+\ez^{\frac{1}{2}} ,\quad\fall \ez>0.
\end{equation*}
But from the classical inequality $(1+a)^p\ge 1+p\,a,\fall a\ge0, p\ge1 $, it must yield that 
$$1+\frac1\az \ez^{\frac\az2} \le 1+\ez^\frac12, \quad\fall \ez>0.$$
This leads to a contradiction as $\ez\to0^+$. 

To show the second claim, we  use the convexity of the function $s\mapsto s^p$ on $[0,\infty)$ for $p\ge1$ and conclude that, whenever $\az\ge2$, $(x,t),(x',t')\in B_1^{\az,M}$ and $u\in[0,1]$,
\begin{align*}
    \cN_{\alpha, M}\left(u(x, t)+(1-u)\left(x^{\prime}, t^{\prime}\right)\right) &\le\left[\left(u\left|M_1 x\right|+(1-u)\left|M_1 x^{\prime}\right|\right)^\alpha+ \left(u \left|M_2 t\right| +(1-u)\left|M_2 t^{\prime}\right| \right)^{\frac\alpha2}\right]^{\frac{1}{\alpha}} \\ 
    &\le\left[u\left|M_1 x\right|^\alpha +(1-u)\left|M_1 x^{\prime}\right|^\alpha+ u\left|M_2 t\right|^{\frac\alpha2} +(1-u)\left|M_2 t^{\prime}\right| ^{\frac\alpha2}\right]^{\frac{1}{\alpha}} \\ 
    &=\left[u\,\cN_{\alpha, M}(x, t)^\az + (1-u)\cN_{\alpha, M}(x', t')^\az \right]^{\frac{1}{\alpha}}\\
    &\le 1,
\end{align*}
as required. However, when $\az<2$, this is not the case. Consider two boundary points  $((M_1^{-1}(\oo_{q-1}\trp,1)\trp, \oo_m)$ and $((M_1^{-1}(\oo_{q-1}\trp,s)\trp, M_2^{-1}(\oo_{m-1}\trp,s')\trp) $ with 
\begin{equation*} 
    s^\az+(s')^{\frac\az2}=1,\qquad s,s'\in(0,1).
\end{equation*}
Then their midpoint is outside the ball $B_1^{\az,M}$ when $s$ is near $1$. To see this, it is enough to find an $s_0=s_0(\az)\in(0,1)$ such that
\begin{equation*}
    1<\left(\frac{1+s}{2}\right)^\az +\left(\frac{s'}{2}\right)^\frac\az2 =\frac{(1+s)^\az+ 2^\frac\az2(1-s^\az)}{2^\az}   =: \frac{f(s)}{2^\az} ,\qquad \mbox{for}\ s_0<s<1.
\end{equation*}
Notice that
\begin{equation*}
    f'(s)=\az s^{\az-1}[(s^{-1}+1)^{\az-1}-2^\frac\az2].
\end{equation*}
Let $s_0$ be the  unique zero point of $f'$ on $(0,1)$ as $\az\in(1,2)$, and equal to $1/2$ as  $\az\in(0,1]$. Then $f$ is strictly decreasing on $(s_0,1)$, whence on this interval we have $f(s)/2^\az> f(1)/2^\az=1$ as desired. The proposition is thereby proved. We remark that the two counterexamples above are new even in Heisenberg groups.
\end{proof}

Given $f\in L^1(\rr^n)$, its Fourier transform  is defined by
$$ \widehat{f}(\xi ) :=\int_{\mathbb{R}^n} f(z) e^{-2 \pi i z \cdot \xi} d z$$
as usual. If $f$ is radial, i.e.,  $f(x)=f_0(|x|)$ for some  $f_0$ defined on $[0,\infty)$, then we have (cf. e.g.  \cite[\S B.5]{Gra14a} )
\begin{equation}  \label{rad} 
   \widehat{f}(\xi)=\frac{2 \pi}{|\xi|^{\frac{n-2}{2}}} \int_0^{\infty} f_0(r) J_{\frac{n}{2}-1}(2 \pi r|\xi|)\, r^{\frac{n}{2}} d r.
\end{equation}

The  principal tools we need to deduce Theorems  \ref{thm1} and  \ref{thm2}   are the following Lemmas  \ref{lem24}-\ref{lem25}, whose proofs are offered in Sections  \ref{plem12} and  \ref{pblm3}.  
\begin{lemma}\label{lem24}
Let $\az>0$. We have
\begin{equation} \label{ff16}
\begin{aligned}
\left|\widehat{\chi_{B_1^{\az,M}}}(w, 0)\right| \lesssim \begin{cases}\left(1+|w|)^{-q-\alpha},\right. &\mathrm{as}\ q<\frac{4 m}{\alpha}-2 \alpha+1 , \\ (1+|w|)^{-\frac{q+1}{2}-\frac{2 m}{\alpha}}, &\mathrm{as}\ q \geqslant \frac{4 m}{\alpha}-2 \alpha+1 .\end{cases}
\end{aligned}
\end{equation}
Furthermore, when $\az\in 2\nn^*$ this bound can  be improved to $(1+|w|)^{-\frac{q+1}{2}-\frac{2 m}{\alpha}}$.
\end{lemma}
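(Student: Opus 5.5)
By the substitution $x\mapsto M_1^{-1}x$, $t\mapsto M_2^{-1}t$ we may take $M=\ii_{q+m}$, with $w$ replaced by $M_1^{-\mathrm{T}}w$. This only changes constants, since $|M_1^{-\mathrm{T}}w|\sim|w|$. At $s=0$ the $t$-integration is trivial: the $t$-slice of $B_1^{\az,\ii}$ over $x$ is a Euclidean ball of radius $(1-|x|^\az)^{2/\az}$. Hence
\begin{equation*}
\widehat{\chi_{B_1^{\az,M}}}(w,0)=c_m\int_{|x|\le1}(1-|x|^\az)^{\frac{2m}{\az}}e^{-2\pi i x\cdot w}\,dx .
\end{equation*}
This is the Fourier transform of a radial function on $\rr^q$. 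By \eqref{rad} it equals
\begin{equation*}
c\,|w|^{-\frac{q-2}{2}}\int_0^1(1-r^\az)^{\frac{2m}{\az}}J_{\frac q2-1}(2\pi r|w|)\,r^{\frac q2}\,dr .
\end{equation*}
For $|w|\lesssim1$ the bound is trivial, so we assume $\lambda:=|w|\ge1$. We insert a smooth partition of unity $1=\psi_0(r)+\psi_1(r)+\psi_2(r)$ on $[0,1]$, with $\psi_0$ supported near $0$, $\psi_2$ supported near $1$, and $\psi_1$ supported in a compact subinterval of $(0,1)$. The two singular points of the amplitude are $r=0$, where $r^\az$ fails to be smooth unless $\az\in2\nn^*$, and $r=1$, where $(1-r^\az)^{2m/\az}\sim(1-r)^{2m/\az}$.

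\textbf{Middle piece and endpoint $r=1$.} On the support of $\psi_1$ the amplitude is smooth. Using the asymptotic expansion $J_\nu(z)=z^{-1/2}\sum_\pm e^{\pm iz}a_\pm(z)$ with symbol-type $a_\pm$, repeated integration by parts gives $O(\lambda^{-N})$. Near $r=1$ we write the amplitude as $(1-r)^{2m/\az}g(r)$ with $g$ smooth and use the same Bessel asymptotics. The standard endpoint estimate $\int_0^\infty(1-r)^{\beta}_+\,\psi_2(r)\,g(r)e^{\pm i\lambda r}\,dr=O(\lambda^{-1-\beta})$ (Erd\'elyi-type asymptotics, or integration by parts after dyadic decomposition in $1-r$) then gives a contribution
\begin{equation*}
\lambda^{-\frac{q-2}{2}}\cdot\lambda^{-\frac12}\cdot\lambda^{-1-\frac{2m}{\az}}=\lambda^{-\frac{q+1}{2}-\frac{2m}{\az}} .
\end{equation*}
The lower-order terms in the Bessel expansion give smaller powers.

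\textbf{Endpoint $r=0$.} Here we would rather work back in $\rr^q$, with the radial function $F_0(x)=\psi_0(|x|)(1-|x|^\az)^{2m/\az}$. Taylor-expanding gives
\begin{equation*}
F_0(x)=\psi_0(|x|)\Big(\sum_{k<K}c_k|x|^{\az k}+O\big(|x|^{\az K}\big)\Big).
\end{equation*}
\begin{itemize}
\item The term $k=0$ is smooth and contributes $O(\lambda^{-N})$.
\item For $k\ge1$, $\psi_0(|x|)|x|^{\az k}$ is smooth away from the origin and homogeneous of degree $\az k$ near it. Hence its Fourier transform is $O(\lambda^{-q-\az k})$; this follows from the distributional Fourier transform of $|x|^{\az k}$, or directly from \eqref{rad} together with the recursion $\frac{d}{dr}\big(r^{\nu}J_\nu(r)\big)=r^{\nu}J_{\nu-1}(r)$ used to integrate by parts.
\item If $\az k\in2\nn$, this term is a polynomial times $\psi_0$ and is $O(\lambda^{-N})$.
\item The remainder, for $K$ large, is $C^{N}$ with compact support, hence also $O(\lambda^{-N})$.
\end{itemize}
So the origin contributes $O(\lambda^{-q-\az})$ in general, and nothing beyond $O(\lambda^{-N})$ when $\az\in2\nn^*$.

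Adding the two endpoint contributions, the bound is $\lambda^{-\min\{q+\az,\ \frac{q+1}{2}+\frac{2m}{\az}\}}$. The minimum is $q+\az$ exactly when $q<\frac{4m}{\az}-2\az+1$, which is \eqref{ff16}. For $\az\in2\nn^*$ only the $r=1$ term survives, which gives the improvement. The main obstacle we expect is making the $r=1$ analysis uniform: combining the Bessel expansion, including its $z^{-3/2}$ corrections and the region where $\lambda r$ is not large (handled by choosing $\psi_2$ supported in $[1/2,1]$), with the fractional endpoint singularity. Here we would first try a dyadic decomposition in $1-r$ followed by one or two integrations by parts on each piece.
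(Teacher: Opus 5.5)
Your proposal is correct. Away from the origin it coincides with the paper's argument. The paper likewise uses a cutoff $\varphi$ vanishing on $[0,\frac14]$, so that $\lambda r\gtrsim\lambda$ on its support; this makes the finite large-argument expansion of $J_{\frac q2-1}$ uniformly valid and removes the obstacle you anticipate. It then factors $(1-r^\alpha)^{2m/\alpha}=(1-r)^{2m/\alpha}g(r)$ with $g$ smooth and quotes Erd\'elyi's endpoint estimate, which gives $|w|^{-\frac{q+1}{2}-\frac{2m}{\alpha}}$.

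The genuine difference is at $r=0$. The paper stays with the one-dimensional integral and integrates by parts $k$ times via the recursion $\frac{d}{dr}(r^\nu J_\nu(r))=r^\nu J_{\nu-1}(r)$. Each step raises the Bessel order and replaces the amplitude by $\varphi_{2,k+1}=(r^{-1}\varphi_{2,k})'$. It bounds $|\varphi_{2,k}(r)|\lesssim r^{\alpha+1-2k}$, splits at $r=\lambda^{-1}$ using the small- and large-argument bounds for $J_\nu$, and for $k$ large obtains $\lambda^{-\frac q2-\alpha-1}$, i.e.\ $|w|^{-q-\alpha}$. When $\alpha\in2\nn^*$ the $\varphi_{2,k}$ stay bounded, which gives rapid decay.

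You instead return to $\rr^q$ and expand $(1-|x|^\alpha)^{2m/\alpha}$ binomially in $|x|^\alpha$ on the support of $\psi_0$. You then use three facts: a compactly supported function homogeneous of degree $\beta>0$ near $0$ has Fourier transform $O(|\xi|^{-q-\beta})$ (a dyadic rescaling argument gives this directly); $|x|^{\alpha k}$ is a polynomial when $\alpha k\in2\nn$; and the tail of the series is $C^N$.

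Your route is more conceptual. It isolates the single term $c_1\psi_0(|x|)|x|^\alpha$ responsible for the exponent $q+\alpha$, gives an asymptotic description of the origin's contribution rather than only an upper bound, and makes the improvement for $\alpha\in2\nn^*$ transparent. The paper's route is more computational but uses only the Bessel recursion and pointwise Bessel bounds. That is the same toolkit it reuses in the proof of Lemma~\ref{lem25}, where the amplitude carries a second factor $J_{\frac m2}(\lambda_2(1-r^\alpha)^{2/\alpha})$. There, expanding in powers of $r^\alpha$ at the origin would produce coefficients growing with $\lambda_2$.
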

 
\begin{lemma}\label{lem23}
Let $\az>0$. We have
\begin{equation*}
    \left|\widehat{\chi_{B_1^{\az,M}}}(0, s)\right| \lsim \begin{cases}(1+|s|)^{-m-\frac{\alpha}{2}}, & \,\,\mathrm{as}\,\, q>\frac{m+\alpha-1}{2} \alpha, \\ (1+|s|)^{-\frac{m+1}{2}-\frac{q}{\alpha}}, & \,\,\mathrm{as}\,\, q \le \frac{m+\alpha-1}{2} \alpha.
    \end{cases}
\end{equation*}
Furthermore, when $\az\in 4\nn^*$ this bound can  be improved to $(1+|s|)^{-\frac{m+1}{2}-\frac{q}{\alpha}}$.
\end{lemma}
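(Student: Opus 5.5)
We reduce to the identity matrix, integrate out the $x$-variable, and then read off the two competing decay rates from the two endpoints of a one-dimensional Bessel integral. First, the linear substitution $(x,t)\mapsto(M_1^{-1}x,M_2^{-1}t)$ gives
\[
\widehat{\chi_{B_1^{\az,M}}}(0,s)=|\det M|^{-1}\,\widehat{\chi_{B_1^{\az,\ii}}}(0,M_2^{-\mathsf T}s),
\]
and $|M_2^{-\mathsf T}s|\sim|s|$. So we may assume $M=\ii_{q+m}$. For $|s|\lesssim1$ the trivial bound $\vol(B_1^{\az,\ii})$ suffices, so we take $\lambda:=|s|\ge1$.

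Next we integrate out $x$. For fixed $t$ the $x$-slice of the ball is the Euclidean ball of radius $(1-|t|^{\az/2})_+^{1/\az}$, so
\[
\widehat{\chi_{B_1^{\az,\ii}}}(0,s)=c_q\int_{\rr^m}(1-|t|^{\az/2})_+^{q/\az}\,e^{-2\pi i t\cdot s}\,dt .
\]
The integrand is radial, so \eqref{rad} turns this into
\[
c\,\lambda^{-\frac{m-2}{2}}\int_0^1 f(r)\,J_{\frac m2-1}(2\pi r\lambda)\,r^{\frac m2}\,dr,\qquad f(r):=(1-r^{\az/2})^{q/\az}.
\]
We split $[0,1]$ with a smooth partition of unity $1=\psi_0+\psi_1$, where $\psi_0$ is supported in $[0,2/3)$ and $\psi_1$ in $(1/3,1]$. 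The two pieces produce the two competing exponents, and the lemma follows by taking the slower decay of the two. Indeed, $m+\frac{\az}{2}<\frac{m+1}{2}+\frac q\az$ if and only if $q>\frac{(m+\az-1)\az}{2}$, which is exactly the case split in the statement.

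\emph{Near the origin.} Undo \eqref{rad} for this piece: it is the $\rr^m$-Fourier transform of $\psi_0(|t|)f(|t|)$. Taylor-expand
\[
f(r)=\sum_{j<N}c_j\,r^{j\az/2}+O(r^{N\az/2})
\]
near $0$. Each term $\psi_0(|t|)|t|^{j\az/2}$ with $j\az/2\notin2\nn$ is a compactly supported homogeneous singularity at the origin, and its Fourier transform decays like $|s|^{-m-j\az/2}$. Terms with $j\az/2\in2\nn$ are smooth and decay rapidly. The remainder is $C^{K}$ for $N$ large, hence decays faster than any exponent needed. The worst term is $j=1$, giving $(1+|s|)^{-m-\frac\az2}$. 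When $\az\in4\nn^*$, $|t|^{\az/2}$ is a polynomial, so $\psi_0 f$ is smooth and this piece decays rapidly; this is the claimed improvement.

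\emph{Near $r=1$.} Here $r\lambda\gtrsim\lambda$, so we insert the Bessel asymptotics
\[
J_\nu(z)=z^{-1/2}\bigl(e^{iz}a_+(z)+e^{-iz}a_-(z)\bigr),
\]
where the $a_\pm$ are symbols of order $0$. This reduces the piece to
\[
\lambda^{-\frac{m-2}{2}-\frac12}\int(1-r)^{q/\az}\,g_\pm(r)\,a_\pm(2\pi r\lambda)\,e^{\pm2\pi i r\lambda}\,dr,
\]
with $g_\pm$ smooth near $r=1$, since $f(r)=(1-r)^{q/\az}g(r)$ with $g$ smooth and nonvanishing there. The standard endpoint lemma for Fourier integrals with an algebraic singularity $(1-r)_+^{a}$ (Erdélyi type, proved by repeated integration by parts away from $r=1$ plus a scaling argument in a $\lambda^{-1}$-neighbourhood of $r=1$) gives decay $\lambda^{-1-q/\az}$. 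Multiplying by the prefactor $\lambda^{-\frac{m-1}{2}}$ yields $\lambda^{-\frac{m+1}{2}-\frac q\az}$. The cutoff $\psi_1$ creates no boundary term at the left end.

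The main technical point is this last step. We must check that the symbol factors $a_\pm(2\pi r\lambda)$, whose $r$-derivatives gain powers of $(r\lambda)^{-1}$, do not spoil the integration by parts when $q/\az$ is small or non-integer. This should work because on $\operatorname{supp}\psi_1$ we have $r\sim1$, so each derivative of $a_\pm$ costs only $\lambda^{-1}$. If needed, we take finitely many terms of the Bessel expansion, and the remainder $O(z^{-K})$ contributes directly to a faster-decaying term.
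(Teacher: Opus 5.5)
Your proof is correct. The piece near $r=1$ is handled as in the paper: the paper's proof of this lemma is ``as in the $(w,0)$ case'', i.e.\ the $\mathcal{I}_1/\mathcal{I}_2$ split of Lemma~\ref{lem24}, and its near-$1$ part inserts the Bessel asymptotics \eqref{ff11} and applies Erd\'elyi's endpoint estimate to the $(1-r)^{q/\alpha}$ singularity, as you do. One wording fix: the point is not that ``each derivative of $a_\pm$ costs $\lambda^{-1}$''. The chain-rule factor $\lambda$ is cancelled by the symbol gain, so each $r$-derivative of $a_\pm(2\pi r\lambda)$ is $O(r^{-1})=O(1)$ on $\operatorname{supp}\psi_1$. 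Hence $g_\pm(r)a_\pm(2\pi r\lambda)$ is bounded in $C^K([1/3,1])$ uniformly in $\lambda$, and the endpoint bound $\lambda^{-1-q/\alpha}$ holds uniformly.

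The genuine difference is near the origin. The paper stays with the one-dimensional integral $\int_0^1 J_{\frac m2-1}(\lambda r)\,r^{\frac m2}(1-r^{\alpha/2})^{q/\alpha}(1-\varphi)\,dr$. It integrates by parts repeatedly via the recursion \eqref{ff12}, uses that the iterated amplitudes generated from $r^{\alpha/2-1}(1-r^{\alpha/2})^{q/\alpha-1}$ are $O(r^{\alpha/2+1-2k})$, and splits at $r=\lambda^{-1}$ using \eqref{ff05}--\eqref{ff06}.

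You instead return to $\mathbb{R}^m$ and expand the profile in powers of $|t|^{\alpha/2}$. You then use the $|s|^{-m-a}$ decay of the Fourier transform of $\psi_0(|t|)|t|^{a}$, which follows from a dyadic decomposition, and treat the $C^K$ remainder crudely.

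What your route buys:
\begin{itemize}
\item It shows exactly where the exponent $m+\frac{\alpha}{2}$ comes from, namely the term $-\frac{q}{\alpha}|t|^{\alpha/2}$ of the slice-volume profile at $t=0$.
\item It gives an asymptotic expansion of this piece rather than only an upper bound.
\item It makes the $\alpha\in4\mathbb{N}^*$ improvement immediate, since $|t|^{\alpha/2}$ is then a polynomial.
\end{itemize}
What the paper's route buys: it avoids the homogeneous-singularity Fourier input and uses only the Bessel toolkit (recursion plus small- and large-argument bounds). The paper needs that toolkit anyway for the two-Bessel integrals in Lemma~\ref{lem25}.
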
 

\begin{lemma} \label{lem25}
Let $\az>0, c_1, c_2>0$
 and $|w|\ge c_1,|s|\ge c_2$. Then for $\az\ge2-m$, we have
\begin{align} \label{ff18}
\left|\widehat{\chi_{B_1^{\az,M}}}(w, s)\right| \lesssim |w|^{-\frac{q}{2}} |s|^{-\frac{m}{2}} |(w, s)|^{-\sigma} .
\end{align}
Furthermore, if $\frac{q}{2}+1 \ge \az \ge 3-m$, then  \eqref{ff18} can be improved to 
\begin{align} \label{ff19}
\left|\widehat{\chi_{B_1^{\az, M}}}(w, s)\right| \lesssim |w|^{-\frac{q-1}{2}} |s|^{-\frac{m}{2}} |(w, s)|^{-\sigma-\frac12} .
\end{align}
The implicit constants in  \eqref{ff18}-\eqref{ff19}   depend only on $\az,M,q,m$ and $c_1,c_2$.
\end{lemma}

\section{Proof of main results}  \label{mainP} 
Note that 
\begin{align} \label{redu} 
    \#\left(\Gz_L \cap B_R^{\az,M}\right)=\#\left(\mathbb{Z}^{q+m} \cap B_R^{\az,\widetilde{M}}\right), \qquad \frac{\vol\left(B_R^{\az,M}\right)}{|\det(L)|} = {\vol\left(B_R^{\az,\widetilde{M}}\right)},
\end{align}
where $\widetilde{M}:=ML=\diag\{M_1L_1,M_2L_2\}$. Hence we only need to cope with the special case where $L=\ii_{q+m}$ and  $M$ is invertible, from which the general case will follow immediately. 

\subsection{Proof of Theorem \ref{thm1}}   \label{ss31} 
At present, our target is to analyze
$$\#\left(\mathbb{Z}^{q+m} \cap B_R^{\az,M}\right) = \sum_{j \in \mathbb{Z}^{q+m}}\chi_{B_R^{\az,M}}(j).$$
For that, recall the Poisson summation formula (see e.g.  \cite[Theorem 3.2.8]{Gra14a}), for $f\in C_c^\infty(\rr^{q+m})$, 
\begin{equation}  \label{poif}
   \sum_{j \in \mathbb{Z}^{q+m}} f(j)= \sum_{k \in \mathbb{Z}^{q+m}} \widehat{f}(k).
\end{equation}
Since when $\az\ge1$,  $\cN_{\az,M}$ is subadditive by Proposition  \ref{prp1}, then one can examine that
\begin{equation} \label{rho} 
    \chi_{B_{R-\epsilon}^{\az,M}} * \rho_\epsilon(x,t) \le \chi_{B_R^{\az,M}}(x,t) \le \chi_{B_{R+\epsilon}^{\az,M}} * \rho_\epsilon(x,t), \quad\fall  (x,t)\in\rr^{q+m}.
\end{equation}
Here, $\ez \in(0,1)$ is to be determined later; $*$ denotes the usual Euclidean convolution; $\rho\in C_c^\infty(\rr^{q+m})$  is non-negative and supported in ${B_{1}^{\az,M}}$ with integral value 1; and $\rho_\ez :=\ez ^{-Q}\rho(\bdz_{\ez ^{-1}}(\cdot))$.  To show  \eqref{rho}, first observe that the convolution terms in  \eqref{rho} both take   values only in $[0,1]$. Then it is enough to exploit the following facts:
\begin{equation*}
    (x,t)\notin B_{R}^{\az,M} \Rightarrow (x,t)\notin\left(B_{R-\epsilon}^{\az,M} + B_{\epsilon}^{\az,M} \right), \quad (x,t)\in B_{R}^{\az,M} \Rightarrow (x,t)\in\left(B_{R+\epsilon}^{\az,M} - B_{\epsilon}^{\az,M} \right).
\end{equation*}

We next use $\chi_{B_{R\pm\epsilon}^{\az,M}} * \rho_\epsilon$ to replace $\chi_{B_{R}^{\az,M}}$, and plug them into the Poisson summation formula, and finally sum up the 
the resulting errors. Precisely, by  \eqref{poif}-\eqref{rho},  on one hand,
\begin{equation*}
    \#\left(\mathbb{Z}^{q+m} \cap B_R^{\az,M}\right) \le \sum_{k \in \mathbb{Z}^{q+m}}\left(\chi_{B_{R+\epsilon}} * \rho_\epsilon\right)^\wedge (k) =
    \sum_{k \neq 0} \widehat{\chi_{B^{\az,M}_{R+\epsilon}}}(k) \widehat{\rho}_\epsilon(k)+ \vol\left( B^{\az,M}_{R+\epsilon}\right);
\end{equation*}
and on the other, 
\begin{equation*}
    \#\left(\mathbb{Z}^{q+m} \cap B_R^{\az,M}\right) \ge \sum_{k \in \mathbb{Z}^{q+m}}\left(\chi_{B_{R-\epsilon}} * \rho_\epsilon\right)^\wedge (k) =
    \sum_{k \neq 0} \widehat{\chi_{B^{\az,M}_{R-\epsilon}}}(k) \widehat{\rho}_\epsilon(k) + \vol\left( B^{\az,M}_{R-\epsilon}\right).
\end{equation*}
From  \eqref{ballA} we see that, for all $R\ge10$,
\begin{equation} \label{red1} 
\left|\#\left(\mathbb{Z}^{q+m} \cap B_R^{\az,M}\right) -  \vol\left( B^{\az,M}_{R}\right) \right|   \lsim  \sum_{\pm}\sum_{k \neq 0} \left| \widehat{\chi_{B^{\az,M}_{R\pm\epsilon}}}(k)\right| |\widehat{\rho}_\epsilon(k)| +R^{Q-1}\ez.
\end{equation}
Because $\widehat{\rho}$ is of Schwartz class  and (through a routine verification)
\begin{equation*}
\widehat{\chi_{B^{\az,M}_{R}}}(k)=R^Q \,\widehat{\chi_{B^{\az,M}_{1}}}(Rk',R^2k''),\qquad   \widehat{\rho_\ez}(k)= \widehat{\rho}(\ez k',\ez^2 k''),
\end{equation*}
then by  \eqref{red1}, we have after a normalization that
\begin{equation} \label{red3} 
    \mathcal{P}_{q,m}^{\az,M}(R)\lsim  \sum_{\pm} \bS(R\pm\ez,\ez)  + R^{-1}\ez,
\end{equation}
with
\begin{align*}
    \bS(R,\ez) &:= \sum_{k \neq 0} \left| \widehat{\chi_{B^{\az,M}_{1}}}(Rk',R^2k'')  \right| \left(1+\ez\left|k^{\prime}\right|+\ez^2\left|k^{\prime \prime}\right|\right)^{-N}\\
    &=\sum_{k'\neq0,\, k''=0}\cdots + \sum_{k'=0,\, k''\neq0}\cdots + \sum_{k'\neq0,\, k''\neq0}\cdots \\
    &=: \sum_{j=1}^3 \bS_j(R,\ez),
\end{align*}
where $N>0$ is a large constant to be chosen (indeed $N=q+m+1$ will suffice). Thus we are  reduced to the estimation of each $\bS_j(R,\ez)$ for large $R$ and small $\ez$. In fact, we will prove that, whenever $R\ge10$,
\begin{gather}
    \bS_1(R,\ez) \lsim R^{-\beta_1} \cE(q,\beta_1), \quad \mbox{for}\ \az>0, \label{es1} \\[1mm]
    \bS_2(R,\ez) \lsim R^{-2\beta_2}\, \cE(2m,2\beta_2), \quad \mbox{for}\ \az>0, \label{es2} \\[1mm]
   \bS_3(R,\ez) \lsim \begin{cases}
   R^{-\frac{Q}{2}-2 \sigma}\left(\ez^{-1}\right)^{\frac{q}{2}}\cE(m,2\sz), &\mbox{for $\az\ge 2-m$},\\
   R^{-\frac{Q+1}{2}-2 \sigma}\left(\ez^{-1}\right)^{\frac{q+1}{2}}\cE(m-1,2\sz), &\mbox{for}\ \frac{q}{2}+1 \ge \az \ge 3-m, \\
   \end{cases}  \label{es3} 
\end{gather}
where $\sz$ is given by  \eqref{sig},  and
\begin{equation*} \label{bbt2} 
    \beta_1:=\frac{q}{2}+\frac{1}{2} \min \left\{q+2 \alpha, \frac{4 m}{\alpha}+1\right\},\quad \beta_2:=\frac{m}{2}+\frac{1}{2} \min \left\{m+\alpha, \frac{2 q}{\alpha}+1\right\},
\end{equation*}
and $\cE(\cdot)$ is defined by,  for $\ez\in(0,1)$, 
\begin{equation*}
     \cE(\eta_1,\eta_2):=
         \begin{cases}
         (\ez^{-1})^{\eta_1-\eta_2}, &\mbox{as}\  \eta_1>\eta_2, \\
         \log (\ez^{-1}), &\mbox{as}\  \eta_1=\eta_2, \\
         1, &\mbox{as}\  \eta_1<\eta_2, \\
         \end{cases}
         \qquad \fall\eta_1,\eta_2\in\rr.
\end{equation*}

To show  \eqref{es1},  by Lemma  \ref{lem24} we obtain
\begin{equation*}
 \begin{aligned}
   \bS_1(R,\ez)    & \lesssim R^{-\beta_1} \sum_{k^{\prime} \neq 0}\left|k^{\prime}\right|^{-\beta_1}\left(1+\ez \left|k^{\prime}\right|\right)^{-N}  \\
   &\lesssim R^{-\beta_1} \int_{\{z\in\rr^q:\,|z|\ge1\}} |z|^{-\beta_1}(1+\ez|z|)^{-N} d z \\
    & \lsim R^{-\beta_1} \int_{1}^{\infty} r^{-\beta_1+q-1}(1+\ez  r)^{-N} d r \\
    &\le R^{-\beta_1}\left(\int_{1}^{\ez^{-1}} r^{- \beta_1+q-1} d r+\ez^{-N} \int_{\ez^{-1}}^{\infty} r^{- \beta_1+q-1-N} d r\right), 
    \end{aligned}
\end{equation*}
which gives the desired estimation  if $N>q-\beta_1$. To show  \eqref{es2}, we see  from Lemma  \ref{lem23}  that
\begin{equation*}
    \begin{aligned}
   \bS_2(R,\ez) &\lesssim \sum_{k^{\prime\prime} \neq 0} R^{-2 \beta_2}\left|k^{\prime \prime}\right|^{-\beta_2}\left(1+\ez ^2\left|k^{\prime \prime}\right|\right)^{-N}\\
   & \lesssim R^{-2 \beta_2} \int_{\{z\in\rr^m:\,|z|\ge1\}} |z|^{-\beta_2}\left(1+\ez^2|z|\right)^{-N} d z \\
    & \lsim R^{-2\beta_2} \int_{1}^{\infty} r^{-\beta_2+m-1}(1+\ez ^2 r)^{-N} d r \\
    &\le R^{-2 \beta_2}\left(\int_{1}^{\ez ^{-2}} r^{-\beta_2 +m-1} d r+\ez ^{-2 N} \int_{\ez ^{-2}}^{\infty} r^{-\beta_2+m-1-N} d r\right),
    \end{aligned}
\end{equation*}
which implies our claim provided that $N>\max\{q-\beta_1,m-\bz_2\}$. To show  \eqref{es3},  via  \eqref{ff18} of Lemma  \ref{lem25} it yields that
\begin{equation*}
    \begin{aligned}
     \bS_3(R,\ez)
     &\lesssim \sum_{k^{\prime} \neq 0,\, k^{\prime \prime} \neq 0} R^{-\frac{q}{2}}\left|k^{\prime}\right|^{-\frac{q}{2}} R^{-m}\left|k^{\prime \prime}\right|^{-\frac{m}{2}}\left(R\left|k^{\prime}\right|+R^2\left|k^{\prime \prime}\right|\right)^{-\sigma}\left(1+\ez \left|k^{\prime}\right|+\ez ^{2}\left|k^{\prime \prime}\right|\right)^{-N} \\
    &\lesssim R^{-\frac{Q}{2}} \int_{|z'|\ge1,|z''|\ge1}  \left|z^{\prime}\right|^{-\frac{q}{2}}\left|z^{\prime \prime}\right|^{-\frac{m}{2}}\left(R\left|z^{\prime}\right|+R^2\left|z^{\prime \prime}\right|\right)^{-\sigma}\left(1+\ez \left|z^{\prime}\right|+\ez ^2\left|z^{\prime \prime}\right|\right)^{-N} d z^{\prime} d z^{\prime \prime} \\
    & \lsim R^{-\frac{Q}{2}} \int_{1}^{\infty} \int_{1}^{\infty} r_1^{\frac{q}{2}-1} r_2^{\frac{m}{2}-1}\left(R r_1+R^2 r_2\right)^{-\sigma}\left(1+\ez  r_1+\ez ^2 r_2\right)^{-N} d r_1 d r_2 \\
    & \le R^{-\frac{Q}{2}}\left[\int_{1}^{\ez ^{-1}} \int_{1}^{\ez ^{-2}} r_1^{\frac{q}{2}-1} r_2^{\frac{m}{2}-1}\left(R^2 r_2\right)^{-\sigma} d r_1 d r_2\right. \\
    &\qquad +\int_{1}^{\ez ^{-1}} \int_{\ez ^{-2}}^{\infty} r_1^{\frac{q}{2}-1} r_2^{\frac{m}{2}-1}\left(R^2 r_2\right)^{-\sigma}\left(\ez ^2 r_2\right)^{-N} d r_1 d r_2 \\
    &\qquad +\int_{\ez ^{-1}}^{\infty} \int_{1}^{\ez ^{-2}} r_1^{\frac{q}{2}-1} r_2^{\frac{m}{2}-1}\left(R^2 r_2\right)^{-\sigma}\left(\ez  r_1\right)^{-N} d r_1 d r_2 \\
    &\qquad \left.+\int_{\ez ^{-1}}^{\infty} \int_{\ez ^{-2}}^{\infty} r_1^{\frac{q}{2}-1} r_2^{\frac{m}{2}-1}\left(R^2 r_2\right)^{-\sigma}\left(\ez r_1\right)^{-\frac{N}{2}}\left(\ez ^2 r_2\right)^{-\frac{N}{2}} d r_1 d r_2\right],
    \end{aligned}
\end{equation*}
which readily leads to the first estimate in  \eqref{es3} as long as $N>\max\{q,m\}$; while the second one can also be justified likewise by using  \eqref{ff19} instead (with $N>\max\{q+1,m\}$).

The remaining proof now can be finished by determining suitable $\ez$ to control the resulting errors, based on various conditions. For  (i) of Theorem  \ref{thm1}, we use  \eqref{red3}-\eqref{es2}  and the second estimate in  \eqref{es3} to get
\begin{equation*}  \label{TOUPDA} 
    \mathcal{P}_{q,m}^{\az,M}(R)\lsim R^{-\beta_1} \cE(q,\beta_1) + R^{-2\beta_2}\, \cE(2m,2\beta_2) + R^{-\frac{Q+1}{2}-2 \sigma}\left(\ez^{-1}\right)^{\frac{q+1}{2}}\cE(m-1,2\sz)  + R^{-1}\ez.
\end{equation*}
To balance the four quantities we let $\ez=R^{-1}$, and this will imply  \eqref{GOA1} after some tedious verifications. Let us inspect the situation that $\az\in[2,\infty)$ as an example. Recall  $q\ge2$. At this moment, it holds $\sz=1/2$, and the first three resulting terms (denoted by $\bB_j,\,j=1,2,3$) are bounded by
\begin{equation*}
    \bB_1\lsim \begin{cases}
    R^{-\frac{4m}{\az}-1}, & \mbox{as}\ q>\bz_1,\\
    R^{-\frac{4m}{\az}-1}\log R, & \mbox{as}\ q=\bz_1,\\
    R^{-\bz_1}, & \mbox{as}\ q<\bz_1,\\
    \end{cases}
    \quad
    \bB_2\lsim R^{-\gz},
    \quad
   \bB_3\lsim 
   \begin{cases}
   R^{-2-2 \sz} \log R,&\mbox{as}\ m \geq 2, \\
   R^{-1-2 \sigma}, &\mbox{as}\ m=1,
   \end{cases}
\end{equation*}
respectively, where $\gz=\gz(q,m,\az)>2$ is a constant. From this,  \eqref{GOA1} follows immediately.  The situation where $\az\in[1,2)$ can also be clarified analogously. 

For the rest cases, we conclude from   \eqref{red3}-\eqref{es2}  and the first inequality in  \eqref{es3} that
\begin{equation*}
    \mathcal{P}_{q,m}^{\az,M}(R)\lsim R^{-\beta_1} \cE(q,\beta_1) + R^{-2\beta_2}\, \cE(2m,2\beta_2) + R^{-\frac{Q}{2}-2 \sigma}\left(\ez^{-1}\right)^{\frac{q}{2}}\cE(m,2\sz) + R^{-1}\ez.
\end{equation*}
Similarly, the desired conclusions can be drawn by choosing $\ez$ as follows and plugging them into the above estimation:
\begin{itemize}
\vspace{-.2cm}
\item for (ii) we take   $\ez=R^{-1}\log^{\frac{2}{Q}} R$ if $q<\frac{4m}{\az}+1$ and $m=1$; and take $\ez=R^{-1}$ otherwise; 
\vspace{-.2cm}
\item for (iii) we take $\ez= R^{-\frac{q \az-\az+4m}{q \az+\az-4m}}$ if $\az>4m$; and take $\ez= R^{-1}\log^{\frac2Q \one\{m=1\}} R$ otherwise; 
\vspace{-.2cm}
\item for (iv) we take  $\ez=R^{-1}\log^{\frac{2}{q+2}} R$; 
\vspace{-.2cm}
\item for (v) we take $\ez=R^{-\frac{3 q+4}{3 q+10}} $.
\vspace{-.2cm}
\end{itemize}

\subsection{Proof of Theorem \ref{thm2}}  \label{ss32}
Let $\az\in(0,1]$. Although  by Proposition  \ref{prp1}, $\mathcal{N}_{\alpha,M}$ is not subadditive for $\az\in(0,1)$, which plays an important part in the proof of Theorem  \ref{thm1}, we can  consider $\mathcal{N}_{\alpha,M}^\az$ as a substitute, whose subadditivity is indeed a simple outcome of  \eqref{ffa99} and the primary inequality 
$$(a+b)^\az \le a^\az +b^\az, \qquad \ \fall a,b\ge0,\ \fall \az\in(0,1].$$
Consequently, we may still obtain a counterpart of  \eqref{rho} which reads, for $R\ge10$ and $\ez\in(0,1)$,
\begin{equation*} \label{rho2} 
    \chi_{B_{(R^\az-\epsilon^\az)^{1/\az}}^{\az,M}} * \rho_\epsilon(x,t) \le \chi_{B_R^{\az,M}}(x,t) \le \chi_{B_{(R^\az+\epsilon^\az)^{1/\az}}^{\az,M}} * \rho_\epsilon(x,t), \quad\fall  (x,t)\in\rr^{q+m}.
\end{equation*}
Then by appealing to Poisson's summation formula as in the proof Theorem   \ref{thm1}, with the fact  
\begin{equation*}
    \vol\left( B^{\az,M}_{(R^\az\pm \epsilon^\az)^{1/\az}}\right)=\vol\left( B^{\az,M}_{1}\right)R^Q + O(R^{Q-\az}\ez^{\az}),
\end{equation*}
we arrive at that, for $R\ge10$,
\begin{equation*} \label{red4} 
    \mathcal{P}_{q,m}^{\az,M}(R)\lsim  \sum_{\pm} \bS((R^\az\pm\epsilon^\az)^{1/\az},\ez)  + R^{-\az}\ez^{\az}.
\end{equation*}
This, together with \eqref{es1}-\eqref{es3}, implies that, for $m=2$ (using the first estimation of  \eqref{es3}),
\begin{equation*}
    \mathcal{P}_{q,m}^{\az,M}(R)\lsim R^{-\beta_1} \cE(q,\beta_1) + R^{-2\beta_2}\, \cE(2m,2\beta_2) + R^{-\frac{Q}{2}-1}\left(\ez^{-1}\right)^{\frac{Q}{2}-1} + R^{-\az}\ez^{\az},
\end{equation*}
and for $m\ge3$ (using the second one),
\begin{equation*}
    \mathcal{P}_{q,m}^{\az,M}(R)\lsim R^{-\beta_1} \cE(q,\beta_1) + R^{-2\beta_2}\, \cE(2m,2\beta_2) + R^{-\frac{Q+1}{2}-1}\left(\ez^{-1}\right)^{\frac{Q+1}{2}-2} + R^{-\az}\ez^{\az}.
\end{equation*}
Finally, from the choices that $\ez=R^{-\frac{Q+2-2 \alpha}{Q-2+2 \alpha}}$ for $m=2$, and $\ez=R^{-\frac{Q+3-2 \alpha}{Q-3+2 \alpha}}$ for $m\ge3$, Theorem  \ref{thm2} will follow after some elementary but prolix evaluations.

\subsection{Proof of Theorem \ref{thm3}}  \label{ss33}
Theorem  \ref{thm3} is a direct consequence of the classical lattice counting  problem in ellipsoids on $\rr^q$ combining a slicing argument. This procedure is almost identical to that of  \cite[Theorem 1.1 (3)]{GNT15}, so we shall omit the details. The main difference is that instead of discrepancy estimates for standard Euclidean balls, we employ the ellipsoid version. Recall that if $\cM$ is an $n$-th order positive definite matrix and $\cB_n:=\{x\in\rr^n: x\trp \cM x\le1\}$, then we have, for $R\ge10,$
\begin{equation*}
     \left| \#\left(\mathbb{Z}^{n} \cap R\,\cB_n\right)- {\vol\left(\cB_n\right)} R^n \right|\lsim 
     \begin{cases}
     R^{\frac23}, &\mbox{as}\  n=2, \\
     R^{\frac{231}{158}}, &\mbox{as}\  n=3, \\
     R^{2}\log R, &\mbox{as}\  n=4, \\
     R^{n-2}, &\mbox{as}\  n\ge5.
     \end{cases}
\end{equation*}
For $n=2$ the result can be found in e.g.  \cite[p. 9]{IKKN06}; for $n=3$ see \cite[Theorem 1.1]{Guo12}; for $n\ge4$ the estimates are contained in \cite[Theorem 1.5(i)] {Got04} ($n\ge5$) and its proof ($n=4$); Likewise, the upper bounds in $n=2,\,3,\,4$ can be improved if we impose finer conditions on $\widetilde{M}$ and appeal to stronger conclusions accordingly  as mentioned in Remark  \ref{rem1}.  

\subsection{Proof of Remark  \ref{rem0} (2)}  \label{prRE} 
The proof is classical and based on an argument by contradiction. Fix  $c>0$ such that $\bdz_c\widetilde{M}$ is an integral matrix. We are required to show that $ \mathcal{P}_{q,1}^{2,M}(R)\neq o(R^{-2})$.   Were this false,  in view of  \eqref{redu} we would have that
\begin{equation*}
  \left|  \#\left(\mathbb{Z}^{q+1} \cap B_{R}^{2,\bdz_c\widetilde{M}}\right)- {\vol\left(B_{R}^{2,\bdz_c\widetilde{M}}\right)} \right| =    {\vol\left(B_{c^{-1} R}^{2,\widetilde{M}}\right)}\, \mathcal{P}_{q,1}^{2,M}(c^{-1}R) = o(R^{q}), \quad \mbox{as}\ R\to \infty.
\end{equation*}
Notice that presently, 
\begin{equation*}
    \#\left(\mathbb{Z}^{q+1} \cap B_{\sqrt{N}}^{2,\bdz_c\widetilde{M}}\right) =  \#\left(\mathbb{Z}^{q+1} \cap B_{\sqrt{N+1/2}}^{2,\bdz_c\widetilde{M}}\right), \quad\fall N\in\nn^*.
\end{equation*}
Therefore,
$${\vol\left(B_{\sqrt{N}}^{2,\bdz_c\widetilde{M}}\right)} = {\vol\left(B_{\sqrt{N+1/2}}^{2,\bdz_c\widetilde{M}}\right)} + o(N^{\frac{q}2}), \quad \mbox{as}\ N\to \infty, $$
which leads to a contradiction by  \eqref{ballA}. This completes the proof.   

\subsection{Proof of Theorem  \ref{lpcpS}} \label{ss35}
The socalled ``ball-to-shell" argument refers to that one can straightforwardly derive the estimates for lattice counting in a thin spherical shell from that in a pair of large balls which surround it. More precisely, denote by $R^{-Q}E(R)$ the controlling bounds for $\mathcal{P}_{q,m}^{\az,M}(R)$ obtained in Theorems  \ref{thm1},  \ref{thm2} and  \ref{thm3}. Then  one can find  $C_0=C_0(q,m,\az,M,L)>0$ such that, for all $(x,t)\in\Gz_L$ and $\dz\in(0,2),$
\begin{align*}
    \#\left(\Gz_L \cap \left(B_{R+\dz}^{\az, M}(x,t)\backslash B_{R-\dz}^{\az, M}(x,t) \right)\right) &= \#\left(\Gz_L \cap B_{R+\dz}^{\az, M}(x,t)\right) - \#\left(\Gz_L \cap B_{R-\dz}^{\az, M}(x,t)\right) \\
    &= \#\left(\Gz_L \cap B_{R+\dz}^{\az, M}\right) - \#\left(\Gz_L \cap B_{R-\dz}^{\az, M}\right) \\
    &\le \frac{\vol\left(B_1^{\az,M}\right)}{|\det(L)|} ((R+\dz)^Q-(R-\dz)^Q ) \\
    &\qquad+ C_0 (E(R+\dz) +E(R-\dz))\\
    &\lsim R^{Q-1}\dz +E(R),
\end{align*}
which gives  \eqref{lps1} as required. For  \eqref{aveLp}, one directly uses  \eqref{lps1} (which holds for $\dz\in(0,2) $) and the following observation:
\begin{equation*}
 \begin{aligned}
    \mbox{LHS of  \eqref{aveLp}} &\le T^{-Q} \sum_{(x,t)\in\Gz_L(T)} \#\left(\Gz_L(T) \cap \left(B_{R+2\dz}^{\az, M}(x,t)\backslash B_{R-2\dz}^{\az, M}(x,t)\right) \right) \\
    &\lsim T^{-Q}\, T^{Q} \max_{(x,t)\in\Gz_L(T)} \#\left(\Gz_L(T) \cap \left(B_{R+2\dz}^{\az, M}(x,t)\backslash B_{R-2\dz}^{\az, M}(x,t)\right) \right) \\
    &\le  \sup_{(x,t)\in\Gz_L} \#\left(\Gz_L \cap \left(B_{R+2\dz}^{\az, M}(x,t)\backslash B_{R-2\dz}^{\az, M}(x,t)\right) \right). \\
   \end{aligned}
\end{equation*}
The proof of Theorem  \ref{lpcpS} is therefore finished. 

\section{Proof of Lemmas \ref{lem24}-\ref{lem23}}  \label{plem12} 
Here and in what follows, we write $B_1^{\az}:=B_1^{\az,\,\ii_{q+m}}$. A direct calculation verifies that 
\begin{equation} \label{std} 
    \widehat{\chi_{B_1^{\az,M}}}(w, s) = |\det(M)|^{-1}\widehat{\chi_{B_1^{\az}}}\left((M_1^{-1})\trp w, (M_2^{-1})\trp s\right).
\end{equation}
Notice that if $M_0$ is an invertible matrix of order $n\in\nn^*$, then
$$|M_0\,z|\sim_{M_0}|z|,\quad \forall z\in\rr^n.$$
Hence by \eqref{std}, to prove Lemmas \ref{lem24}-\ref{lem25}, we only need to consider the case that $M=\ii_{q+m}$.  Using  \eqref{rad}  one evaluates $\widehat{\chi_{B_1^{\alpha}}}(w, s)$ explicitly as follows:
\begin{align}
 \widehat{\chi_{B_1^\alpha}}(w, s)  &=\int_{B_1^\alpha} e^{-2 \pi i(w \cdot x+s \cdot t)} d x d t = \int_{|x| \leq 1} e^{-2 \pi i w \cdot x}\left(\int_{|t| \leq\left(1-|x|^\alpha\right)^{\frac{2}{\alpha}}} e^{-2 \pi i s \cdot t} d t\right)dx \nonumber\\
     &= \int_{|x| \leq 1} e^{-2 \pi i w \cdot x} (1-|x|^\az)^{\frac{m}{\az}}|s|^{-\frac{m}2} J_{\frac{m}2}\left(2\pi (1-|x|^\az)^{\frac2{\az}} |s|\right) dx \nonumber\\
     &=2 \pi|w|^{-\frac{q}{2}+1}|s|^{-\frac{m}{2}}  \int_0^1 J_{\frac{q}{2}-1}(2 \pi|w| r) J_{\frac{m}{2}}\left(2 \pi|s|\left(1-r^\alpha\right)^{\frac{2}{\alpha}}\right) r^{\frac{q}{2}}\left(1-r^\alpha\right)^{\frac{m}{\alpha}} d r    \label{ff01}\\
     & =   2 \pi|w|^{-\frac{q}{2}}|s|^{-\frac{m}{2}+1}    \int_0^1 J_{\frac{m}{2}-1}\left(2 \pi|s| r\right) J_{ \frac{q}{2}} \left(2 \pi|w|\left(1-r^{\frac{\alpha}{2}}\right)^{\frac{1}{\alpha}}\right) r^{\frac{m}{2}}\left(1-r^{\frac{\alpha}{2}}\right)^{\frac{q}{2 \alpha}} d r.   \label{ff01'} 
\end{align}
Here in the last step we have used similar argument as in the proof of  \eqref{ff01}, but integrating the $x$-variable first, and  $J_\nu$ is the Bessel function of order $\nu$, defined by (first for $\Re(\nu)>-\frac12$)
 \begin{equation*}
     J_\nu(r)=\frac{\left(\frac{r}{2}\right)^\nu}{\Gamma\left(\nu+\frac{1}{2}\right) \Gamma\left(\frac{1}{2}\right)} \int_{-1}^{+1} e^{i r s}\left(1-s^2\right)^\nu \frac{d s}{\sqrt{1-s^2}}, \quad r>0.
 \end{equation*}
Applying  the power series, $J_\nu$ has an entire
extension in $\nu$ on the complex plane for all $r>0$; see e.g.  \cite[\S B]{Gra14a} and  \cite[]{Wat95}  for more details. As to our applications, we  always assume that $\nu\in\rr$.
 
Let us recall some useful properties of Bessel functions as follows:
\begin{itemize}
\item {Recursion formulas} (\cite[\S 8.471]{GR15}):
\begin{gather}
      \frac{d}{d r}\left(r^\nu J_\nu(r)\right)= r^\nu J_{\nu-1}(r), \quad \nu\in\rr,\,r>0,  \label{ff12} \\
      J_{-n}(r)=(-1)^n J_n(r), \quad n\in\zz,\,r>0.  \label{ff13}
\end{gather}

\item Small argument estimates (\cite[\S B]{Gra14a}):
\begin{equation} \label{ff05}
    |J_\nu(r)|\le C_\nu\, r^{\nu}, \quad \forall\, r\le1;\ \forall\,\nu>-\frac32.
\end{equation}

\item Large argument estimates (\cite[\S 8.451]{GR15}): 
\begin{equation} \label{ff06}
    |J_\nu(r)|\le C_\nu'\, r^{-\frac12}, \quad \forall\, r\ge1;\  \forall\,\nu\in\rr.
\end{equation}

\item Asymptotics at infinity  (\cite[\S 8.451]{GR15}): 
\begin{equation} \label{ff11}
J_\nu(r) \sim r^{-\frac{1}{2}} \sum_{\pm} e^{ \pm i r} \sum_{j=0}^{\infty} a_{\nu, j}^{\pm}\, r^{-j}, \quad \mathrm{as}\,\, r\to\infty,
\end{equation}
where $a^\pm_{\nu, j}$'s are complex numbers that can be determined exactly, and the asymptotic sum holds in the sense that, for any $N\in\nn$,
\begin{equation*}
    J_\nu(r) - r^{-\frac{1}{2}} \sum_{\pm} e^{ \pm i r} \sum_{j=0}^{N} a_{\nu, j}^{\pm}\, r^{-j} = O(r^{-N-\frac32}), \quad \mathrm{as}\,\, r\to\infty.
\end{equation*}
\end{itemize}

\textbf{Proof of Lemma  \ref{lem24}.} 
 Note that by \cite[\S B.6]{Gra14a} 
 \begin{equation} \label{Jv0} 
     \lim_{r\to0+} J_\nu(r)r^{-\nu}=\frac{1}{2^\nu\,  \Gamma(\nu+1)}, \qquad\fall \nu>-\frac12.
 \end{equation}
Then letting $s\to 0$ in  \eqref{ff01} gives  
\begin{equation*}
    \begin{aligned}
    \widehat{\chi_{B_1^\alpha}}(w, 0)  = \frac{2\pi^{\frac{m}{2}+1}}{\Gamma\left(\frac{m}{2}+1\right)} |w|^{-\frac{q}{2}+1} \int_0^1 J_{\frac{q}{2}-1}(2 \pi r|w|) r^{\frac{q}{2}}\left(1-r^\alpha\right)^{\frac{2 m}{\alpha}} d r .
    \end{aligned}
\end{equation*}
Since the desired estimates  are trivial for $ |w|\lsim1$, we are left with large $|w|$. Let  $ \lz=2\pi|w|\ge10$. We write
\begin{equation*}
 \mathcal{I}(\lz):= \frac{\Gamma\left(\frac{m}{2}+1\right)}{2^\frac{q}2\pi^{\frac{q+m}{2}}} \lz^{\frac{q}{2}-1} \widehat{\chi_{B_1^\alpha}}(w, 0)= \int_0^1 J_{\frac{q}{2}-1}(\lambda  r) \, r^{\frac{q}{2}}\left(1-r^\alpha\right)^{\frac{2 m}{\alpha}} d r.
\end{equation*}
To control it, the points $r=0$ and $r=1$ are treated  separately. Fixing a non-negative function $\varphi\in C^\infty(\rr)$ such that 
\begin{equation} \label{vhi} 
    \varphi(r)= \begin{cases}0, & r \leq \frac{1}{4}, \\ 1, & r \geq \frac{1}{2},\end{cases}
\end{equation}
then the integral can be split  as follows:
\begin{equation*}
    \mathcal{I}=\int_0^1 \varphi\cdots +\int_0^1(1-\varphi) \cdots =: \mathcal{I}_1+\mathcal{I}_2.
\end{equation*}
In the sequel, to get Lemma \ref{lem24} we will show that
\begin{equation} \label{Ig24} 
    |\cI_1(\lz)|\lsim\lambda^{-\frac{2 m}{\alpha}-\frac{3}{2}}, \qquad |\cI_2(\lz)| \lsim \lz^{-\frac{q}{2}-\az-1},
\end{equation}
and for any $\az\in2\nn$ that
\begin{equation} \label{IG24'} 
    |\cI_2(\lz)| \lsim_k \lz^{-k},\quad \fall k\in\nn.
\end{equation}

For $\mathcal{I}_1$, the new amplitude function behaves well near $0$, but there is a singularity of order $2m/\az$ at point $1$. For that, we let 
$$\varphi_0(r):=\left(\frac{1-r^\alpha}{1-r}\right)^{\frac{2 m}{\alpha}}, \quad r \in[0,1),\qquad \varphi_0(1):=\alpha^{\frac{2 m}{\alpha}}.$$
It is easily seen that $\varphi\varphi_0\in C^\infty([0,1]).$
Then we  can write
\begin{equation*}
    \cI_1(\lambda)=\int_0^1 J_ {\frac{q}{2}-1}(\lambda  r)(1-r)^{\frac{2 m}{\alpha}} \varphi(r) \varphi_0(r) r^{\frac{q}{2}} d r ,
\end{equation*}
which, after plugging \eqref{ff11} into it,  can be bounded by
\begin{equation*}
    \begin{aligned}
    \cI_1^{(N)}(\lambda) & :=\sum_{j=0}^{N-1} C_j \lambda^{-j-\frac{1}{2}}\sum_\pm\left|\int_0^1 e^{ \pm  i \lambda r}(1-r)^{\frac{2 m}{\alpha}} \tilde{\varphi}_j(r) d r\right|+C_N \lambda^{-N-\frac{1}{2}} \int_0^1(1-r)^{\frac{2 m}{\alpha}} \tilde{\varphi}_N(r) d r \\
    & =: \sum_{j=0}^{N-1} C_j \lambda^{-j-\frac{1}{2}}\ \cI_{1, j}(\lambda) + \cI_{1, N}(\lambda) ,
    \end{aligned}
\end{equation*}
where $N\in\nn^*$
 will be chosen soon, $C_j>0$ is suitable  constant and 
 $$\tilde{\varphi}_j(r):=\varphi(r) \varphi_0(r) r^{\frac{q}{2}-\frac{1}{2}-j}, \quad j=0,1,\ldots, N .$$
For $ \cI_{1, N}$, it is obvious that $\cI_{1, N}(\lambda) \lsim \lambda^{-N-\frac{1}{2}} $. For
$\cI_{1, j} (0\le j \le N-1)$, since $\tilde{\varphi}_j\in C^\infty([0,1]) $ and $ \tilde{\varphi}_j^{(k)}(0)=0, \forall k \in \mathbb{N} $, by
the standard  oscillatory integral theory with singularities in the integrands (see e.g.  \cite[pp. 47-48]{Erd56}),  we have $\cI_{1, j}(\lambda)\lsim\lambda^{-\frac{2 m}{\alpha}-1}$. As a result, one can choose an integer $N$ larger than $2m/\az+1$, and obtain
$$\left|\cI_1(\lambda)\right| \le \cI_1^{(N)}(\lambda)  \lesssim \sum_{j=0}^{N-1} \lambda^{-j-\frac{1}{2}} \cdot \lambda^{-\frac{2 m}{\alpha}-1}+\lambda^{-N-\frac{1}{2}} \lesssim \lambda^{-\frac{2 m}{\alpha}-\frac{3}{2}}.
$$

For  $\mathcal{I}_2$, the asymptotic expansions of $J_{\frac{q}{2}-1}$ are not available near $0$. However, we can use the recursion formula  \eqref{ff12}  of Bessel functions to perform integration by parts, which will lead to a better decay. Specifically, by  \eqref{ff12}  we can write
$$\begin{aligned}
\cI_2(\lambda) =\left.\lambda ^{-\frac{q}{2}} \frac{(\lambda  r)^{\frac{q}{2}} J_{\frac{q}{2}}(\lambda  r)}{ \lambda}(1-\varphi)\left(1-r^\alpha\right)^{\frac{2 m}{\alpha}} \right|_0 ^1 
-\frac{ \lambda^{-\frac{q}{2}}}{\lambda } \int_0^1(\lambda  r)^{\frac{q}{2}} J_{\frac{q}{2}}(\lambda  r)\left(-\varphi_1-\varphi_2\right) d r,
\end{aligned}$$
where $\varphi_1=\varphi^{\prime}(r)\left(1-r^\alpha\right)^{\frac{2m}{\az}} $ and $ \varphi_2=2 m(1-\varphi(r)) r^{\alpha-1}\left(1-r^\alpha\right)^{\frac{2 m}{\alpha}-1} .$ Note that the first item  in RHS equals 0 by  \eqref{ff05} and the definition of $\vhi$, whence
\begin{equation*}
    \begin{aligned}
    \cI_2(\lambda) & =\lambda^{-\frac{q}{2}-1} \sum_{j=1}^2 \int_0^1(\lambda  r)^{\frac{q}{2}} J_{\frac{q}{2}}(\lambda  r) \varphi_j(r) d r     =: \sum_{j=1}^2 \cI_{2, j}(\lambda) .
    \end{aligned}
\end{equation*}

The estimation of $\cI_{2,1}$ is easy. Since $\supp\, \vhi_1\subset[\frac14,\frac12]$, there is no singularity in the amplitude. We then  define inductively 
$$
 \varphi_{1, k+1}(r):= (r^{-1} \varphi_{1, k})^{\prime}(r), \qquad \varphi_{1,1}:=\varphi_1, \qquad k\ge1,
 $$ and  set
 $$ \cI_{2,1}^{(k)}(\lambda):=(-1)^{k-1}  \lambda^{-\frac{q}{2}+1-2 k} \int_0^1(\lambda  r)^{\frac{q}{2}+k-1} J_{\frac{q}{2}+k-1}( \lambda r)\, \varphi_{1, k}(r)\, d r .$$
Utilizing  \eqref{ff12}  and integration by parts repeatedly we see that $\cI_{2,1}=\cI_{2,1}^{(k)},\, \fall k\ge1$. One can also easily verify that 
$$\left|\varphi_{1, k}(r)\right| \leq C_{\alpha, m, k}, \qquad \left|J_ {\frac{q}{2}+k-1}(\lz r)\right| \leq C_{q, k, 1}, \qquad \forall\, k \geq 1 ,$$
 which implies that
\begin{equation} \label{I21} 
    \left|\cI_{2,1}(\lambda)\right|=\left|\cI_{2,1}^{(k)}(\lambda)\right| \lesssim \lambda^{-\frac{q}{2}+1-2 k} \int_0^1 \lambda^{\frac{q}{2}+k-1} r^{\frac{q}{2}+k-1} d r \lesssim \lambda^{-k} , \quad \forall \,k \ge 1 .
\end{equation} 

To estimate $\cI_{2,2}$, more care is necessary for  singularities of the type $ r^{\az-1}$. Likewise, we define    
$$
 \varphi_{2, k+1}(r):= (r^{-1} \varphi_{2, k})^{\prime}(r),\quad \varphi_{2,1}:=\varphi_2, \quad k\ge1$$
  and  put 
\begin{equation*}
    \mathcal{I}_{2, 2}^{(k)}(\lambda):=(-1)^{k-1} \lambda^{-\frac{q}{2}+1-2 k} \int_0^1(\lambda r)^{\frac{q}{2}+k-1} J_{\frac{q}{2}+k-1}(\lambda r) \varphi_{2, k}(r) d r .
\end{equation*}
For any $k\ge1$, through direct  calculations  we have that $\cI_{2,2}=\cI_{2,2}^{(k)}$, and that
\begin{equation} \label{ff130} 
     \left|\varphi_{2, k}(r)\right| \le C_{\alpha, m, k}^{\prime}\  r^{\alpha+1-2 k}, \qquad \forall r \in(0,1). 
\end{equation}
Consequently, 
\begin{align*}
    \left|\cI_{2, 2}(\lambda)\right|=\left|\cI_{2,2}^{(k)}(\lambda)\right|  &\lesssim \lambda^{-k} \int_0^1 r^{\alpha+\frac{q}{2}-k} \left|J_{\frac{q}{2}+k-1}( \lambda r) \right\rvert\, d r \\
    & =:\lambda^{-k}\left(\int_0^{\lambda^{-1}}\cdots+\int_{\lambda^{-1}}^1\cdots\right).
\end{align*}
The first integral in the brace can be controlled by
\begin{equation*}
    \lambda^{\frac{q}{2}+k-1} \int_0^{\lambda^{-1}} r^{\alpha+q} d r \lesssim \lambda^{-\frac{q}{2}-\alpha-1+k},
\end{equation*}
if we invoke  \eqref{ff05}; while for the second one by  \eqref{ff06}  it yields that
\begin{equation*}
    \left|\int_{\lambda^{-1}}^{1}\cdots\right| \lesssim \lambda^{-\frac{1}{2}} \int_{\lambda^{-1}}^{1} r^{\alpha+\frac{q}{2}-\frac{1}{2}-k} d r \lesssim \lambda^{-\frac{q}{2}-\alpha-1+k} .
\end{equation*}
where we have chosen an $k>\az+\frac{q+3}2$. These estimates above give that $\left|\cI_{2,2}(\lambda)\right| \lsim \lambda^{-\frac{q}{2}-\alpha-1} ,$ from which, together with  \eqref{I21}, it follows that
\begin{equation*}
    \left|\cI_2(\lambda)\right| \leq\left|\cI_{2,1}(\lambda)\right|+\left|\cI_{2,2}(\lambda)\right| \lsim \lambda^{-\frac{q}{2}-\alpha-1} 
\end{equation*}
as required in  \eqref{Ig24}. 

Finally, we are left to the case that $\az\in2\nn$. Indeed at this moment, from the definition of $\varphi_{2, k}$,  \eqref{ff130} can be improved to
$$\left|\varphi_{2, k}(r)\right| \le C_{\alpha, m, k}^{\prime\prime}, \qquad\forall k \ge 1, \ \forall r \in(0,1). $$
Then a similar argument as in the estimation of $\cI_{2,1}^{(k)} $ will lead to that
$$\left|\cI_{2,2}(\lambda)\right|=\left|\cI_{2,2}^{(k)}(\lambda)\right| \lesssim \lambda^{-k+1},\qquad \fall k\ge1,$$
which shows that $\left|\cI_2(\lambda)\right|=O\left(\lambda^{-k}\right) $. This proves  \eqref{IG24'} and hence Lemma  \ref{lem24}.

\textbf{Proof of Lemma  \ref{lem23}.} Using  \eqref{std} again, we also only need to bound $\widehat{\chi_{B_1^ \alpha}}(0,s)$. Note that by  \eqref{ff01'} and  \eqref{Jv0}  
\begin{equation*}
    \begin{aligned}
    \widehat{\chi_{B_1^ \alpha}}(0, s)   = \frac{2\pi^{\frac{q}{2}+1}}{\Gamma\left(\frac{q}{2}+1\right)} |s|^{-\frac{m}{2}+1} \int_0^1 J_{\frac{m}{2}-1}(2 \pi|s| r) r^{\frac{m}{2}}\left(1-r^{\frac{\alpha}{2}}\right)^{\frac{q}{\alpha}} d r.
    \end{aligned}
\end{equation*}
The rest proof can be done similarly as in the $(w,0)$ case. We omit the details.

\section{Proof of  Lemma \ref{lem25}}  \label{pblm3} 
The proof for $(w,s)$ case is somewhat more complicated. By  \eqref{ff01}-\eqref{ff01'}  we introduce
\begin{gather}
\cJ(\lz):=     C_{q,m}\, \lz_1^{\frac{q}2-1} \lz_2^{\frac{m}2} \widehat{\chi_{B_1^{\alpha}}}(w, s) = \int_0^1 J_{\frac{q}{2}-1}\left(\lambda_1 r\right) J_{\frac{m}{2}}\left(\lambda_2\left(1-r^\alpha\right)^{\frac{2}{\alpha}}\right) r^{\frac{q}{2}}\left(1-r^\alpha\right)^{\frac{m}{\alpha}} d r,  \label{ff101}     \\
\cK(\lz) :=   C_{q,m} \,  \lz_1^{\frac{q}2} \lz_2^{\frac{m}2-1} \widehat{\chi_{B_1^{\alpha}}}(w, s) = \int_0^1 J_{\frac{m}{2}-1}\left(\lambda_2 r\right) J_{\frac{q}{2}}\left(\lambda_1\left(1-r^{\frac{\alpha}{2}}\right)^\frac{1}{\alpha}\right) r^{\frac{m}{2}}\left(1-r^{\frac{\alpha}{2}}\right)^{\frac{q}{2 \alpha}} d r,   \label{XBde2}
\end{gather}
where $C_{q,m}:=(2\pi)^{-\frac{q+m}{2}},$ and $\lz=(\lz_1,\lz_2) :=(2\pi|w|,2\pi|s|)$. 

To treat  \eqref{ff101}, the skill developed in Section  \ref{plem12}  becomes less effective since there occur two Bessel functions in the integrand.  A natural idea is to use a resolution of the identity, which covers the interval $(0,1)$ by an intermediate part and two end parts; the latter are usually neglectable or easy to control because of the short lengths;   while for the former, one resorts to the asymptotics of Bessel functions and reduces matters to the following type of oscillatory integral with singularities at two ends formally: 
\begin{equation} \label{OI1} 
    \int_0^1 e^{i \phi_\lz^{\pm}(r)} r^{\mu_1} (1-r)^{\mu_2} \varphi_0(r) \,dr,
\end{equation}
for some $\mu_1,\mu_2\in\rr$ and smooth $\varphi_0$, where  $\phi_\lz^{\pm}$ is defined by
\begin{gather} \label{pha1} 
    \phi_\lambda^{\pm}(r):=\lambda_1 r\pm\lambda_2\left(1- r^\alpha\right)^{\frac{2}{\alpha}},  \qquad r\in(0,1).
\end{gather}
Regarding  this kind of phase, the  Lemma  (\ref{vand} below) of van der Corput is a useful tool. The problem then is how to acquire the optimal decay, or, how to decide the appropriate order of derivatives of the phase. Noting that for $\phi_\lambda^{-}$, its derivative has a lower bound  $\lz_1$ for all  $\az>0$. For $\phi_\lambda^{+}$,
when $\az\notin(1,2)$ the phase has at most a (unique non-degenerate)  stationary point.
However, when $ \az\in(1,2), $ the first two derivatives of the phase can both vanish at somewhere in $(0,1)$. This is distinct from the foregoing case and demands the third derivative. 

On the other hand, due to the anisotropy of the $\mathcal{N}_{\alpha,M}$-ball $B_1^{\az,M}$, the two parameters $\lz_1$ and $\lz_2$ that we encounter   are in fact mutually  independent. Unfortunately, this aspect seems to have been overlooked by  \cite[]{GNT15} in their proof for (2.5) of  \cite[Lemma 2.5]{GNT15}, which is essential to \cite[Theorem (1)]{GNT15}. Yet there is a gap as $\lz_2/\lz_1\to\infty$; for example, on  \cite[p. 2216]{GNT15} the integral on $[0,\dz]$ is bounded above by $|w|^{-1/2}\dz^{d+1/2}$, but the later choice  $\dz=|w|^{-1} $ therein is not enough to yield   \cite[(2.5)]{GNT15}.
In fact, we find it  necessary to distinguish these cases. And a practical division is that:  $\lz_1\gsim \lz_2$ and $\lz_1\lsim \lz_2$, for which we shall use  \eqref{ff101} and  \eqref{XBde2}  respectively. In this process, the second case leads to an oscillatory integral  of   type  \eqref{OI1}, but with the  following new phase
\begin{gather} \label{pha2} 
    \psi_\lambda^{\pm}(r):=\lambda_2 r\pm\lambda_1\left(1-r^{\frac{\alpha}{2}}\right)^{\frac{1}{\alpha}}, \qquad r\in(0,1).
\end{gather}   
The benefit of this new expression we adopt is that it possesses an analytic structures analogous to those in the $\lz_1\gsim \lz_2$ case.

In Subsection  \ref{phAnl}, the above procedure will be made elaborately. Through these efforts, we conclude that it is possible to not only extend the spectral estimates of  \cite[]{GNT15}  for the ball $B_1^{\az}$ in Heisenberg groups to a larger framework, but also enhance  the key \cite[Lemma 2.3-2.5]{GNT15}  in both the range of $\az$ and the spectral decay rate, as in Lemmas  \ref{lem24}-\ref{lem25}. 

Now we recall the following weighted version of the van der Corput Lemma, which can be found in  \cite[\S VIII.1.2]{Ste93}, and a slight improvement has been made here; for a proof, one uses $F(x):=\int_x^b e^{i\lz\phi(t)} dt$ on the top of \cite[p. 334]{Ste93} instead. 
\begin{lemma}[van der Corput] \label{vand} 
Let $k\in\nn^*,\lz>0$ and the real-valued function $\phi\in C^\infty(a, b)$  satisfy that $\left|\phi^{(k)}(r)\right| \geq 1$ for all $r \in(a, b)$. If (i) $k\ge2$, or (ii) $k=1$ with $\phi'$  monotonic, then
\begin{equation*} 
    \left|\int_a^b e^{i \lambda \phi(r)} \psi(r) d r\right| \leq c_k \lambda^{-\frac{1}{k}}\left[\min\{|\psi(a)|,|\psi(b)|\}+\int_a^b\left|\psi^{\prime}(r)\right| d r\right],
\end{equation*}
where $c_k$ depends only on $k$, say  $c_k=5 \cdot 2^{k-1}-2$.
\end{lemma}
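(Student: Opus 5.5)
The plan is to prove first the unweighted estimate
\begin{equation*}
\left|\int_c^d e^{i\lambda\phi(r)}\,dr\right|\le c_k\lambda^{-\frac1k},\qquad c_k=5\cdot 2^{k-1}-2,
\end{equation*}
uniformly over all subintervals $(c,d)\subset(a,b)$. The weight $\psi$ is then brought in by a single integration by parts against a primitive of $e^{i\lambda\phi}$. Since $\phi$ is only assumed smooth on the open interval, I will work on $[a+\eta,b-\eta]$ throughout and let $\eta\to0^+$ at the end. The hypotheses hold on every subinterval, and the right-hand side behaves well in this limit, with $\psi(a),\psi(b)$ understood as one-sided limits when they exist.

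\emph{Step 1: the case $k=1$.} Write $e^{i\lambda\phi}=\frac{1}{i\lambda\phi'}\frac{d}{dr}e^{i\lambda\phi}$ and integrate by parts. The two boundary terms contribute at most $2/\lambda$, because $|\phi'|\ge1$. The remaining integral is bounded by $\lambda^{-1}\int_c^d|(1/\phi')'|\,dr$. Since $\phi'$ is monotonic and does not vanish, $1/\phi'$ is monotonic, so this integral equals $\lambda^{-1}|1/\phi'(d)-1/\phi'(c)|\le\lambda^{-1}$. This gives $c_1=3$.

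\emph{Step 2: induction on $k\ge2$.} Assume $|\phi^{(k)}|\ge1$; then $\phi^{(k-1)}$ is strictly monotone. There is therefore a point $r_0\in[c,d]$ where $|\phi^{(k-1)}|$ is minimal, and $|\phi^{(k-1)}(r)|\ge\delta$ whenever $|r-r_0|\ge\delta$. On $(r_0-\delta,r_0+\delta)\cap(c,d)$ I bound the integral trivially by $2\delta$. The complement consists of at most two intervals. On each of them, $\phi/\delta$ has $(k-1)$-th derivative of modulus at least $1$, so the inductive hypothesis, applied with $\lambda\delta$ in place of $\lambda$, gives at most $c_{k-1}(\lambda\delta)^{-1/(k-1)}$. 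For $k-1=1$ the required monotonicity of the first derivative follows from $\phi''\ne0$.

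Choosing $\delta=\lambda^{-1/k}$ yields the bound $(2c_{k-1}+2)\lambda^{-1/k}$. The recursion $c_k=2c_{k-1}+2$ with $c_1=3$ gives exactly $c_k=5\cdot2^{k-1}-2$; for instance $c_2=8$.

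\emph{Step 3: inserting the weight.} Set $F(x):=\int_x^b e^{i\lambda\phi(t)}\,dt$, so that $F'=-e^{i\lambda\phi}$ and $F(b)=0$. Integrating by parts,
\begin{equation*}
\int_a^b e^{i\lambda\phi}\psi\,dr=F(a)\psi(a)+\int_a^b F(r)\psi'(r)\,dr.
\end{equation*}
By Steps 1 and 2, $|F(x)|\le c_k\lambda^{-1/k}$ for every $x$, so the right-hand side is at most $c_k\lambda^{-1/k}\bigl[|\psi(a)|+\int_a^b|\psi'|\bigr]$.

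Repeating the argument with $G(x):=\int_a^x e^{i\lambda\phi}$ instead, which vanishes at $a$, gives the same bound with $|\psi(b)|$ in place of $|\psi(a)|$. Taking the better of the two estimates gives the $\min\{|\psi(a)|,|\psi(b)|\}$ form claimed. This choice of primitive, anchored at one endpoint rather than $\int_a^x$ alone, is exactly the slight improvement over \cite[\S VIII.1.2]{Ste93} mentioned in the statement.

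The only delicate point is the bookkeeping in Step 2. The sublevel interval around $r_0$ may be truncated by the endpoints, and one must check that each remaining piece satisfies the hypotheses of the lower-order case, including monotonicity of $\phi'$ when $k-1=1$. Neither point affects the constant, and the rest of the argument is routine.
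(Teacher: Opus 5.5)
Your proposal is correct and matches the paper's proof. The paper simply defers to Stein's argument in \cite[\S VIII.1.2]{Ste93}: the unweighted bound by induction with $c_1=3$ and $c_k=2c_{k-1}+2$, followed by integration by parts against a primitive of $e^{i\lambda\phi}$, with the primitive $F(x)=\int_x^b e^{i\lambda\phi(t)}\,dt$ used in place of Stein's $\int_a^x$ to obtain the $|\psi(a)|$ version, which combined with Stein's original $|\psi(b)|$ version gives the stated minimum.
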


\subsection{Analysis of the phase functions}  \label{phAnl} 
For $\az>0$ we let
$$
\mathbf{C_\az}:=\begin{cases}
2(2-\alpha)^{\frac{2}{\alpha}-1}(\alpha-1)^{1-\frac{1}{\alpha}}, &\mathrm{if}\,\, 1<  \alpha <2,\\
2, &\mathrm{otherwise}. \\
\end{cases}
$$
The function $\phi_\lambda^{+}$ defined in  \eqref{pha1}  satisfies the following properties:
\begin{lemma} \label{cond1} 
Let $\az>0$ and $\lz_1\ge\caz\lz_2>0$.
\begin{enumerate}
\item If $\az\ge2$, then $(\phi_\lambda^{+})'$ is  decreasing
 and there exists $r_*=r_*(\az,\lz) $ such that
\begin{equation*}
    (\phi_\lambda^{+})'(r)\ge\frac{\lz_1}{2}\,\, \mathrm{on}\,\, (0,r_*), \qquad   |(\phi_\lambda^{+})''(r)|\ge\frac{\lz_1}{2}\,\, \mathrm{on}\,\, (r_*,1).
\end{equation*}

\item If $0<\az\le1$, then $(\phi_\lambda^{+})'$ is  increasing
 and with the same $r_*$ as in (i),
\begin{equation*}
    (\phi_\lambda^{+})'(r)\ge\frac{\lz_1}{2}\,\, \mathrm{on}\,\, (r_*,1), \qquad   |(\phi_\lambda^{+})''(r)|\ge\frac{\lz_1}{2}\,\, \mathrm{on}\,\, (0,r_*).
\end{equation*}

\item If $1<\az<2$, then there are three points $r_0'<r_0<r_0''$ located in $(0,1)$ depending only on $\az$ and a constant $D_\az>0$ such that
\begin{gather*}  \label{ff111} 
    (\phi_\lambda^{+})'(r)\ge D_\az{\lz_1} \ \ \mathrm{on}\ \ (0,1), \qquad  \mathrm{if}\ \lz_1\ge2\caz\lz_2, \\[2mm]   \inf_{(0,r_0')\cup(r_0'',1)} (\phi_\lambda^{+})' \ge D_\az{\lz_1}, \quad  \inf_{[r_0',r_0'']}|(\phi_\lambda^{+})'''| \ge D_\az{\lz_1}, \qquad \mathrm{if}\ \lz_1<2\caz\lz_2.
\end{gather*}
Additionally, $(\phi_\lambda^{+})'$ is decreasing on $(0,r_0)$ and increasing on $(r_0,1) $, respectively.
\end{enumerate} 
\end{lemma}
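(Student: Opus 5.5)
The plan is to reduce everything to the elementary function
\[
g(r):=r^{\alpha-1}(1-r^\alpha)^{\frac{2}{\alpha}-1},\qquad r\in(0,1).
\]
Differentiating $\phi_\lambda^{+}$ from \eqref{pha1} gives $(\phi_\lambda^{+})'(r)=\lambda_1-2\lambda_2\, g(r)$ and $(\phi_\lambda^{+})^{(k)}=-2\lambda_2\, g^{(k-1)}$ for $k\ge2$. A direct computation yields
\[
g'(r)=r^{\alpha-2}(1-r^\alpha)^{\frac{2}{\alpha}-2}\big[(\alpha-1)-r^\alpha\big],
\qquad
\frac{g'(r)}{g(r)}=\frac{(\alpha-1)-r^\alpha}{r(1-r^\alpha)} .
\]
All monotonicity claims then follow from the sign of $(\alpha-1)-r^\alpha$. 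For $\alpha\ge2$ this sign is positive, so $g$ increases and $(\phi_\lambda^{+})'$ decreases. For $\alpha\le1$ it is negative, so $g$ decreases and $(\phi_\lambda^{+})'$ increases. For $1<\alpha<2$ it changes sign exactly once, at $r_0:=(\alpha-1)^{1/\alpha}$, where $g$ attains its maximum. One checks that $2g(r_0)=\mathbf{C_\alpha}$, which explains the definition of $\mathbf{C_\alpha}$.

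For (i) and (ii), let $r_*$ be the point where $(\phi_\lambda^{+})'(r_*)=\lambda_1/2$, i.e.\ $2\lambda_2 g(r_*)=\lambda_1/2$. If no such point exists in $(0,1)$, take $r_*$ to be the appropriate endpoint. By monotonicity, $(\phi_\lambda^{+})'\ge\lambda_1/2$ holds on the side where $g$ is small: $(0,r_*)$ if $\alpha\ge2$, and $(r_*,1)$ if $\alpha\le1$. On the complementary side we have $2\lambda_2 g\ge\lambda_1/2$, hence
\[
|(\phi_\lambda^{+})''|=2\lambda_2|g'|=2\lambda_2 g\,\frac{|g'|}{g}\ge\frac{\lambda_1}{2}\cdot\frac{|g'|}{g}.
\]
So it suffices to show $|g'|/g\ge1$ on $(0,1)$. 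For $\alpha\ge2$ this holds because $\frac{(\alpha-1)-r^\alpha}{r(1-r^\alpha)}\ge\frac{1-r^\alpha}{1-r^\alpha}=1$. For $\alpha\le1$ it holds because $\frac{r^\alpha+1-\alpha}{r(1-r^\alpha)}\ge\frac1r\ge1$. The hypothesis $\lambda_1\ge2\lambda_2$ is only used to make $r_*$ well placed; the borderline cases $\alpha=1,2$, where $g$ stays bounded, are checked the same way.

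For (iii), let $1<\alpha<2$. Since $2g\le\mathbf{C_\alpha}$, we get $(\phi_\lambda^{+})'\ge\lambda_1-\mathbf{C_\alpha}\lambda_2\,\frac{2g}{\mathbf{C_\alpha}}$. If $\lambda_1\ge2\mathbf{C_\alpha}\lambda_2$, this immediately gives $(\phi_\lambda^{+})'\ge\lambda_1/2$.

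Otherwise $\lambda_2>\lambda_1/(2\mathbf{C_\alpha})$, and we use $\mathbf{C_\alpha}\lambda_2\le\lambda_1$ to get $(\phi_\lambda^{+})'\ge\lambda_1\big(1-\tfrac{2g}{\mathbf{C_\alpha}}\big)$. Next compute $g''(r_0)$. Since $g'(r_0)=0$, only the derivative of the bracket survives, giving
\[
g''(r_0)=-\alpha\, r_0^{2\alpha-3}(1-r_0^\alpha)^{\frac2\alpha-2}<0 .
\]
By continuity, choose $r_0'<r_0<r_0''$, depending only on $\alpha$, with $|g''|\ge c_\alpha$ on $[r_0',r_0'']$. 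There $|(\phi_\lambda^{+})'''|=2\lambda_2|g''|\ge c_\alpha\lambda_1/\mathbf{C_\alpha}$. Outside this interval, $g$ is bounded away from its strict maximum $g(r_0)$, because $g$ is unimodal, so $1-2g/\mathbf{C_\alpha}\ge\delta_\alpha>0$. Taking $D_\alpha:=\min\{1/2,\delta_\alpha,c_\alpha/\mathbf{C_\alpha}\}$ finishes the argument.

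The only delicate point is the order of choices in (iii). The interval $[r_0',r_0'']$ must be fixed first, from the nondegeneracy $g''(r_0)\neq0$, and only afterwards is $\delta_\alpha$ extracted from the unimodality of $g$. In this way all constants depend only on $\alpha$, uniformly in $\lambda$.
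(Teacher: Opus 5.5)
Your proof is correct and is essentially the paper's argument. It uses the same $r_*$ from \eqref{r*} and the same lower bounds on $|g'|/g$ for (i)--(ii). For (iii) it uses the nonvanishing of $(\phi_\lambda^{+})'''(r_0)$ at $r_0=(\alpha-1)^{1/\alpha}$ together with the gap $\max\{2g(r_0'),2g(r_0'')\}<\mathbf{C_\alpha}$, which is exactly the paper's $C_\alpha'$.

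There is one slip to fix. For $\alpha\le1$, the inequality $\frac{r^\alpha+1-\alpha}{r(1-r^\alpha)}\ge\frac1r$ is false for small $r$, since it amounts to $2r^\alpha\ge\alpha$. Using $r^\alpha+1-\alpha\ge r^\alpha$ instead gives $\frac{|g'|}{g}\ge\frac{r^{\alpha-1}}{1-r^\alpha}\ge1$, which is all you need and is precisely the fact the paper invokes.
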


For $\psi_\lambda^{+}$ given in  \eqref{pha2}, similar property remains valid if $\az\notin(1,2)$, but when $\az\in(1,2)$  the situation behaves somewhat more degenerate.
\begin{lemma} \label{cond2} 
Let $\az>0$ and $0<\lz_1<\caz\lz_2$. 
\begin{enumerate}
\item If $\az\ge2$, then $(\psi_\lambda^{+})'$ is  decreasing
 and there exists $R_*=R_*(\az,\lz)$ such that
\begin{equation*}
    (\psi_\lambda^{+})'(r)\ge\frac{\lz_2}{2}\,\, \mathrm{on}\,\, (0,R_*), \qquad   |(\psi_\lambda^{+})''(r)| \ge \frac{\lz_2}4  \,\, \mathrm{on}\,\, (R_*,1).
\end{equation*}

\item If $0<\az\le1$, then $(\psi_\lambda^{+})'$ is  increasing
 and with the same $R_*$ as in (i),
\begin{equation*}
    (\psi_\lambda^{+})'(r)\ge\frac{\lz_2}{2}\,\, \mathrm{on}\,\, (R_*,1), \qquad   |(\psi_\lambda^{+})''(r)|\ge\frac{\lz_2}{4}\,\, \mathrm{on}\,\, (0,R_*).
\end{equation*}

\item If $1<\az<2$, then there are points $0<R_1\le R_0'<R_0<R_0''\le R_2<1$, with $R_0'$, $R_0$, $R_0''$ depending only on $\az$,  and a constant $D_\az'>0$ such that
\begin{gather*}
   \inf_{[R_1,R_2]}|(\psi_\lambda^{+})'| \ge D_\az'{\lz_2}, \quad \inf_{(0,R_1)\cup(R_2,1)}|(\psi_\lambda^{+})''| \ge D_\az'{\lz_2} \qquad \mathrm{if}\ \lz_2\ge 2\caz^{-1}\lz_1,\\[2mm]    \inf_{(0,R_0')\cup(R_0'',1)} |(\psi_\lambda^{+})''| \ge D_\az'{\lz_2}, \quad \inf_{[R_0',R_0'']}|(\psi_\lambda^{+})'''| \ge D_\az'{\lz_2}, \qquad \mathrm{if}\ \lz_2<2\caz^{-1}\lz_1.
\end{gather*}
Additionally, $(\psi_\lambda^{+})'$ is increasing on $(0,R_0)$ and decreasing on $(R_0,1) $ respectively.
\end{enumerate} 
\end{lemma}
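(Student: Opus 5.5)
The plan is to reduce everything to the elementary function obtained from $\psi_\lambda^{+}$ by differentiation, and then read off monotonicity and lower bounds from its logarithmic derivative. Writing $u=r^{\alpha/2}$, one computes
\begin{equation*}
(\psi_\lambda^{+})'(r)=\lambda_2-\frac{\lambda_1}{2}h(r),\qquad h(r):=r^{\frac{\alpha}{2}-1}\left(1-r^{\frac{\alpha}{2}}\right)^{\frac1\alpha-1}=u^{\frac{\alpha-2}{\alpha}}(1-u)^{\frac{1-\alpha}{\alpha}},
\end{equation*}
so $(\psi_\lambda^{+})''=-\frac{\lambda_1}{2}h'$ and $(\psi_\lambda^{+})'''=-\frac{\lambda_1}{2}h''$. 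A direct computation gives
\begin{equation*}
\frac{d}{dr}\log h(r)=\frac{\alpha-2+u}{2r(1-u)} .
\end{equation*}
Hence $h$ is increasing for $\alpha\ge2$ and decreasing for $0<\alpha\le1$. For $1<\alpha<2$, $h$ decreases on $(0,R_0)$ and increases on $(R_0,1)$, where $R_0:=(2-\alpha)^{2/\alpha}$. Its minimum value is
\begin{equation*}
h(R_0)=(2-\alpha)^{\frac{\alpha-2}{\alpha}}(\alpha-1)^{\frac{1-\alpha}{\alpha}}=\frac{2}{\mathbf{C_\alpha}} .
\end{equation*}
This explains the constant $\mathbf{C_\alpha}$, and it gives all the monotonicity claims for $(\psi_\lambda^{+})'$. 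In particular the hypothesis $\lambda_1<\mathbf{C_\alpha}\lambda_2$ says exactly that $(\psi_\lambda^{+})'(R_0)>0$, and likewise, for $\alpha\notin(1,2)$, that $\lambda_1 h/2<\lambda_2$ at the endpoint where $h$ is smallest.

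For (i) and (ii), I define $R_*$ by $(\psi_\lambda^{+})'(R_*)=\lambda_2/2$, i.e. $h(R_*)=\lambda_2/\lambda_1$, with the convention $R_*=0$ or $1$ if no solution exists. The bound $(\psi_\lambda^{+})'\ge\lambda_2/2$ on $(0,R_*)$, resp. $(R_*,1)$, is then immediate from monotonicity. On the complementary interval we have $h\ge\lambda_2/\lambda_1$, so
\begin{equation*}
|(\psi_\lambda^{+})''|=\frac{\lambda_1}{2}\,h\,\left|\frac{d}{dr}\log h\right|\ge\frac{\lambda_2}{2}\left|\frac{\alpha-2+u}{2r(1-u)}\right| .
\end{equation*}
It remains to bound the last factor from below by $1/2$ there. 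For $\alpha\ge2$, I would use $r\le1$, $(1-u)^{-1}\ge1$, and the fact that the region $h\ge\lambda_2/\lambda_1>1/2$ keeps $u$ away from $0$ when $\alpha>2$. For $\alpha\le1$, I would use $|\alpha-2+u|\ge1-u$. If the clean constant $1/4$ is not reached this way, I would settle for some $c_\alpha\lambda_2$, which suffices for the later application.

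For (iii), $1<\alpha<2$, I would first treat the near-degenerate regime $\lambda_2<2\mathbf{C_\alpha}^{-1}\lambda_1$, where $\lambda_1\sim_\alpha\lambda_2$. Since $h''(R_0)>0$ is a constant depending only on $\alpha$ ($R_0$ is a nondegenerate minimum), continuity gives $R_0'<R_0<R_0''$, depending only on $\alpha$, with $h''\ge h''(R_0)/2$ on $[R_0',R_0'']$. This yields $|(\psi_\lambda^{+})'''|\gtrsim\lambda_1\gtrsim\lambda_2$ there. Outside this interval, $|h'|\ge c_\alpha>0$, since $h'$ vanishes only at $R_0$ and $|h'|\to\infty$ at both ends, so $|(\psi_\lambda^{+})''|\gtrsim\lambda_2$.

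In the regime $\lambda_2\ge2\mathbf{C_\alpha}^{-1}\lambda_1$ we have $(\psi_\lambda^{+})'(R_0)=\lambda_2-\lambda_1/\mathbf{C_\alpha}\ge\lambda_2/2$. I would take $R_1\le R_0'$ and $R_2\ge R_0''$ to be the points where $(\psi_\lambda^{+})'$ drops to $\lambda_2/4$, truncated at $R_0'$ and $R_0''$ respectively so that the interval $[R_1,R_2]$ contains $[R_0',R_0'']$. On $[R_1,R_2]$, unimodality gives $(\psi_\lambda^{+})'\ge\min\{\lambda_2/4,\ (\psi_\lambda^{+})'(R_0'),\ (\psi_\lambda^{+})'(R_0'')\}$. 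The two values $(\psi_\lambda^{+})'(R_0'),(\psi_\lambda^{+})'(R_0'')$ are $\gtrsim\lambda_2$ once $R_0',R_0''$ are taken close enough to $R_0$ that $h(R_0'),h(R_0'')\le\frac32h(R_0)$: then $\frac{\lambda_1}{2}h\le\frac34\lambda_2$, so $(\psi_\lambda^{+})'\ge\lambda_2/4$ there. On $(0,R_1)\cup(R_2,1)$, outside $[R_0',R_0'']$, we have $|h'|\ge c_\alpha$. Combining this with $\lambda_1h\ge\frac32\lambda_2$ on these pieces and the logarithmic-derivative identity gives $|(\psi_\lambda^{+})''|\gtrsim\lambda_2$.

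I expect the main obstacle to be this last uniformity: the lower bound must be $\gtrsim\lambda_2$ even when $\lambda_1\ll\lambda_2$, and then $R_1\to0$, $R_2\to1$. This forces the use of $h\,|(\log h)'|$ rather than $|h'|$ alone, exploiting that $h\ge\lambda_2/\lambda_1$ on the end pieces. Near $r=0$, where $h\sim r^{\alpha/2-1}$, and near $r=1$, where $h\sim(1-u)^{(1-\alpha)/\alpha}$, the factor $|(\log h)'|$ is itself large, so the product is $\gtrsim\lambda_2$. The same device settles the constants in (i) and (ii).
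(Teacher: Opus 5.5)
Your proof is correct and follows essentially the paper's route. You use the same $R_*$ (where $(\psi_\lambda^{+})'=\lambda_2/2$), the same critical point $R_0=(2-\alpha)^{2/\alpha}$ with $h(R_0)=2/\mathbf{C_\alpha}$, and the same two-regime split in (iii) with continuity of the third derivative near $R_0$. On the end pieces you also use the same mechanism: $\lambda_1 h\gtrsim\lambda_2$ combined with $|\alpha-2+r^{\alpha/2}|\gtrsim_\alpha 1$ away from $R_0$. Your factorization $h'=h\,(\log h)'$ is a tidier form of the paper's step of dropping $1/(r(1-r^{\alpha/2}))\ge 1$ and evaluating at $R_1,R_2$. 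Choosing $R_0',R_0''$ to satisfy both of your closeness conditions at once is slightly cleaner than the paper's separate choices in the two regimes.

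The one loose end is the explicit constant $\lambda_2/4$ in (i). Bounding $1/r\ge 1$ and keeping $u$ away from $0$ does not give it in general for $2\le\alpha<3$, and it gives nothing near $r=0$ when $\alpha=2$. The constant follows from the identity $\frac{d}{dr}\frac{\alpha-2+r^{\alpha/2}}{r}=\frac{(\frac{\alpha}{2}-1)(r^{\alpha/2}-2)}{r^{2}}\le 0$. Hence $\frac{\alpha-2+r^{\alpha/2}}{r}\ge\alpha-1\ge 1$, so $(\log h)'\ge\frac12$ on $(0,1)$ and $|(\psi_\lambda^{+})''|\ge\frac{\lambda_2}{4}$ on $(R_*,1)$. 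This is exactly the paper's remark $r^{-1}(\alpha-2+r^{\alpha/2})\ge\frac{\alpha}{2}$.
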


Recall $\phi_\lambda^{-},\,\psi_\lambda^{-}$ defined in \eqref{pha1}-\eqref{pha2}. We have the better properties:
\begin{lemma} \label{cond3} 
Let $\lz>0$. Then 
\begin{equation*}
    (\phi_\lambda^{-})'(r)\ge\lz_1, \quad (\psi_\lambda^{-})'(r)\ge\lambda_2, \qquad \fall r\in(0,1).
\end{equation*}
Moreover, if  $\az\in(0,1]\cup[2,\infty)$, then both $(\phi_\lambda^{-})'$ and $(\psi_\lambda^{-})'$ are monotonic on $(0,1)$; if  $\az\in(1,2)$, then $(\phi_\lambda^{-})'$ is monotonic on $(0,(\az-1)^{1/\az})$ and $((\az-1)^{1/\az},1)$, and so is $(\psi_\lambda^{-})'$ with $(\az-1)^{1/\az}$ replaced by $(2-\az)^{2/\az}$.
\end{lemma}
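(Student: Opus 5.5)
The plan is a direct computation. Both derivatives split as the linear term plus a nonnegative term, which gives the lower bounds; the sign of each second derivative then reduces to comparing a single power of $r$ with a constant, which gives the monotonicity. No oscillatory or Bessel input is needed.

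\emph{Lower bounds.} For $r\in(0,1)$, differentiating \eqref{pha1} and \eqref{pha2} gives
\begin{equation*}
(\phi_\lambda^{-})'(r)=\lambda_1+2\lambda_2\, r^{\alpha-1}(1-r^\alpha)^{\frac{2}{\alpha}-1},\qquad
(\psi_\lambda^{-})'(r)=\lambda_2+\frac{\lambda_1}{2}\, r^{\frac{\alpha}{2}-1}\bigl(1-r^{\frac{\alpha}{2}}\bigr)^{\frac{1}{\alpha}-1}.
\end{equation*}
The second term in each is nonnegative because $\lambda_1,\lambda_2>0$ and $0<r<1$. This gives $(\phi_\lambda^{-})'\ge\lambda_1$ and $(\psi_\lambda^{-})'\ge\lambda_2$ immediately.

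\emph{Monotonicity of $(\phi_\lambda^{-})'$.} I will differentiate once more and factor the result:
\begin{equation*}
(\phi_\lambda^{-})''(r)=2\lambda_2\, r^{\alpha-2}(1-r^\alpha)^{\frac{2}{\alpha}-2}\bigl[(\alpha-1)(1-r^\alpha)-(2-\alpha)r^\alpha\bigr]
=2\lambda_2\, r^{\alpha-2}(1-r^\alpha)^{\frac{2}{\alpha}-2}\bigl[(\alpha-1)-r^\alpha\bigr].
\end{equation*}
The prefactor is positive, so the sign of $(\phi_\lambda^{-})''$ is the sign of $(\alpha-1)-r^\alpha$.
\begin{itemize}
\item If $\alpha\ge2$, then $\alpha-1\ge1>r^\alpha$, so $(\phi_\lambda^{-})''>0$ and $(\phi_\lambda^{-})'$ is increasing.
\item If $\alpha\le1$, then $\alpha-1\le0<r^\alpha$, so $(\phi_\lambda^{-})''<0$ and $(\phi_\lambda^{-})'$ is decreasing.
\item If $1<\alpha<2$, the bracket changes sign exactly once, at $r=(\alpha-1)^{1/\alpha}\in(0,1)$. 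Hence $(\phi_\lambda^{-})'$ is monotone on each of the two subintervals determined by this point.
\end{itemize}

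\emph{Monotonicity of $(\psi_\lambda^{-})'$.} Put $\beta=\alpha/2$ and $\gamma=\frac1\alpha-1$, and differentiate $g(r)=r^{\beta-1}(1-r^\beta)^{\gamma}$:
\begin{equation*}
g'(r)=r^{\beta-2}(1-r^\beta)^{\gamma-1}\bigl[(\beta-1)(1-r^\beta)-\gamma\beta\, r^\beta\bigr].
\end{equation*}
Since $\gamma\beta=\frac12-\frac\alpha2$, the bracket collapses to $\frac12\bigl[(\alpha-2)+r^{\alpha/2}\bigr]$. So the sign of $(\psi_\lambda^{-})''$ is the sign of $r^{\alpha/2}-(2-\alpha)$.
\begin{itemize}
\item If $\alpha\ge2$, this is positive, so $(\psi_\lambda^{-})'$ is increasing.
\item If $\alpha\le1$, then $2-\alpha\ge1>r^{\alpha/2}$, so it is negative and $(\psi_\lambda^{-})'$ is decreasing.
\item If $1<\alpha<2$, the sign changes only at $r=(2-\alpha)^{2/\alpha}\in(0,1)$. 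This gives monotonicity on each side of that point, as claimed.
\end{itemize}

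\emph{Expected obstacle.} The only step needing care is the algebraic simplification of the brackets. In particular, for $\psi$ the $\alpha$-dependent terms must cancel correctly to leave the clean threshold $(2-\alpha)^{2/\alpha}$. Everything else is elementary sign-checking.
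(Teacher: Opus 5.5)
Your proposal is correct and matches the paper's route. The paper omits the details, saying only that the lemma follows from the derivative computations in the proofs of Lemmas \ref{cond1} and \ref{cond2}. Since $(\phi_\lambda^{-})''$ and $(\psi_\lambda^{-})''$ differ from $(\phi_\lambda^{+})''$ and $(\psi_\lambda^{+})''$ only by an overall sign, your sign factors $\alpha-1-r^\alpha$ and $\alpha-2+r^{\alpha/2}$ are exactly the ones computed there, and your case analysis checks out.
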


{\bf Proof of Lemma \ref{cond1}.} A straightforward computation gives:
\begin{gather*}
  (\phi_\lz^{+})'(r)=\lambda_1-2 \lambda_2\left(1-r^\alpha\right)^{\frac{2}{\alpha}-1} r^{\alpha-1},\\
   (\phi_\lz^{+})''(r) = -2 \lambda_2\left(1-r^\alpha\right)^{\frac{2}{\alpha}-2} r^{\alpha-2}(\alpha-1-r ^\alpha) , \\
   (\phi_\lz^{+})'''(r)  = -2\lz_2 \left(1-r^\alpha\right)^{\frac{2}{\alpha}-3} r^{\alpha-3} [(\az-1-r^\az)(\az-2+\az r^\az)-\az(1-r^\az) r^\az  ]. 
\end{gather*}
When $\az\in(0,1]\cup[2,\infty)$,  the monotonicity of $(\phi_\lz^{+})'$ is obvious. Let $r_*=r_*(\az,\lz)\in(0,1)$ be the unique solution of the equation
\begin{equation} \label{r*} 
    \frac{\lambda_1}{4 \lambda_2}=\left(1-r^\alpha\right)^{\frac{2}{\alpha}-1} r^{\alpha-1}
\end{equation}
for $\az\in(0,1)\cup(2,\infty)$. While for $\az=1$,  $r_*$ still denotes the unique solution of  \eqref{r*} if  $\lz_1<4\lz_2$ and equals $0$ otherwise; for $\az=2$, $r_*$ still denotes the unique solution of  \eqref{r*} if  $\lz_1<4\lz_2$ and equals $1$ otherwise.

For $\az\ge2$, we have 
$$\left(\phi_\lambda^{+}\right)^{\prime}(r) \geq\left(\phi_\lambda^{+}\right)^{\prime}\left(r_*\right) \ge \frac{\lambda_1}{2}, \qquad\fall r\in\left(0, r_*\right) .$$
On $(r_*,1) $, since the function $r\mapsto \left(1-r^\alpha\right)^{\frac{2}{\az}-1} r^{\alpha-1}$ is increasing and $\az-1- r^\az \ge 1-r^\az$, 
then
$$\left|\left(\phi_\lambda^{+}\right)^{\prime \prime}(r)\right| \geq\left. 2 \lambda_2\left(1-r^\alpha\right)^{\frac{2}{\az}-1} r^{\alpha-1}\right|_{r=r_*}=\frac{\lambda_1}{2} .$$
For $\az\le1$, the rest conclusions of (ii) can be proved in a similar manner, with the simple fact that $r^\az+1-\az\ge r^\az$.  

When $\az\in(1,2)$, we let $r_0=(\alpha-1)^{\frac{1}{\alpha}} \in(0,1 )$, which is the unique stationary point of $ \left(\phi_\lambda^{+}\right)^{ \prime}$ on $(0,1) $. This implies the  monotonic properties in (iii) trivially. By calculations we have  
\begin{equation*}
    \left(\phi_\lambda^{+}\right)^{\prime}(r_0) =\lz_1-\caz\lz_2\ge0, \qquad \left(\phi_\lambda^{+}\right)^{\prime \prime\prime}(r_0) = 4\lz_2 (2-\alpha)^{\frac{2}{\alpha}-2}(\alpha-1)^{\frac{2 \alpha-3}{\alpha}} \alpha >0.
\end{equation*}
Hence there is an interval $(r_0',r_0'')\subset (0,1)$ containing $r_0$
and depending only on $\az$ such that
\begin{equation}  \label{ffl1} 
    \left(\phi_\lambda^{+}\right)^{\prime\prime \prime}(r)/\lz_2\sim_\az 1, \qquad \mbox{on}\ (r_0',r_0''),
\end{equation} 
due to the continuity of $ \left(\phi_\lambda^{+}\right)^{\prime\prime \prime}.$ 
We denote 
\begin{equation*}
    C_\alpha^{\prime}:= \lz_2^{-1}\max \left\{\lambda_1-\left(\phi_\lambda^{+}\right)^{\prime}\left(r_0^{\prime}\right), \lambda_1-\left(\phi_\lambda^{+}\right)^{\prime}\left(r_0^{\prime \prime}\right)\right\}>0.
\end{equation*}
It follows from the last assertion in (iii)  that $C_\az'<\caz$. And it is simple to verify    that $C_\az'$ indeed depends only on $\az$.  There are two possible cases: If $\lambda_1-\caz \lambda_2 \geq \frac12 \lambda_1,$ then $\left(\phi_\lambda^{+}\right)^{\prime}(r) \geq\left(\phi_\lambda^{+}\right)^{\prime}\left(r_0\right) \geq \frac12 \lambda_1$ on $(0,1)$. Conversely, if $\lambda_1-\caz \lambda_2 < \frac12 \lambda_1$ (so $\lz_1\sim_\az\lz_2$), then by the  definition of $C_\az'$ and the monotonic properties of $\left(\phi_\lambda^{+}\right)^{\prime } $ we obtain   
\begin{equation*}
    \left(\phi_\lz^{+}\right)^{\prime}(r) \ge \lambda_1-C_\alpha^{\prime} \lambda_2 \ge \lambda_1-\frac{C_\alpha'}{\caz} \lambda_1\sim_\az \lambda_1 ,\quad \fall r\in\left(0, r_0^{\prime}\right) \cup\left(r_0^{\prime \prime}, 1\right).
\end{equation*}
Lastly, by  \eqref{ffl1} it yields that $
    |\left(\phi_\lambda^{+}\right)^{\prime \prime\prime}(r)| \sim_\az \lz_2 \sim_\az \lz_1 $ on $(r_0',r_0'')$.
These lead to (iii). Lemma  \ref{cond1} is therefore proved. 

{\bf Proof of Lemma \ref{cond2}.} We calculate that 
\begin{gather*}
    \left(\psi_\lambda^{+}\right)^{\prime} (r)=\lambda_2-\frac{1}{2} \lambda_1\left(1-r^{\frac{\alpha}{2}}\right)^{\frac{1}{\alpha}-1} r^{\frac{\alpha}{2}-1},\\
    \left(\psi_\lambda^{+}\right)^{\prime\prime}(r) = -\frac{1}{4} \lambda_1\left(1-r^{\frac{\alpha}{2}}\right)^{\frac{1}{\alpha}-2} r^{\frac{\alpha}{2}-2}\left(\alpha-2+r^{\frac{\alpha}{2}}\right) ,\\
    \left(\psi_\lambda^{+}\right)^{\prime\prime\prime}(r) = -\frac{1}{4}\lz_1 r^{\frac{\alpha}{2}-3}\left(1-r^{\frac{\alpha}{2}}\right)^{\frac{1}{\alpha}-3}\left[\left(\frac{\az}2-2-(\az-2)r^{\frac{\az}2}\right)\left(\alpha-2+r^{\frac{\alpha}{2}}\right)+\frac{\alpha}{2} r^{\frac{\alpha}{2}}\left(1-r^{\frac{\alpha}{2}}\right)\right].
\end{gather*}
When $\az\in(0,1]\cup[2,\infty)$, the desired conclusions can be proved similarly as in  Lemma  \ref{cond1} (i)-(ii). We only mention some  cruxes  here: Firstly, the point we use is that $R_*=R_*(\az,\lz)\in(0,1)$, which is defined by the unique solution of the equation 
\begin{equation} \label{R*} 
    \frac{\lambda_2}{\lambda_1}= \left(1-r^\frac{\az}{2}\right)^{\frac{1}{\az}-1} r^{\frac{\alpha}2-1}
\end{equation}
for $\az\in(0,1)\cup(2,\infty)$; while for $\az=1 $, $R_*$ is still defined by  \eqref{R*} if $\lz_2>\lz_1$ and equals $1$ otherwise; for $\az=2 $, $R_*$ is still defined by  \eqref{R*} if $\lz_2>\lz_1$ and equals $0$ otherwise.
Secondly, for $\az\ge2$ it holds that $r^{-1}\left(\alpha-2+r^{\frac{\alpha}{2}}\right) \ge \frac{\az}{2}, \,\fall r\in(0,1)$. Thirdly, for $\az\le1$, one has $\left(1-r^{\frac{\alpha}{2}}\right)^{-1}\left(2-\alpha-r^{\frac{\alpha}{2}}\right) \geq 1 , \,\fall r\in(0,1)$.

When $\az\in(1,2)$, put $R_0=(2-\alpha)^{\frac{2}{\alpha}} \in(0,1 )$, the unique stationary point of $ \left(\psi_\lambda^{+}\right)^{ \prime}$ on $(0,1) $. Then  the  monotonic properties in (iii) hold by our calculations of the derivatives. It is easily seen that the equation $ \left(\psi_\lambda^{+}\right)^{\prime}(r) = \frac12 \left(\psi_\lambda^{+}\right)^{\prime}(R_0)$ or 
\begin{equation} \label{ffl3} 
    \frac{\lambda_2}{  \lambda_1} + \frac1{\caz}=\left(1-r^\frac{\az}{2}\right)^{\frac{1}{\az}-1} r^{\frac{\alpha}2-1}
\end{equation}
 has exactly two solutions $R_j=R_j(\az,\lz), j=1,2$ in $(0,1)$ with $R_1<R_0<R_2$. On the other hand, we get via a direct computation that
\begin{equation*} \label{f11} 
    \left(\psi_\lambda^{+}\right)^{\prime}(R_0) = \lambda_2-\frac{1}{\caz} \lambda_1>0 , \qquad -\left(\psi_\lambda^{+}\right)^{\prime\prime\prime}(R_0) =  \frac{\lz_1}{8} {\alpha}(\alpha-1)(2-\alpha)^{2-\frac{6}{\alpha}}>0 .
\end{equation*}
Now we  meet two situations: If $\lambda_2-\caz^{-1} \lambda_1 \geq \frac12 \lambda_2$ (so $\lz_2/\lz_1\ge2/\caz$), then
\begin{equation} \label{12R1} 
    \left(\psi_\lambda^{+}\right)^{\prime}(r) \geq \min\{\left(\psi_\lambda^{+}\right)^{\prime}\left(R_1\right),\left(\psi_\lambda^{+}\right)^{\prime}\left(R_2\right) \} =\frac12  \left(\psi_\lambda^{+}\right)^{\prime}(R_0) \ge \frac14\lz_2, \quad \fall r\in(R_1,R_2).
\end{equation}
We denote by $R_0'=R_0'(\az)$ and $R_0''=R_0''(\az)$ the two  solutions of  \eqref{ffl3} with LHS replaced by $2.5/\caz$.  Then we must have that $R_1<R_0'<R_0<R_0''<R_2 $.
Since the function $r \longmapsto\left(1-r^{\frac{\alpha}{2}}\right)^{\frac{1}{\alpha}-1} r^{\frac{\alpha}{2}-1}$ is decreasing on $(0,R_1)$ and increasing on  $(R_2,1) $, respectively, which is a direct consequence of the last assertion in (iii), then by  \eqref{ffl3}  we have, for all $r\in (0,R_1)\cup(R_2,1)$
\begin{align*}
    \left|\left(\psi_\lambda^{+}\right)^{\prime \prime}(r)\right| & =\frac{\lambda_1}{4}\left(1-r^{\frac{\alpha}{2}}\right)^{\frac{1}{\alpha}-2} r^{\frac{\alpha}{2}-2}\left|r^{\frac{\az}{2}}+\alpha-2\right| \nonumber\\
    & \ge \frac{\lambda_1}{4} \min_{r\in\{R_1,R_2\}} \left(1-r^{\frac{\alpha}{2}}\right)^{\frac{1}{\alpha}-1} r^{\frac{\alpha}{2}-1} \left|r^ \frac{\alpha}{2}+\alpha-2\right| \nonumber\\
    & \ge\frac{\lambda_2}{4}  \min_{r\in\{R_0',R_0''\}}\left|r^ \frac{\alpha}{2}+\alpha-2\right|  \sim_\az \lambda_2 . 
\end{align*}
 Conversely, if  $\lambda_2-\caz^{-1} \lambda_1 < \frac12 \lambda_2$ (so $\lz_1\sim_\az\lz_2$), from the continuity of the function $-\left(\psi_\lambda^{+}\right)^{\prime\prime \prime}(\cdot)/\lz_1$ we can choose $R_0'=R_0'(\az)$ and $R_0''=R_0''(\az)$ properly, such that $R_0\in(R_0',R_0'') $ and 
 \begin{equation}  \label{ffl2} 
     -\left(\psi_\lambda^{+}\right)^{\prime\prime \prime}(r)/\lz_1\sim_\az 1, \qquad \mathrm{for\ all}\,\, r\in [R_0',R_0''].
 \end{equation}
For later use and  notational convenience,  we also define
\begin{equation} \label{ffa1} 
    R_1:=R_0', \quad R_2:=R_0''.
\end{equation}
Fixing our choices, then for $\az\in(1,2)$ we obtain
\begin{equation*}
    |\left(\psi_\lz^{+}\right)^{\prime\prime} (r)| \ge  \frac{\lambda_1}{4} \min_{r\in\{R_0',R_0''\}} \left(1-r^{\frac{\alpha}{2}}\right)^{\frac{1}{\alpha}-1} r^{\frac{\alpha}{2}-1} \left|r^ \frac{\alpha}{2}+\alpha-2\right|  \sim_\az \lambda_2, \quad \fall r\in (0,R_0')\cup(R_0'',1).
\end{equation*}
These estimates imply (iii) and hence the proof of Lemma   \ref{cond2} is completed.  

{\bf Proof of Lemma \ref{cond3}.} This lemma can be proved in a simpler way and by some slight modifications of the partial proofs of Lemmas  \ref{cond1} and  \ref{cond2}. So we shall omit the details.

\subsection{Proof of  \eqref{ff18}}  \label{prb3a} 
We will divide the discussion into the following two cases: {Case (i): $\lz_1\ge \caz\lz_2$} and {Case (ii): $\lz_1 < \caz\lz_2$}.

\subsubsection{Proof of Case (i): $\lz_1\ge \caz\lz_2$} \label{prfM} 
Note that we now have $\lz_1\sim_\az |\lz|$, which will be used implicitly in what follows. Applying integration by parts with  \eqref{ff12} we have 
\begin{equation*}
    \begin{aligned}
    \cJ(\lambda) & =\lambda_1^{-\frac{q}{2}-1} \int_0^1\left[\left(\lambda_1 r\right)^{\frac{q}{2}} J_{\frac{q}{2}}\left(\lambda_1 r\right)\right]' J_{\frac{m}{2}}\left(\lambda_2\left(1-r^\alpha\right)^{\frac{2}{\alpha}}\right)\left[\left(1-r^\alpha\right)^{\frac{2}{\alpha}}\right]^{\frac{m}{2}} d r \\
    & =2 \lambda_1^{-1} \lambda_2 \int_0^1 J_{\frac{q}{2}}\left(\lambda_1 r\right) J_{\frac{m}{2}-1}\left(\lambda_2\left(1-r^\alpha\right)^{\frac{2}{\alpha}}\right)\left(1-r^\alpha\right)^{\frac{m+2}{\alpha}-1} r^{\frac{q}{2}+\alpha-1} d r \\
    & =2 \lambda_1^{-1} \lambda_2 \int_0^1 \cdots(1-\varphi) d r+ 2 \lambda_1^{-1} \lambda_2\int_0^1 \cdots \varphi d r \\
    &=: 2 \sum_{k=1}^{2} \cJ_k(\lambda) ,
    \end{aligned}
\end{equation*}
where $\vhi$ is defined by  \eqref{vhi} as before. In view of this, we merely have to show that 
\begin{equation*}
    |\cJ_1(\lz)|\lsim |\lz|^{-1-\sz},\qquad |\cJ_2(\lz)| \lsim |\lz|^{-1-\sz}
\end{equation*} 
holds uniformly for large $|\lz|$.

{\bf Estimation of $\cJ_1$.} Before going  further, we remark that the methods in  this part actually provide a paradigm, which will be  transplanted to other situations. Core portions in the  proof of  \eqref{ff19} are also performed in such a way, with  more delicate discussions. 

Now we write 
\begin{align*}
  \cJ_1(\lz) &=  \lambda_1^{-1} \lambda_2 \int_0^1 J_{\frac{q}{2}}\left(\lambda_1 r\right) J_{\frac{m}{2}-1}\left(\lambda_2\left(1-r^\alpha\right)^{\frac{2}{\alpha}}\right)\left(1-r^\alpha\right)^{\frac{m+2}{\alpha}-1} r^{\frac{q}{2}+\alpha-1} (1-\vhi) d r \\
  &= \int_0^\dz\cdots + \int_\dz^1 \cdots \\
  &=:\cJ_{1,1}(\lz) + \cJ_{1,2}(\lz).
\end{align*}
Here $\dz=\lambda_1^{-\frac{1}{q+2 \alpha-1}} \in(\lz_1^{-1},1)$. The reason of this choice will be clear soon.

For  $\cJ_{1,1}$, from  \eqref{ff05}-\eqref{ff06}  we see that, for all $r \in\left[0, 1/2\right]$,
\begin{equation*}
    \left|J_{\frac{q}{2}}\left(\lambda_1 r\right)\right| \lesssim\left\{\begin{array}{ll}
    \left(\lambda_1 r\right)^{\frac{q}{2}}, & r \le \lambda_1^{-1} , \\
    \left(\lambda_1 r\right)^{-\frac{1}{2}}, & r \geq \lambda_1^{-1} ,
    \end{array}
    \qquad \left|J_{ \frac{m}{2}-1} \left(\lambda_2\left(1-r^\alpha\right)^{\frac{2}{\alpha}}\right)\right| \lsim \lambda_2^{-\frac{1}{2}}.\right.
\end{equation*}
Then by the fact that $\supp (1-\vhi)\cap[0,1]\subset [0,1/2]$ it yields 
\begin{equation*}
    \begin{aligned}
    \left|\cJ_{1,1}(\lambda)\right| & \lesssim \lambda_1^{-1}  \lambda_2 \left(\lambda_1^{\frac{q}{2}} \lambda_2^{-\frac{1}{2}} \int_0^{\lambda_1^{-1}} r^ {q+\alpha-1} dr + \lambda_1^{-\frac{1}{2}} \lambda_2^{-\frac{1}{2}} \int_{\lambda_1^{-1}}^\delta r^{\frac{q}{2}+\alpha-\frac{3}{2}} d r\right) \\
    & \lesssim \lambda_1^{-1} \lambda_2\left(\lambda_2^{-\frac{1}{2}} \lambda_1^{-\frac{q}{2}-\alpha}+ \lambda_1^{-\frac{1}{2}} \lambda_2^{-\frac{1}{2}} \delta^{\frac{q+2 \alpha-1}{2}}\right)  \\
    & \lesssim \lz_1^{-\frac32} \sim |\lz|^{-\frac32}. \end{aligned}
\end{equation*}
Note that we have chosen $\dz$ as above to balance the quantities in the second the line.

For  $\cJ_{1,2}$, since $\supp  \vhi\cap[0,1]\subset [1/4,1]$, by our choice we have  $\lz_1\dz\gsim1$, whence one can substitute the expansion \eqref{ff11} into its formula and obtain that
\begin{equation*}
    \begin{aligned}
    |\cJ_{1,2}(\lz)| &\lsim  \lambda_1^{-1} \lambda_2\cdot
      \lambda_1^{-\frac{1}{2}} \lambda_2^{-\frac{1}{2}}\left[\sum_{ \pm}\left|\int_\delta^1 e^{i \phi_\lambda^{\pm}(r)} r^{\frac{q}{2}+\alpha-\frac{3}{2}}\left(1-r^\alpha\right)^{\frac{m+1}{\alpha}-1}(1-\varphi) d r\right|\right. \\
    &\qquad+\lambda_1^{-1} \int_\delta^1 r^{\frac{q}{2}+\alpha-\frac{5}{2}}\left(1-r^\alpha\right)^{\frac{m+1}{\alpha}-1}(1-\varphi) d r \\
    &\qquad\left.+\lambda_2^{-1} \int_\delta^1 r^{\frac{q}{2}+\alpha-\frac{3}{2}}\left(1-r^\alpha\right)^{\frac{m-1}{\alpha}-1}(1-\varphi) d r\right].
    \end{aligned}
\end{equation*}
The third term in the bracket is seen to be controlled by $\lz_2^{-1}$; while the second can be bounded by $ \lz_1^{-1/2}$, since 
\begin{align*}
    \int_\delta^1 r^{\frac{q}{2}+\alpha-\frac{5}{2}} d r \lsim \begin{cases}1, &\mbox{as}\ \alpha>\frac{3-q}{2}, \\ \lambda_1^{\frac{1}{2}-\alpha} \log   \lambda_1, &\mbox{as}\ q=2,  \alpha \leq \frac{1}{2} .\end{cases}
\end{align*}

Turning to the first term (designated by $FI(\lz)$), we will employ  Lemma  \ref{vand}  of  van der Corput  extensively, and so as  to improve the integrability of the amplitude in some singular case, the skill of integration by parts (with the differential property \eqref{ff12} of Bessel functions) will also play an important part.

Note that $\phi_\lambda^{-}$ behaves better than $\phi_\lambda^{+}$ as in  Lemma \ref{cond3}, subsequently we will merely deduce the case with the latter phase. Once this is done, the former case can be proved similarly and more simply. This step will be proceeded repeatedly by default in such situation throughout Subsection \ref{prb3a}, without extra explanation. Similar convention will be adopted to Subsection \ref{prb3b}.

If $\az\ge \frac{3-q}{2}$, according to Lemmas  \ref{vand} and   \ref{cond1}  we have
\begin{equation} \label{FI1} 
    FI(\lz) \lesssim \lambda_1^{-\sigma}\left(1+\int_\delta^1\left|\left[r^{\frac{q}{2}+\alpha-\frac{3}{2}}\left(1-r^\alpha\right)^{\frac{m+1}{\alpha}-1}(1-\varphi)\right]^{\prime}\right| d r\right),
\end{equation}
where $\sz$ is given by  \eqref{sig}. Then $FI(\lz)\lsim \lambda_1^{-\sigma} $. Otherwise, it must hold that $\az< \frac{3-q}{2}$  or equivalently,  $q=2$ and $\az<1/2$ (since $q\ge2$ as explained in Subsection  \ref{S2G}). This can be split into two possible cases: $\dz\ge r_*$ and $\dz< r_*$, where $r_*$ is defined by  \eqref{r*}. Note that $\sz=1/2$ at this moment.

\textit{(1) Let $q=2, \az<1/2$ and $\dz\ge r_* $.} By Lemma \ref{cond1} (ii), the decay $\lambda_1^{-\sigma} $ in  \eqref{FI1} can be improved to $\lz_1^{-1}$. However,
\begin{equation*}
    \int_\delta^1\left|\left[r^{\frac{q}{2}+\alpha-\frac{3}{2}}\left(1-r^\alpha\right)^{\frac{m+1}{\alpha}-1}(1-\varphi)\right]^{\prime}\right| d r \lesssim \int_\delta^1 r^{\alpha-\frac{3}{2}} d r \sim \delta^{\alpha-\frac{1}{2}} \le \lambda_1^{\frac{1}{2}-\alpha},
\end{equation*}
which implies that $FI(\lz)\lsim \lambda_1^{-\frac{1}{2}}$. Through these estimates we arrive at that $|\cJ_{1,2}(\lz)| \lsim |\lz|^{-1-\sz}$.

\textit{(2) Let $q=2, \az<1/2$ and $\dz< r_* $.}   In this case, by  \eqref{r*} one has  $ r_* \sim\left(\lambda_1 \lambda_2^{-1}\right)^{-\frac{1}{1-\alpha}}$, from which, the condition $\dz< r_* $ readily means that
\begin{equation} \label{FI2} 
    \lambda_1 \lsim \lambda_2^ {\frac{1+2 \alpha}{3 \alpha}}.
\end{equation}
Additionally, we need a new expression for $\cJ_{1,2}$.
Using the recursion formula  \eqref{ff12}  we obtain
\begin{equation*}
    \begin{aligned}
    \cJ_{1,2}(\lambda) & = -\frac{1}{2} \lambda_1^{-1} \int_\delta^1\left[J_{\frac{m}{2}}\left(\lambda_2\left(1-r^\alpha\right)^{\frac{2}{\alpha}}\right)\left(1-r^\alpha\right)^{\frac{m}{\alpha}}\right]^{\prime} J_{\frac{q}{2}}\left(\lambda_1 r\right) r^{\frac{q}{2}}(1-\varphi) d r \\
    &= \frac{1}{2} \lambda_1^{-1} J_{\frac{m}{2}}\left(\lambda_2\left(1-\delta^\alpha\right)^{\frac{2}{\alpha}}\right)\left(1-\delta^\alpha\right)^{\frac{m}{\alpha}} J_{\frac{q}{2}}\left(\lambda_1 \delta\right) \delta^{\frac{q}{2}}(1-\varphi(\delta)) \\
    &\qquad -\frac{1}{2} \lambda_1^{-1} \int_\delta^1 J_{\frac{m}{2}}\left(\lambda_2\left(1-r^\alpha\right)^{\frac{2}{\alpha}}\right)\left(1-r^\alpha\right)^{\frac{m}{\alpha}} J_{\frac{q}{2}}\left(\lambda_1 r\right) r^{\frac{q}{2}} \varphi' d r \\
    &\qquad +\frac{1}{2} \int_\delta^1 J_{\frac{m}{2}}\left(\lambda_2\left(1-r^\alpha\right)^{\frac{2}{\alpha}}\right)\left(1-r^\alpha\right)^{\frac{m}{\alpha}} J_{\frac{q}{2}-1}\left(\lambda_1 r\right) r^{\frac{q}{2}}(1-\varphi) d r.
    \end{aligned}
\end{equation*}
The first resulting term has the upper bound $ \lambda_1^{-1} \lambda_2^{-\frac{1}{2}} \delta^{\frac{q}{2}} \cdot\left(\lambda_1 \delta\right)^{-\frac{1}{2}} \sim \lambda_1^{-\frac{3}{2}} \delta^{\frac{q-1}{2}} .$ Since $\operatorname{supp} \varphi^{\prime} \subseteq\left[\frac{1}{4}, \frac{1}{2}\right]$, then the second term can be controlled by $$ \lambda_1^{-1} \int_{\frac{1}{4}}^{\frac{1}{2}} \lambda_1^{-\frac{1}{2}} \lambda_2^{-\frac{1}{2}} d r \lesssim \lambda_1^{-\frac{3}{2}} \lambda_2^{-\frac{1}{2}} .$$
Regarding the third  term (denoted by $T_3(\lz)$), plugging  \eqref{ff11} into it (with $1$-term and $N$-term expansions for $J_{\frac{q}{2}-1}\left(\lambda_1 r\right)$ and $J_{\frac{m}{2}}(\lambda_2\left(1-r^\alpha\right)^{2/\az})$ respectively) produces that
\begin{equation*}
    \begin{aligned}
    |T_3(\lz)|&\lsim  \lambda_1^{-\frac{1}{2}} \lambda_2^{-\frac{1}{2}} \left[\sum_{\pm} \sum_{j=0}^{N-1}\left|\int_\delta^1 e^{i \phi_\lambda^{\pm}(r)} r^{\frac{q-1}{2}}\left(1-r^\alpha\right)^{\frac{m-1}{\alpha}}\left(\lambda_2\left(1-r^\alpha\right)^{\frac{2}{\alpha}}\right)^{-j}(1-\varphi) d r\right|\right. \\
    &\qquad+  \left.\int_\dz^1 r^{\frac{q-1}{2}}\left(1-r^\alpha\right)^{\frac{m-3}{\alpha}}\left(\lambda_2\left(1-r^\alpha\right)^{\frac{2}{\alpha}}\right)^{-N}(1-\varphi) d r \,\right. \\
    &\qquad \left.+\lambda_1^{-1} \int_\dz^1 r^{\frac{q-3}{2}}\left(1-r^\alpha\right)^{\frac{m-1}{\alpha}}(1-\varphi) d r\right] \\
    &=:\sum_{j=1}^3 T_{3,j}(\lz)
    \end{aligned}
\end{equation*}
where $N\ge2$ is to be chosen. Then by  \eqref{FI2} and fixing an integer $N>\frac{1+2\az}{3\az}$   we have
\begin{gather*}
     T_{3,2}(\lz)\lsim \lambda_1^{-\frac{1}{2}} \lambda_2^{-\frac{1}{2}}\cdot \lambda_2^{-N} \int_\delta^1 r^{\frac{q-1}{2}} d r \lesssim \lz_1^{-\frac12}\lambda_2^{-N}\lsim \lambda_1^{-\frac32}, \\
    T_{3,3}(\lz)\lsim \lambda_1^{-\frac{1}{2}} \lambda_2^{-\frac{1}{2}}\cdot \lambda_1^{-1} \int_\delta^1 r^{\frac{q-3}{2}} d r \lesssim \lambda_1^{-\frac32}.
\end{gather*} 
For $T_{3,1}(\lz)$, since $\dz<r_*$, the integral inside is written as 
$$\int_\dz^1\cdots= \int_{r_*}^1\cdots + \int_\dz^{r_*}\cdots,$$
whose associated quantities are denoted by $T_{3,1}'$ and $T_{3,1}''$ respectively, so that $T_{3,1}= T_{3,1}'+ T_{3,1}'' $.
For $T_{3,1}'$, one acquires by using again Lemma  \ref{cond1} (ii) that
\begin{align*}
T_{3,1}'(\lz) & \lesssim \lambda_1^{-\frac{1}{2}} \lambda_2^{-\frac{1}{2}}  \sum_{j=0}^{N-1} \lambda_1^{-1} \lambda_2^{-j} \left(1+\int_{r_*}^{1} \left\lvert\,\left[r^{\frac{q-1}{2}}\left(1-r^\alpha\right)^{\frac{m-1}{\alpha}}\left(\left(1-r^\alpha\right)^{\frac{2}{\alpha}}\right)^{-j}(1-\varphi)\right]'\right| d r\right) \nonumber\\
&\lsim \lambda_1^{-\frac{1}{2}} \lambda_2^{-\frac{1}{2}}\sum_{j=0}^{N-1} \lambda_1^{-1} \lambda_2^{-j}\left(1+\int_{r_*}^{1} r^{\frac{q-1}{2}-1} d r\right) \\
&\lesssim \lambda_1^{-\frac32} .  \label{Lff36} 
\end{align*}
For   $T_{3,1}''$,  noting that at present $ r_*^{-1} \sim\left({\lambda_1}{\lambda_2^{-1}}\right)^{\frac{1}{1-\alpha}} \gtrsim {\lambda_1}{\lambda_2^{-1}}$, then the second estimation in Lemma  \ref{cond1} (ii) can be optimized as follows: 
\begin{equation*}
    \left|\left(\phi_{\lambda}^{+}\right)''(r)\right| \geqslant\left.  \lambda_2\left(1-r^\alpha\right)^{\frac{2}{\alpha}-2} r^{ \alpha-1} \cdot r^{-1}\right|_{r=r_*} \gtrsim \lambda_1^2 \lambda_2^{-1}, \quad \fall r\in(0,r_*).
\end{equation*}
Hence from the van der Corput Lemma it follows that
\begin{equation*}
    \begin{aligned}
    T_{3,1}''(\lz) & \lesssim \lambda_1^{-\frac{1}{2}} \lambda_2^{-\frac{1}{2}} \sum_{j=0}^{N-1} \lambda_2^{-j}\left(\lambda_1^2 \lambda_2^{-1}\right)^{-\frac{1}{2}}\left[1+\int_\delta^{r_*} \left\lvert\,\left[r^{\frac{q-1}{2}}\left(1-r^\alpha\right)^{\frac{m-1-2j}{\alpha}}    (1-\varphi)\right]^{\prime} \right\lvert d r\right] \\
    & \lesssim \lambda_1^{-\frac{1}{2}} \lambda_2^{-\frac{1}{2}}\sum_{j=0}^{N-1} \lambda_2^{-j}\left(\lambda_1^2 \lambda_2^{-1}\right)^{-\frac{1}{2}}\left(1+\int_\delta^{r_*} r^{\frac{q-1}{2}-1} d r\right) \\
    & \lesssim \lambda_1^{-\frac{1}{2}} \lambda_2^{-\frac{1}{2}}\sum_{j=0}^{N-1} \lambda_2^{-j}\left(\lambda_1^2 \lambda_2^{-1}\right)^{-\frac{1}{2}} \\
    &\lesssim \lambda_1^{-\frac32}.
    \end{aligned}
\end{equation*} 
Finally, putting these estimates together, we conclude  that $|\cJ_1(\lz)| \lsim |\lz|^{-1-\sz}$ as required.

{\bf Estimation of $\cJ_2$.} The method is analogous to that  of  $\cJ_1$.
We first write 
\begin{align*}
  \cJ_2(\lz) &=  \lambda_1^{-1} \lambda_2 \int_0^1 J_{\frac{q}{2}}\left(\lambda_1 r\right) J_{\frac{m}{2}-1}\left(\lambda_2\left(1-r^\alpha\right)^{\frac{2}{\alpha}}\right)\left(1-r^\alpha\right)^{\frac{m+2}{\alpha}-1} r^{\frac{q}{2}+\alpha-1}  \vhi(r) d r \\
  &= \int_0^{r_\dz}\cdots + \int_{r_\dz}^1 \cdots \\
  &=:\cJ_{2,1}(\lz) + \cJ_{2,2}(\lz),
\end{align*}
with $ \delta=\bar{c}\, \lambda_2^{-\frac{1}{m+1}}<1$ ($\bar{c}<(2\pi c_2)^{1/(m+1)}$ is a  positive constant and $c_2$ is assumed already in Lemma  \ref{lem25}), where we have put $r_\delta=\left(1-\dz^{\frac{\alpha}{2}}\right)^{\frac{1}{\alpha}} \sim_{\az,c_2,m} 1.$
         
For $\cJ_{2,2}$, observing that $ \supp\,\vhi \cap[0,1]\subset [1/4,1]$. Then by the choice of $\dz$ and  \eqref{ff05}-\eqref{ff06} we have
\begin{equation*}
    \begin{aligned}
     \left|\cJ_{2,2}(\lambda)\right| &\lesssim \lambda_1^{-1} \lambda_2\cdot \lambda_1^{-\frac{1}{2}} \int^1_{1-\lambda_2^{-\frac{\alpha}{2}}} \left(\lambda_2(1-r)^{\frac{2}{\alpha}}\right)^{\frac{m}{2}-1}(1-r)^{\frac{m+2}{\alpha}-1} d r \\
    &\qquad  +\lambda_1^{-1} \lambda_2 \cdot \lambda_1^{-\frac{1}{2}}\left|\int_{r_\delta}^{1-\lambda_2^{-\frac{\alpha}{2}}}\left(\lambda_2(1-r)^{\frac{2}{\alpha}}\right)^{-\frac{1}{2}}(1-r)^{\frac{m+2}{\alpha}-1} d r\right|\\
    &\lsim \lz_1^{-\frac32} \sim |\lz|^{-\frac32}.
    \end{aligned}
\end{equation*}

To dominate   $\cJ_{2,1}$, we use  \eqref{ff11} and  Lemma  \ref{vand}  to obtain
\begin{equation*}
    \begin{aligned}
\left|\cJ_{2,1}(\lambda)\right| &\lsim \lambda_1^{-1} \lambda_2\cdot \lambda_1^{-\frac{1}{2}} \lambda_2^{-\frac{1}{2}}\left[\sum_{ \pm}\left|\int_0^{r_\dz} e^{i \phi_\lambda^\pm(r)} r^{\frac{q}{2}+\alpha-\frac{3}{2}}\left(1-r^\alpha\right)^{\frac{m+1}{\alpha}-1} \varphi(r) d r\right|\right. \\
    &\qquad +\lambda_1^{-1} \int_0^{r_\dz} r^{\frac{q}{2}+\alpha-\frac{5}{2}}\left(1-r^\alpha\right)^{\frac{m+1}{\alpha}-1} \varphi(r) d r \\
    &\qquad \left. + \lambda_2^{-1} \int_0^{r_\dz} r^{\frac{q}{2}+\alpha-\frac{3}{2}}\left(1-r^\alpha\right)^{\frac{m-1}{\alpha}-1} \varphi(r) d r\right].
    \end{aligned}
\end{equation*}
The second and third resulting terms in the bracket are well controlled by $ 
\lz_1^{-1}$ and $ 
\lz_2^{-1}\log\lz_2$ respectively. Regarding the first term (denoted by $FI(\lz)$ as before), using Lemma  \ref{vand} again,  
\begin{equation} \label{FI'1} 
    F I(\lz) \lesssim 
     \lambda_1^{-\sigma}\left(1+\int_0^{r_\dz}\left|\left[r^{\frac{q}{2}+\alpha-\frac{3}{2}}\left(1-r^\alpha\right)^{\frac{m+1}{\alpha}-1} \varphi(r)\right]^{\prime}\right| d r\right) .
\end{equation}
As a result, if $\az\le m+1$, then one sees that $F I(\lz) \lesssim \lambda_1^{-\sigma}$, which implies $\left|\cJ_{2,1}(\lambda)\right| \lsim \lambda_1^{-\frac32}  $. As for the rest case that $\az>m+1$ (so $\az>2$ and $\sz=1/2$), there are two possible subcases: $r_\dz\le r_*$  and $ r_\dz>r_*$, to which we next turn. 

\textit{(1) Let $\az> m+1$ and $r_\dz\le r_*$.} Note that
\begin{equation} \label{FI'5} 
    \left(1-r_*^\alpha\right)^{\frac{2}{\alpha}} \sim\left(\frac{\lambda_1}{\lambda_2}\right)^{\frac{2}{2-\alpha}}
\end{equation}
by  \eqref{r*}, then $\lambda_1 \gtrsim \lambda_2^{\frac{\alpha+2 m}{2+2 m}}$. And by Lemma  \ref{cond1} (i), the factor $\lambda_1^{-\sigma}$ in \eqref{FI'1} can be improved to $\lz_1^{-1}$, whence 
$$FI(\lz)\lsim 
\lambda_1^{-1} \delta^{\frac{m+1-\alpha}{2}} \sim  \lambda_1^{-1} \lambda_2^{\frac{\alpha-m-1}{2+2 m}} \lesssim \lambda_2^{-\frac{1}{2}},$$ 
which also leads to the desired estimate for $\cJ_{2,1}(\lz)$.

\textit{(2) Let $\az> m+1$ and $r_\dz> r_*$.} Note also at this point it holds that
\begin{equation} \label{FI'3} 
    \lambda_1 \lsim \lambda_2^{\frac{\alpha+2 m}{2+2 m}}.
\end{equation}
A new expression of $\cJ_{2,1}$ is derived from an integration by parts with  \eqref{ff12}  as follows:
\begin{equation*}
    \begin{aligned}
    \cJ_{2,1}(\lambda)= & \left. -\frac{1}{2} \lambda_1^{-1}\left[J_ {\frac{m}{2}}\left(\lambda_2\left(1-r_\delta^\alpha\right)^{\frac{2}{\alpha}}\right)\left(1-r_\delta^\alpha\right)^{\frac{m}{\alpha}} J_{\frac{q}{2}} \left(\lambda_1 r_\delta\right) r^{\frac{q}{2}} \varphi\right]   \right|_{r=r_\delta} \\
    &\qquad +\frac{1}{2} \lambda_1^{-1} \int_0^{r_\delta} J_{ \frac{m}{2}}\left(\lambda_2\left(1-r^\alpha\right)^{\frac{2}{\alpha}}\right)\left(1-r^\alpha\right)^{\frac{m}{\alpha}} J_{\frac{q}{2}}\left(\lambda_1 r\right) r^{\frac{q}{2}} \varphi' d r \\
    &\qquad +\frac{1}{2} \int_0^{r_\delta} J_{ \frac{m}{2}}\left(\lambda_2\left(1-r^\alpha\right)^{\frac{2}{\alpha}}\right)\left(1-r^\alpha\right)^{\frac{m}{\alpha}} J_{\frac{q}{2}-1}\left(\lambda_1 r\right) r^{\frac{q}{2}} \varphi d r.
    \end{aligned}
\end{equation*}
The first two terms are bounded by $\lz_1^{-\frac32}$ as easily verified. For the third term (denoted by $T_3(\lz)$), applying   \eqref{ff11} with $1$-term and $N$-term expansions for $J_{\frac{q}{2}-1}\left(\lambda_1 r\right)$ and $J_{\frac{m}{2}}(\lambda_2\left(1-r^\alpha\right)^{2/\az})$ respectively gives
\begin{equation*}
    \begin{aligned}
     \left|T_3(\lz)\right| &\lsim  \lambda_1^{-\frac{1}{2}} \lambda_2^{-\frac{1}{2}}\left[\sum_{ \pm} \sum_{j=0}^{{N-1}} \lambda_2^{-j}\left|\int_0^{r_\dz} e^{i \phi_\lambda^{ \pm}(r)} r^{\frac{q}{2}-\frac{1}{2}}\left(1-r^\alpha\right)^{\frac{m-1-2 j}{\alpha}} \varphi(r) d r\right|\right. \\
    &\qquad +\lambda_1^{-1} \int_0^{r_\dz} r^{\frac{q}{2}-\frac{3}{2}}\left(1-r^\alpha\right)^{\frac{m-1}{\alpha}} \varphi(r) d r  \\
    &\qquad \left.+  \int^{r_\dz}_0 r^{\frac{q}{2}-\frac{1}{2}}\left(1-r^\alpha\right)^{\frac{m-3}{\alpha}}\left(\lambda_2\left(1-r^\alpha\right)^{\frac{2}{\alpha}}\right)^{-N} \varphi(r) d r\right] ,
    \end{aligned}
\end{equation*}
where $N\in\nn^*$ is to be chosen. In fact, the second term in the bracket is bounded by $\lz_1^{-1}$; by fixing $N>  \frac{\alpha+2 m}{1+ m}$ and using  \eqref{FI'3} we see the last one can be majorized by $ \lambda_2^{-\frac12 N} \lesssim \lambda_1^{-1} .$ Thus, one can reduce matters to showing that
\begin{equation*}
   \left|\int_0^{r_\dz} e^{i \phi_\lambda^{ \pm}(r)} r^{\frac{q}{2}-\frac{1}{2}}\left(1-r^\alpha\right)^{\frac{m-1-2 j}{\alpha}} \varphi(r) d r\right| \lsim \lz_1^{-1} \lz_2^{\frac{j}2+\frac12}.
\end{equation*}
This can be done as follows. Firstly, note that $$\left(1-r^\alpha\right)^{\frac{2}{\alpha}}\ge\dz\gsim \lz_2^{-\frac12}, \quad \mbox{for all}\ r\in(0,r_\dz).$$
Secondly, split the integral into two parts: $(0,r_*)$ and $(r_*,r_\dz)$. Lastly, for $(0,r_*)$ part we employ  Lemmas  \ref{vand} and \ref{cond1} (i); while for $(r_*,r_\dz)$ part, we use again Lemma  \ref{vand} with the  following fact (by  \eqref{FI'5})
\begin{equation*}
    \left|\left(\phi_\lambda^{+}\right)^{\prime \prime}(r) \right| \geq\left. \left(2(\az-2) \lambda_2\left(1-r^\alpha\right)^{\frac{2}{\az}-1} r^{\alpha-1}\cdot \left(1-r^\alpha\right)^{-1} \right)\right|_{r=r_*}=\frac{\az-2}{2} \lambda_1 \cdot (1-r_*^\az)^{-1}\gsim \lz_1^2\lz_2^{-1}.
\end{equation*}
which is an improvement of the second estimation in Lemma  \ref{cond1} (i). As a result, we conclude that $\cJ_2(\lz)\lsim |\lz|^{-1-\sz}$ as required, and the proof of \textbf{Case (i)} is finished.
 
\subsubsection{Proof of Case (ii): $\lz_1 < \caz\lz_2$} \label{SSS} 
Note that $\lz_2 \sim_\az |\lz|$ and recall  \eqref{XBde2}. Following a similar line of
reasoning,  one can examine under the condition where $\az+m\ge2$, it holds that $| \cK(\lz) |\lsim |\lz|^{-1-\sz} $. We  sketch the main steps, and the details will be provided only when major differences arise.

\textit{Step 1:} Performing integration by parts with  \eqref{ff12} we have 
\begin{align}
    \cK(\lambda) & =\frac{1}{2} \lambda_2^{-1} \lambda_1 \int_0^1 J_{\frac{m}{2}}\left(\lambda_2 r\right) J_{ \frac{q}{2}-1}\left(\lambda_1\left(1-r^{\frac{\alpha}{2}}\right)^{\frac{1}{\alpha}}\right)\left(1-r^{\frac{\alpha}{2}}\right)^{\frac{q+2}{2 \alpha}-1} r^{\frac{\alpha+m}{2}-1} d r \nonumber\\
    & =\int_0^1 \cdots(1-\varphi) d r+\int_0^1 \cdots \varphi d r \nonumber\\
    &=: \cK_1(\lambda)+ \cK_2(\lambda)  \label{K1new} .
\end{align}
 Thus, it remains to show that
 \begin{equation*}  
     |\cK_1(\lambda)|\lsim \lz_2^{-1-\sz},\qquad |\cK_2(\lambda)|\lsim \lz_2^{-1-\sz}.
 \end{equation*}
 
\textit{Step 2:} To dominate  $\cK_1$, we split the integral into two parts 
\begin{equation}\label{K12new} 
    \cK_1(\lambda)=\int_0^\delta\cdots+\int_\delta^1=: \cK_{1,1}(\lz)+ \cK_{1,2}(\lz) 
\end{equation}
with $ \delta=\underline{c}\, \lambda_2^{-\frac{1}{\az+m-1}}<1$, where $\underline{c}<(2\pi c_2)^{1/(\az+m-1)}$ is a  positive constant and $c_2$ is assumed as in Lemma  \ref{lem25}. The estimation of $\cK_{1,1}$ can also be established by  \eqref{ff05}-\eqref{ff06} immediately, where the condition $\az+m\ge2$ takes effect. In fact,
\begin{align} 
    \left|\cK_{1, 1}(\lambda)\right| & \lesssim \lambda_2^{-1} \lambda_1 \cdot \lambda_1^{-\frac{1}{2}} \left(\int_0^{\lambda_2^{-1}} \lambda_2^{\frac{m}{2}} r^{\frac{m}{2}} r^{\frac{\alpha+m}{2}-1} d r+\lambda_2^{-\frac{1}{2}} \int_{\lambda_2^{-1}}^\delta r^{\frac{\alpha+m}{2}-\frac{3}{2}} d r\right) \nonumber\\
    & \lesssim \lambda_2^{-1}  \lambda_1^{\frac{1}{2}}\left(\lambda_2^{\frac{m}{2}} \lambda_2^{-\frac{\alpha+2 m}{2}}+\lambda_2^{-\frac{1}{2}} \cdot \delta^{\frac{\alpha+m-1}{2}}\right)  \label{K11E} \\
    &\lesssim \lambda_2^{-\frac{3}{2}}.\nonumber
\end{align}

In order to assess $\cK_{1,2}$,  we use  \eqref{ff11}  and obtain
\begin{equation*}
    \begin{aligned}
    \left|\cK_{1 , 2}(\lambda)\right| &\lesssim \lambda_2^{-1} \lambda_1 \cdot \lambda_1^{-\frac{1}{2}} \lambda_2^{-\frac{1}{2}}\left[\sum_\pm\left|\int_\delta^1 e^{i \psi_\lz^{ \pm}(r)}\left(1-r^{\frac{\alpha}{2}}\right)^{\frac{q+1}{2 \alpha}-1} r^{\frac{\alpha+m}{2}-\frac{3}{2}}(1-\varphi) d r\right|\right. \\
    &\qquad+\lambda_2^{-1} \int_\delta^1\left(1-r^{\frac{\alpha}{2}}\right)^{\frac{q+1}{2 \alpha}-1} r^{\frac{\alpha+m}{2}-\frac{5}{2}}(1-\varphi) d r \\
    &\qquad\left.+\lambda_1^{-1} \int_\delta^1\left(1-r^{\frac{\alpha}{2}}\right)^{\frac{q-1}{2 \alpha}-1} r^{\frac{\alpha+m}{2}-\frac{3}{2}}(1-\varphi) d r\right].
    \end{aligned}
\end{equation*}
The sum of second and third terms can be bounded by $\lz_2^{-\frac32}\lz_1^\frac12 (\lz_2^{-\frac12} + \lz_1^{-1}) \lsim \lz_2^{-\frac32}.$ For the first term (denoted by $FI(\lz)$), from the van der Corput Lemma with Lemma \ref{cond2} it follows that
\begin{equation} \label{f90} 
    FI(\lz)\lsim \lz_2^{-\frac32}\lz_1^\frac12 \lz_2^{-\sz} \left(\one\{\az+m\ge3\} + \dz^{\frac{\az+m-3}{2}}\one\{\az+m<3\} \right).
\end{equation}
Thus the case where $\az+m\ge3 $ is proved (using the fact that $\lz_2\gsim\lz_1$). The main difficulty lies in the remaining case that $2\le \az+m <3$. If $\az\in(0,1]$ then the desired conclusion can be drawn from a similar argument as in the estimation of $\cJ_{1,2}$ in Subsubsection \ref{prfM}, with the roles of $r_*$ replaced by $R_*$ defined as in  \eqref{R*}, and  of Lemma  \ref{cond1} (ii) replaced by Lemma \ref{cond2} (ii), respectively.  

Now focus on the case $\az\in(1,2)$. Note that $m=1$ and $\sz=1/3$. Recall the numbers $R_0'$ and $R_0''$ defined in the proof of  Lemma  \ref{cond2}, which depend only on $\az$. Then we redefine $\vhi$ given as in  \eqref{vhi} with $\frac14,\frac12$ replaced by $R_0', R_0''$ respectively, since the new points depend only on $\az$. It is obvious that the above bounds are still valid, whence we are only left with $\cK_{1,2}$ as well. Note also that $\supp (1-\vhi)\cap[0,1]\subset [0,R_0'']$, and that $R_j (j=1,2)$ defined by  \eqref{ffl3} and  \eqref{ffa1} are subject to $0<R_1\le R_0'<R_0<R_0''\le R_2 <1$ with $R_0',R_0''$ depending only $\az$, and, via a simple calculation,
\begin{equation} \label{f92} 
R_1^{-1} \sim_\az \left({\lambda_2}{\lambda_1^{-1}}\right)^{\frac{2}{2-\alpha}}.
\end{equation}                    If $R_1\le\dz $, then it holds that $\lambda_1 \lsim \lambda_2^{\frac{3 \alpha+2 m-4}{2 \alpha+2 m-2}} $ and so $\lz_2\lz_1^{-1}\to\infty$ as $\lz_2\to\infty$. Consequently,  only the situation  that $\lambda_2-\caz^{-1} \lambda_1 \geq \frac12 \lambda_2$ can happen when $\lz_2$ is sufficiently large, whence by \eqref{12R1} we can refine the decay from $\lz_2^{-\sz}  $ in  \eqref{f90}  to $\lz_2^{-1}  $.  This implies that 
$$FI(\lz)\lsim \lz_2^{-\frac32}\lz_1^\frac12 \lz_2^{-1}\dz^{\frac{\az+m-3}{2}}\lsim \lz_2^{-1-\sz} .$$
If $R_1>\dz $, notice that by  \eqref{f92},
\begin{equation} \label{f94} 
    \lambda_1 \gsim \lambda_2^{\frac{3 \alpha+2 m-4}{2 \alpha+2 m-2}}.
\end{equation}
We obtain from an integration by parts  that
\begin{equation*}
    \begin{aligned}
  \cK_{1,2}(\lz)  = & -\left.\lambda_2^{-1} J_{\frac{q}{2}}\left(\lambda_1\left(1-r^{\frac{\alpha}{2}}\right) ^{\frac{1}{\alpha}}\right)\left(1-r^{\frac{\alpha}{2}}\right)^{\frac{q}{2 \alpha}} J_{\frac{m}{2}}\left(\lambda_2 r\right) r^{\frac{m}{2}}(1-\varphi)\right|_\delta ^1 \\
    &\qquad-  \lambda_2^{-1} \int_\delta^1 J_{\frac{q}{2}}\left(\lambda_1\left(1-r^{\frac{\alpha}{2}}\right)^{\frac{1}{\alpha}}\right)\left(1-r^{\frac{\alpha}{2}}\right)^{\frac{q}{2 \alpha}} J_{\frac{m}{2}}\left(\lambda_2 r\right) r^{\frac{m}{2}} \varphi' d r \\
    &\qquad +\int_\delta^1 J_{\frac{q}{2}}\left(\lambda_1\left(1-r^{\frac{\alpha}{2}}\right)^{\frac{1}{\alpha}}\right)\left(1-r^{\frac{\alpha}{2}}\right)^{\frac{q}{2 \alpha}} J_{\frac{m}{2}-1} \left(\lambda_2 r\right) r^{\frac{m}{2}}(1-\varphi) d r.
    \end{aligned}
\end{equation*}
The first two resulting terms are seen to be controlled by $\lz_2^{-\frac32}$. For the third term (denoted by $TI(\lz)$), from  \eqref{ff11} it follows that, for $N\in\nn^*$ to be chosen,
\begin{equation*}
    \begin{aligned}
    |TI(\lz)| &\lsim  \lambda_1^{-\frac{1}{2}} \lambda_2^{-\frac{1}{2}} \left[\sum_{ \pm} \sum_{j=0}^{N-1}\left|\int_\delta^1 e^{i \psi_\lambda^{ \pm}(r)} r^{\frac{m-1}{2}}\left(1-r^{\frac{\alpha}{2}}\right)^{\frac{q-1}{2 \alpha}}\left(\lambda_1\left(1-r^{\frac{\alpha}{2}}\right)^{\frac{1}{\alpha}}\right)^{-j}(1-\varphi) d r\right|\right. \\
    &\qquad+  \int_\delta^1 r^{\frac{m-1}{2}}\left(1-r^{\frac{\alpha}{2}}\right)^{\frac{q-3}{2 \alpha}}\left(\lambda_1\left(1-r^{\frac{\alpha}{2}}\right)^{\frac{1}{\alpha}}\right)^{-N}(1-\varphi) d r   \\
    &\qquad\left.+\lambda_2^{-1} \int_\delta^1 r^{\frac{m-3}{2}}\left(1-r^{\frac{\alpha}{2}}\right)^ {\frac{q-1}{2 \alpha}}(1-\varphi) d r\right].
    \end{aligned}
\end{equation*}  
The third term is majorized by $\lz_2^{-\frac32}\lz_1^{-1}\log \lz_2 \lsim \lz_2^{-\frac32}$, if we use  \eqref{f94}; while second term is bounded by $\lz_1^{-N} \lsim \lz_2^{-\frac32}$ if we use again \eqref{f94} and choose a large $N$, which is fixed hereafter.

Next we handle the first term, denoted by $TI_1(\lz)$. It is necessary to further divide into two subcases: (I) $\lambda_2-\caz^{-1} \lambda_1 \geq \frac12 \lambda_2$ and (II) $\lambda_2-\caz^{-1} \lambda_1 < \frac12 \lambda_2$. Moreover, we split the nontrivial integration interval of $TI_1(\lz)$ as 
$$[\dz,R_0'']=[\dz,R_1)\cup[R_1,R_0''].$$
In subcase (I), for the second interval the final estimation is the same as in the case $R_1\le\dz $ and thereby done; for the first one, using the improved lower bound (by  \eqref{f92})
\begin{equation*}
    \left|\left(\psi_\lambda^{+}\right)^{\prime \prime}(r)\right|  \ge \left. \frac{\lambda_1}{4}\left(1-r^{\frac{\alpha}{2}}\right)^{\frac{1}{\alpha}-2} r^{\frac{\alpha}{2}-2}\left| r^{\frac{\az}{2}}+\alpha-2\right|\right|_{r=R_1}\gsim \lz_2\,(\lz_2\lz_1^{-1})^{\frac{2}{2-\az}},
\end{equation*}
and via the van der Corput Lemma, we have 
$$TI_1|_{[\dz,R_1)} (\lz) \lsim \lambda_1^{-\frac{1}{2}} \lambda_2^{-\frac{1}{2}}\ (\lz_2\,(\lz_2\lz_1^{-1})^{\frac{2}{2-\az}})^{-\frac12} \lsim \lz_2^{-1-\sz}, $$
by recalling $\sz=1/3$ and $\az<2$.
In subcase (II), noting that $\lz_1\sim_\az\lz_2$ and $R_1=R_0'$, then  by Lemmas  \ref{vand} and  \ref{cond2} (iii)   we obtain $TI_1|_{[R_1,R_0'']} (\lz) \lsim \lambda_1^{-\frac{1}{2}} \lambda_2^{-\frac{1}{2}}\ \lz_2^{-\sz} \sim \lz_2^{-1-\sz} $, as required; while $TI_1|_{[\dz,R_1]} (\lz) \lsim \lambda_1^{-\frac{1}{2}} \lambda_2^{-\frac{1}{2}}\ \lz_2^{-1/2} \lsim \lz_2^{-1-\sz} $ for the same reason. The domination of $\cK_1 $ is therefore finished.                                                                                                                                                                  

\textit{Step 3:} To dominate $\cK_2$, we take $\dz=\tilde{c}\,\lz_1^{-\frac1{q+1}}$ with $\tilde{c}<(2\pi c_1)^{1/(q+1)}$ a  constant and $c_1$  assumed as in Lemma  \ref{lem25}, and let $ r_\delta=\left(1-\delta^\alpha\right)^{\frac{2}{\alpha}}$. Then the integral can be further split  into the following forms:
\begin{equation*}
    \cK_2(\lambda)=\int_0^{r_\delta}\cdots+\int_{r_\delta}^1=: \cK_{2,1}(\lz)+ \cK_{2,2}(\lz). 
\end{equation*}
For $\cK_{2,2}$, we employ  \eqref{ff05}-\eqref{ff06} as before. For $\cK_{2,1}$, we use the van der Corput Lemma with Lemma \ref{cond2}, where the case that $\az>\frac{q+1}{2}$ needs more care. To that end, we can argue as in the estimation of $\cJ_2$ in Subsubsection \ref{prfM} but with $R_*$ and Lemma  \ref{cond2} (i) for $\az\ge2$; whereas for $\az\in(1,2)$, noting that $\az>3/2$ and  $\sz=1/3$, we can use   Lemma  \ref{cond2} (iii)
 instead and follow a similar line of reasoning  as in  \textit{Step 2}. So the details will not be repeated.

\subsection{Proof of \eqref{ff19}}  \label{prb3b} 
We divide the problem into two cases as well: $\lz_1\ge \caz\lz_2$ and $\lz_1 < \caz\lz_2$.
Note that the first case is already contained in  \eqref{ff18}, so only the latter case needs a further discussion, whose proof is actually a slight modification of the foregoing procedure. The difference is that, roughly speaking, under the condition where $\frac{q}{2}+1 \ge \az \ge 3-m$, the amplitude enjoys better regularities,  which will enable us to deduce improved estimates. 

Recall from  \eqref{XBde2} that $\widehat{\chi_{B_1^\alpha}}(w, s) = C_{q,m}^{-1}\,  \lz_1^{-\frac{q}2} \lz_2^{-\frac{m}2+1} \cK(\lz)$. We split $\cK(\lz)$ into two parts as in \eqref{K1new}  and presently  aim to demonstrate that, for sufficiently large $\lz_2 (\sim_\az |\lz|)$,
 \begin{equation}   \label{goalF} 
 |\cK_{1}(\lambda)|\lsim \lz_2^{-\frac32-\sz}\lz_1^\frac12
     \qquad \mbox{and}\qquad  |\cK_2(\lambda)|\lsim \lz_2^{-\frac32-\sz}\lz_1^\frac12,
 \end{equation}
respectively, under the conditions that $\az+m\ge3$ and $\az\le\frac{q}2+1$.

\subsubsection{Estimation of $\cK_1$}
Let $\az+m\ge3$. Retain the notation  as in  \eqref{K12new} subsequently. By  \eqref{K11E} and the assumption we obtain that $|\cK_{1,1}(\lambda)|\lsim \lz_2^{-2}\lz_1^\frac12$, which is better than the desired bound since $\sz\le1/2$. To assess  $\cK_{1,2}$, we use  \eqref{ff11} to find that
\begin{align*}
    \left|\cK_{1,2}(\lambda)\right| &\lsim \lz_2^{-1}\lz_1\cdot \lambda_2^{-\frac{1}{2}} \lambda_1^{-\frac{1}{2}} \left[\sum_{ \pm}\left|\int_\delta^1 e^{i \psi_\lambda^\pm(r)} r^ {\frac{\alpha+m}{2}-\frac{3}{2}}\left(1-r^{\frac{\alpha}{2}}\right)^{\frac{q+1}{2 \alpha}-1}(1-\varphi) d r\right|\right. \\
    &\qquad+\lambda_2^{-1} \int_\delta^1 r^{\frac{\alpha+m}{2}-\frac{5}{2}}\left(1-r^{\frac{\alpha}{2}}\right)^{\frac{q+1}{2 \alpha}-1}(1-\varphi) d r \\
    &\qquad\left.+\lambda_1^{-1} \int_\delta^1 r^{\frac{\alpha+m}{2}-\frac{3}{2}}\left(1-r^{\frac{\alpha}{2}}\right)^{\frac{q-1}{2 \alpha}-1}(1-\varphi) d r\right].
\end{align*}
Then, for $\az+m\ge3$, from Lemmas  \ref{vand} and  \ref{cond2}   it follows  that 
\begin{equation*}
    \left|\cK_{1,2}(\lambda)\right| \lsim \lz_2^{-\frac32}\lz_1^\frac12 \left(\lz_2^{-\sz}+\lz_2^{-\frac12} + \lz_1^{-1} \right).
\end{equation*}
Hence when $\lz_2\le\lz_1^2$ it holds that $ |\cK_1(\lambda)|\lsim \lz_2^{-\frac32-\sz}\lz_1^\frac12$. Consider now  $\lz_2>\lz_1^2$. Performing integration by parts with  \eqref{ff12}  we have 
\begin{align*}
    &2\lz_1^{-1}\lz_2\,\cK_{1,2}(\lambda)=\lambda_2^{-1} J_{\frac{m}{2}+1}\left(\lambda_2 r\right) \left. J_ {\frac{q}{2}-1}\left(\lambda_1\left(1-r^{\frac{\alpha}{2}}\right)^{\frac{1}{\alpha}}\right)\left(1-r^{\frac{\alpha}{2}}\right)^{\frac{q+2}{2 \alpha}-1} r^{\frac{\alpha+m}{2}-1} (1-\varphi)\right|_\delta ^1 \nonumber\\
    &\quad +\lambda_2^{-1} \int_\delta^1 J_{\frac{m}{2}+1}\left(\lambda_2 r\right) J_ {\frac{q}{2}-1}\left(\lambda_1\left(1-r^{\frac{\alpha}{2}}\right)^{\frac{1}{\alpha}}\right)\left(1-r^{\frac{\alpha}{2}}\right)^{\frac{q+2}{2 \alpha}-1} r^{\frac{\alpha+m}{2}-1} \varphi' d r    \\
    & \quad+\frac{1}{2} \lambda_2^{-1} \lambda_1 \int_\delta^1 J_{\frac{m}2+1}\left(\lambda_2 r\right) J_ {\frac{q}{2}-2}\left(\lambda_1\left(1-r^{\frac{\alpha}{2}}\right)^{\frac{1}{\alpha}}\right)\left(1-r^{\frac{\alpha}{2}}\right)^{\frac{q+4}{2 \alpha}-2} r^{\frac{2 \alpha+m}{2}-2} (1-\varphi) d r   \nonumber \\
    & \quad-\lambda_2^{-1} \int_\delta^1 J_{\frac{m}2+1}\left(\lambda_2 r\right) J_ {\frac{q}{2}-1}\left(\lambda_1\left(1-r^{\frac{\alpha}{2}}\right)^{\frac{1}{\alpha}}\right)\left(1-r^{\frac{\alpha}{2}}\right)^{\frac{q+2}{2 \alpha}-2} r^{\frac{\alpha+m}{2}-2}\left(\frac{\alpha}{2}-2+r^{\frac{\alpha}{2}}\right)(1-\varphi) d r. \nonumber
\end{align*}
Note that $\lz_2 r\gsim 1$ on $[\dz,1]$. Thus, using the conditions that $\az+m\ge3 $ and $\lz_2>\lz_1^2$, together with  \eqref{ff06} it is easy to examine that $\left|\cK_{1,2}(\lambda)\right| \lsim \lz_2^{-2}\lz_1^\frac12$. These results therefore lead to the first estimate  in \eqref{goalF}. 

\subsubsection{Estimation of $\cK_2$}
Let $\az\le\frac{q}2+1$. Using an integration by parts with  \eqref{ff12}, we can  write
\begin{align*}
    \cK_2(\lambda) & =\frac{1}{2} \lambda_2^{-1} \lambda_1 \int_0^1 J_{\frac{m}{2}}\left(\lambda_2 r\right) J_{ \frac{q}{2}-1}\left(\lambda_1\left(1-r^{\frac{\alpha}{2}}\right)^{\frac{1}{\alpha}}\right)\left(1-r^{\frac{\alpha}{2}}\right)^{\frac{q+2}{2 \alpha}-1} r^{\frac{\alpha+m}{2}-1} \vhi(r)d r \nonumber\\
    & =\int_0^{r_\dz} \cdots  +\int_{r_\dz}^1 \cdots  d r \\
    &=: \cK_{2,1}(\lambda)+ \cK_{2,2}(\lambda)   .
\end{align*}
Here $ r_\delta=\left(1-\delta^\alpha\right)^{\frac{2}{\alpha}}$ with $\dz={c}_0\,\lz_1^{-\frac{q}{q+1}}$ and ${c}_0<(2\pi c_1)^{q/(q+1)}$ fixed.
We distinguish the following comparison relations: $\lz_2\le\lz_1^q $ and $\lz_2>\lz_1^q $.

{\bf Case (I):  $\lz_2\le\lz_1^q $.} Notice that $\supp  \vhi\cap[0,1]\subset [1/4,1]$. Then through  \eqref{ff05}-\eqref{ff06}  we obtain that
\begin{align*}
    \left|\cK_{2,2}(\lambda)\right| & \lesssim \lz_2^{-1}\lz_1\cdot \lambda_2^{-\frac{1}{2}} \left( \int_{1-\lambda_1^{-\alpha}}^1 \lambda_1^{\frac{q}{2}-1}(1-r)^{\frac{q-2}{2 \alpha}  +\frac{q+2}{2 \alpha} -1}  d r +  \int_{r_\delta}^{1-\lambda_1^{-\alpha}} \lambda_1^{-\frac{1}{2}}(1-r)^{\frac{q+1}{2 \alpha}-1} d r \right) \\
    & \lesssim \lz_2^{-\frac32}\lz_1\cdot (  \lambda_1^{\frac{q}{2}-1} \lambda_1^{-q}+  \lambda_1^{-\frac{1}{2}} \delta^{\frac{q+1}{2}}),
\end{align*}
which shows that $\left|\cK_{2,2}(\lambda)\right| \lsim \lz_2^{-2}\lz_1^\frac12$. For $ \cK_{2,1}$  we use  \eqref{ff11}  and acquire that
\begin{align*}
    \left|\cK_{2,1}(\lambda)\right| &\lsim \lz_2^{-1}\lz_1\cdot {\lambda_2}^{-\frac{1}{2}} \lambda_1^{-\frac{1}{2}}\left[\sum_{ \pm} \sum_{j=0}^{N-1}\left|\int_0^{r_\delta} e^{i \psi_\lambda^\pm(r)} r^{\frac{\alpha+m}{2}-\frac{3}{2}}\left(1-r^{\frac{\alpha}{2}}\right)^{\frac{q+1}{2 \alpha}-1}\left(\lambda_1\left(1-r^{\frac{\alpha}{2}}\right)^{\frac{1}{\alpha}}\right)^{-j} \varphi d r\right|\right. \\
    &\qquad+\lambda_2^{-1} \int_0^{r_\delta} r^{\frac{\alpha+m}{2}-\frac{5}{2}}\left(1-r^{\frac{\alpha}{2}}\right)^{\frac{q+1}{2 \alpha}-1} \varphi d r   \\
    &\qquad\left.+  \int_0^{r_\delta} r^{\frac{\alpha+m}{2}-\frac{3}{2}}\left(1-r^{\frac{\alpha}{2}}\right)^{\frac{q-1}{2 \alpha}-1}\left(\lambda_1\left(1-r^{\frac{\alpha}{2}}\right)^{\frac{1}{\alpha}}\right)^{-N} \varphi d r\right],
\end{align*}
where $N\in\nn^*$ is to be determined later. Note that under our assumption
\begin{equation} \label{LB1} 
    \lambda_1\left(1-r^{\frac{\alpha}{2}}\right)^{\frac{1}{\alpha}}\ge\lambda_1^{\frac{1}{q+1}} \gsim  \lambda_2^{\frac{1}{q^2+q}}, \quad \mbox{for all}\ r\in[0,r_\dz].
\end{equation}
Then invoking Lemmas  \ref{vand} and   \ref{cond2} produces that 
\begin{align} 
    \left|\cK_{2,1}(\lambda)\right| &\lsim \lz_2^{-\frac32}\lz_1^\frac12 \left(\sum_{j=0}^{N-1} \lambda_2^{-\frac{j}{q^2+q}} \left(\chi_{(0,\frac{q+1}{2}]}(\az)+\dz^{\frac{q+1-2\az}{2}}\chi_{(\frac{q+1}{2},\infty)}(\az)\right) \lambda_2^{-\sigma} \right. \nonumber\\
    &\qquad \left. + \lz_2^{-1} +  \lambda_2^{-\frac{N}{q^2+q}} \right),\label{ffF1} 
\end{align}
which, together with the choice that $N=\lfloor q^2+q\rfloor+1$, implies the desired estimate for $ \az\le\frac{q+1}{2}.$ For the remaining case where $ \frac{q+1}{2}<\az\le\frac{q+2}{2}$, we exploit the skills developed in the previous  subsection. And there arise two situations: $\az\ge2$ and $\az<2.$

\textit{(1) Let $\az\ge2$ and $ \frac{q+1}{2}<\az\le\frac{q+2}{2}$.} First note that $\sz=1/2$ at this point. Via  \eqref{R*} one has 
\begin{equation} \label{ff45} 
    \left(1-R_*^\frac{\az}{2}\right)^{\frac{1}{\az}-1} \sim_\az {\lambda_2}{\lambda_1^{-1}}.
\end{equation}
Hence from the proof of Lemma  \ref{cond2} (i) we may conclude that, for all $r\in(R_*,1)$,
\begin{equation} \label{ff24} 
   \left| \left(\psi_\lambda^{+}\right)^{\prime\prime}(r)\right| \ge \frac{1}{4} \lambda_1\left(1-r^{\frac{\alpha}{2}}\right)^{\frac{1}{\alpha}-2} r^{\frac{\alpha}{2}-1} \ge \frac{1}{4} \lambda_2 \left(1-R_*^{\frac{\alpha}{2}}\right)^{-1} \gsim \lz_2\left({\lambda_2}{\lambda_1^{-1}}\right)^{\frac{\az}{\alpha-1}} .
\end{equation}
Next, we further divide the problem into two subcases: $\lz_2\ge\lz_1^2$ and $\lz_2<\lz_1^2$.

Focus on the first subcase $\lz_2\ge\lz_1^2$. If $R_*\ge r_\dz$, then  the $\lz_2^{-\sz}$ decay in  \eqref{ffF1} can be improved to  $\lz_2^{-1}$ by Lemma  \ref{cond2} (i). But in this subcase 
$$ \lambda_2^{-1} \cdot \delta^{\frac{q+1-2 \alpha}{2}}\sim \lz_2^{-1} \lambda_1^{\frac{q(2 \alpha-q-1)}{2(q+1)}} \lsim \lz_2^{-\frac12},$$
whence we obtain $\left|\cK_{2,1}(\lambda)\right| \lsim \lz_2^{-2}\lz_1^\frac12$ by choosing the same $N$ as before; If $R_*< r_\dz$, then we split the integral into two parts, $(0,R_*)$ and $(R_*, r_\dz)$. The first part can be handled by  the previous line of reasoning; for the second part, the decay $\lz_2^{-\sz}$ can be improved to $\lz_2^{-\frac12}(\lz_2\lz_1^{-1})^{-\frac{\az}{2\az-2}}$
if we use the van der Corput Lemma with  \eqref{ff24}. Therefore the 
required estimate follows since it also holds  that
$$  \lz_2^{-\frac12}(\lz_2\lz_1^{-1})^{-\frac{\az}{2\az-2}} \cdot \delta^{\frac{q+1-2 \alpha}{2}}  \lsim \lz_2^{-\frac12}$$
in this subcase with our assumptions.

Focus on the second subcase $\lz_2<\lz_1^2$.  Using integration by parts we get
\begin{equation*}
    \begin{aligned}
    \cK_{2,1}(\lambda) 
    & \left.= - \lambda_2^{-1}  J_{\frac{q}{2}}\left(\lambda_1\left(1-r^{\frac{\alpha}{2}}\right)^{\frac{1}{\az}}\right)\left(1-r^{\frac{\alpha}{2}}\right)^{\frac{q}{2\az}}J_{\frac{m}{2}}\left(\lambda_2 r\right) r^{\frac{m}{2}} \varphi\right|_0 ^{r_\delta} \\
    & \qquad+  \lambda_2^{-1} \int_0^{r_\delta} J_{\frac{q}2}\left(\lz_1 \left(1-r^{\frac{\alpha}{2}}\right)^{\frac{1}{\alpha}} \right) \left(1-r^{\frac{\alpha}{2}}\right)^{\frac{q}{2 \alpha}} J_{\frac{m}{2}}\left(\lambda_2 r\right) r^{\frac{m}{2}} \varphi' d r \\
    & \qquad+ \int_0^{r_\delta} J_{\frac{q}{2}}\left(\lambda_1\left(1-r^{\frac{\alpha}{2}}\right)^{\frac{1}{\alpha}}\right)\left(1-r^{\frac{\alpha}{2}}\right)^{\frac{q}{2 \alpha}} J_{\frac{m}{2}-1}\left(\lambda_2 r\right) r^{\frac{m}{2}} \varphi d r .
    \end{aligned}
\end{equation*}
The first resulting term can be bounded by $\lz_2^{-2}\lz_1^\frac12$ if we use  \eqref{ff06}; while the second can also be controlled by this amount via Lemmas  \ref{vand} and  \ref{cond2} (i) (note that  $\supp\vhi'\subset[1/4,1/2]$ bringing no singularity to the integrand). To assess the third term, we employ  \eqref{ff11} with $N$-term and $2$-term expansions for $J_{\frac{q}{2}}\left(\lambda_1\left(1-r^{\frac{\alpha}{2}}\right)^{\frac{1}{\alpha}}\right)$ and $J_{\frac{m}{2}-1}\left(\lambda_2 r\right)$ respectively)  and find that, for $N\in\nn^*$,
\begin{equation*}
    \begin{aligned}
    \left|{\cK_{2,1}}(\lambda)\right| &\lsim   \lambda_2^{-\frac{1}{2}} \lambda_1^{-\frac{1}{2}} \left[ \sum_{ \pm} \sum_{j=0}^{N-1} \sum_{k=0}^1\left|\int_0^{r_\dz} e^{i  \psi_\lambda^\pm(r)} r^{\frac{m-1}{2}}\left(1-r^{\frac{\alpha}{2}}\right)^{\frac{q-1}{2 \alpha}}\left(\lambda_1\left(1-r^{\frac{\alpha}{2}}\right)^{\frac{1}{\alpha}}\right)^{-j}\left(\lambda_2 r\right)^{-k} \varphi d r\right| \right.\\
    &\qquad +\lambda_2^{-2} \int_0^{r_\dz} r^ {\frac{m-5}{2}}\left(1-r^{\frac{\alpha}{2}}\right)^{\frac{q-1}{2 \alpha}} \varphi d r \\
    &\qquad \left.+  \int^{r_\dz}_0 r^{\frac{m-1}{2}}\left(1-r^{\frac{\alpha}{2}}\right)^{\frac{q-3}{2 \alpha}}\left(\lambda_1\left(1-r^{\frac{\alpha}{2}}\right)^{\frac{1}{\alpha}}\right)^{-N} \varphi d r\right] .
    \end{aligned}
\end{equation*} 
Similarly, fixing large $N$ and using  \eqref{LB1}  the last two terms are  majorized by $\lz_2^{-\frac52}\lz_1^{-\frac12}  .$ Turn to the first term (denoted by $FI(\lz)$). If $R_*\ge r_\dz$, then by   Lemmas  \ref{vand} and   \ref{cond2} (i)    with  \eqref{LB1}  we have  in this subcase
$$FI(\lz)\lsim \lambda_2^{-\frac{1}{2}} \lambda_1^{-\frac{1}{2}}\cdot \lz_2^{-1}\lsim \lz_2^{-2}\lz_1^\frac12.$$ 
If $R_*< r_\dz$, we split the integration interval as  $(0,R_*)\cup(R_*,r_\dz)$. The first part can be handled by a similar argument; while the second is bounded by 
\begin{equation*}
    \lambda_2^{-\frac{1}{2}} \lambda_1^{-\frac{1}{2}}   \sum_{j=0}^{N-1} \sum_{k=0}^1 \lz_1^{-\frac{j}{q+1}} \lz_2^{-k} \lz_2^{-\frac12}(\lz_2\lz_1^{-1})^{-\frac{\az}{2\az-2}} \left(1-R_*^{\frac{\alpha}{2}}\right)^{\frac{q-1}{2 \alpha}}
\end{equation*}
via Lemmas  \ref{vand} (with  \eqref{ff24}) and  \eqref{LB1}, which is majorized by 
$$\lz_2^{-1}\lz_1^{-\frac12}\left({\lambda_2}{\lambda_1^{-1}}\right)^{- \frac{\alpha+q-1}{2 \alpha-2}}\lsim \lz_2^{-2}\lz_1^\frac12$$
 from  \eqref{ff45}. 

\textit{(2) Let $\az<2$ and $ \frac{q+1}{2}<\az\le\frac{q+2}{2}$.} Note that we have $q=2$, $\az\in(3/2,2)$ and $\sz=1/3$. The proof is similar to that of the former situation; cf. also the estimation of $\cK_{1,2}$ in \textit{Step 2} of  Subsubsection  \ref{SSS}. We  omit the details.

{\bf Case (II):  $\lz_2>\lz_1^q $.}
Recall that $q\ge2$ and we always assume that $\az\le\frac{q}{2}+1$ in this subsubsection. An integration by parts with  \eqref{ff12}  gives 
\begin{align*}
    \cK_{2}(\lambda)&= \frac12 \lz_2^{-1}\lz_1 \left[\lambda_2^{-1} J_{\frac{m}{2}+1}\left(\lambda_2 r\right) \left. J_ {\frac{q}{2}-1}\left(\lambda_1\left(1-r^{\frac{\alpha}{2}}\right)^{\frac{1}{\alpha}}\right)\left(1-r^{\frac{\alpha}{2}}\right)^{\frac{q+2}{2 \alpha}-1} r^{\frac{\alpha+m}{2}-1} \varphi\right|_0 ^1 \right. \nonumber\\
    &\quad -\lambda_2^{-1} \int_0^1 J_{\frac{m}{2}+1}\left(\lambda_2 r\right) J_ {\frac{q}{2}-1}\left(\lambda_1\left(1-r^{\frac{\alpha}{2}}\right)^{\frac{1}{\alpha}}\right)\left(1-r^{\frac{\alpha}{2}}\right)^{\frac{q+2}{2 \alpha}-1} r^{\frac{\alpha+m}{2}-1} \varphi' d r    \\
    & \quad+\frac{1}{2} \lambda_2^{-1} \lambda_1 \int_0^1 J_{\frac{m}2+1}\left(\lambda_2 r\right) J_ {\frac{q}{2}-2}\left(\lambda_1\left(1-r^{\frac{\alpha}{2}}\right)^{\frac{1}{\alpha}}\right)\left(1-r^{\frac{\alpha}{2}}\right)^{\frac{q+4}{2 \alpha}-2} r^{\frac{2 \alpha+m}{2}-2} \varphi d r   \nonumber \\
    & \left.\quad-\lambda_2^{-1} \int_0^1 J_{\frac{m}2+1}\left(\lambda_2 r\right) J_ {\frac{q}{2}-1}\left(\lambda_1\left(1-r^{\frac{\alpha}{2}}\right)^{\frac{1}{\alpha}}\right)\left(1-r^{\frac{\alpha}{2}}\right)^{\frac{q+2}{2 \alpha}-2} r^{\frac{\alpha+m}{2}-2}\left(\frac{\alpha}{2}-2+r^{\frac{\alpha}{2}}\right)\varphi dr\right] \nonumber\\ 
    & =:\sum_{j=1}^4 \cK_{2}^{(j)}(\lz).
\end{align*}
Note that  $\supp \vhi\cap[0,1]\subset[1/4,1]$. By  \eqref{ff05} we have  
\begin{equation*}
    \left| \cK_{2}^{(1)}(\lz) \right| +  \left| \cK_{2}^{(2)}(\lz) \right| \lsim \lz_2^{-1}\lz_1 \left(\lz_2^{-\frac32} + \lz_2^{-\frac32}\lz_1^{-\frac12} \right) \lsim \lz_2^{-2} \lz_1^{\frac12}.
\end{equation*}
And from \eqref{ff05}-\eqref{ff06}  one sees that 
\begin{equation*}
    \begin{aligned}
    \left| \cK_{2}^{(4)}(\lz) \right| &\lsim  \lz_2^{-1}\lz_1\cdot \lambda_2^{-1} \lambda_2^{-\frac{1}{2}} \left(\int_{1-\lambda_1^{-\alpha}}^1 \lambda_1^{\frac{q}{2}-1}\left(1-r^{\frac{\alpha}{2}}\right)^{\frac{q+2}{2 \alpha}-2+\frac{q-2}{2 \alpha}} \left|\frac{\alpha}{2}-2+r^{\frac{\alpha}{2}}\right| d r \right. \\
    &\qquad  + \left. \int_0^{1-\lambda_1^{-\alpha}}\lambda_1^{-\frac{1}{2}}\left(1-r^{\frac{\alpha}{2}}\right)^{\frac{q+1}{2 \alpha}-2} d r \right).
    \end{aligned}
\end{equation*}
The second integral in the brace is actually bounded under our assumption that $\az\le\frac{q}{2}+1$. For the first one, when $\az<q$ it has the upper bound
\begin{equation*}
    \lambda_1^{\frac{q}{2}-1} \int_{1-\lambda_1^{-\alpha}}^1\left(1-r^{\frac{\alpha}{2}}\right)^{\frac{q}{\alpha}-2} d r \sim \lambda_1^{\frac{q}{2}-1} \cdot \lambda_1^{\alpha-q} \lesssim 1 ;
\end{equation*}
alternatively when $\az\ge q$, we must have $ \az=q=2$, whence  $\frac{\alpha}{2}-2+r^{\frac{\alpha}{2}}= r^{\frac{\alpha}{2}}-1$ and the first integral then can be controlled by 
\begin{equation*}
    \lambda_1^{\frac{q}{2}-1}  \int_{1-\lambda_1^{-\alpha}}^1\left(1-r^{\frac{\alpha}{2}}\right)^{-1} \cdot\left(1-r^{\frac{\alpha}{2}}\right) d r \lesssim 1 .
\end{equation*}
This implies that $ \left| \cK_{2}^{(4)}(\lz) \right| \lsim \lz_2^{-2} \lz_1^{\frac12}$. For $\cK_{2}^{(3)}$, using  \eqref{ff05}-\eqref{ff06} again we obtain that 
\begin{align*}
  \left| \cK_{2}^{(3)}(\lz) \right| &\lsim  (\lambda_2^{-1} \lambda_1)^2 \cdot \lambda_2^{-\frac{1}{2}}\left(\int_{1-\lambda_1^{-\alpha}}^1\left|J_{ \frac{q}{2}-2 } \left(\lambda_1\left(1-r^{\frac{\alpha}{2}}\right)^{\frac{1}{\alpha}}\right)\right|\left(1-r^{\frac{\alpha}{2}}\right)^{\frac{q+4}{2 \alpha}-2} d r \right.\\
  &\qquad \left.+\int_0^{1-\lambda_1^{-\alpha}} \lambda_1^{-\frac{1}{2}}(1-r^\frac{\az}2)^{\frac{q+3}{2 \alpha}-2} d r\right).
\end{align*}
The second term in the brace is majorized by $\lambda_1^{-\frac{1}{2}}$. For the first one, when  $\az<q$ it has the upper bound 
\begin{equation*}
    \int_{1-\lambda_1^{-\alpha}}^1 \lambda_1^{\frac{q-4}{2}}\left(1-r^{\frac{\alpha}{2}}\right)^{\frac{q-4}{2 \alpha}+\frac{q+4}{2 \alpha}-2} d r \sim \lambda_1^{\frac{q-4}{2}-q+\alpha} \lesssim \lambda_1^{-1},
\end{equation*}
since $\az\le\frac{q}{2}+1$; when $\az\ge q$ (so $\az=q=2$), noting that $J_{-1}= -J_{1}$ via  \eqref{ff13}, thus it can be bounded by
\begin{equation*}
    \int_{1-\lambda_1^{-2}}^1 \lambda_1(1-r)^{\frac{1}{2}} \cdot(1-r)^{-\frac{1}{2}} d r \lesssim \lambda_1^{-1} .
\end{equation*}
These estimates and the assumption $\lz_2>\lz_1^q$ then lead to that 
$$ \left| \cK_{2}^{(3)}(\lz) \right| \lsim  (\lambda_2^{-1} \lambda_1)^2 \cdot \lambda_2^{-\frac{1}{2}} \lz_1^{-\frac{1}{2}} \le \lz_2^{-2} \lz_1^{\frac12}.$$
Finally, we have verified  \eqref{goalF} and completed the proof of Lemma  \ref{lem25}.

\phantomsection\addcontentsline{toc}{section}{Acknowledgements}\section*{Acknowledgements}
This work is partially supported by the China Postdoctoral Science
Foundation (Grant No.~2026M793367), and the National Natural Science Foundation of China (Grant No. 12431006). The author would like to thank Prof. Sibei Yang for the kind support and suggestions,  and Ye Zhang for the useful remarks on nilpotent Lie groups. 

\vspace{.5cm}
\begin{small}
\noindent{\bf Data Availability}\quad Not applicable. No datasets were generated or analysed during the current study.
\end{small}

\section*{Declarations}
\begin{small}
\noindent{\bf Conflict of interest}\quad The author declares that there is no conflict of interest.
\end{small}

\phantomsection\addcontentsline{toc}{section}{References}
\bibliographystyle{abbrv}
\bibliography{LPCP2604}

\vspace{1cm}\mbox{}\\\vspace{.2cm}\noindent
Sheng-Chen Mao (\textit{Corresponding author})\\
School of Mathematics and Statistics\\
Lanzhou University \\
No. 222 Tianshui South Road\\
Lanzhou 730000, P.R. China \\
\noindent
\begin{tabular}{@{}ll@{}}
{\it E-Mails:}&{\ttfamily maoshengchen@lzu.edu.cn; maosci@163.com }
\end{tabular}

\end{document}

%% file: MyOperators.tex
\def\bbf{{\mathbb F}}
\def\bbg{{\mathbb G}}
\def\ii{{\mathbb{I}}}
\def\jj{{\mathbb J}}

\def\nn{{\mathbb N}}
\def\oo{{\mathbb O}}
\def\rr{{\mathbb R}}
\def\uu{{\mathbb U}}

\def\zz{{\mathbb Z}}

\def\bB{{\mathbf B}}

\def\bS{{\mathbf S}}

\def\cB{\mathcal{B}}

\def\cE{\mathcal{E}}

\def\cI{\mathcal{I}}
\def\cJ{{\mathcal J}}
\def\cK{{\mathcal K}}
\def\cM{\mathcal{M}}
\def\cN{\mathcal{N}}

\def\pol{\mathrm{pol}}

\def\az{\alpha}
\def\bz{\beta}
\def\dz{\delta}
\def\ez{\epsilon}

\def\lz{\lambda}

\def\Oz{\Omega}
\def\tz{\theta}
\def\vez{\varepsilon}
\def\sz{\sigma}
\def\gz{\gamma}
\def\Gz{\Gamma}

\def\T{{\mathrm{T}}}

\def\X{{\mathrm{X}}}

\def\lsim{\lesssim}
\def\gsim{\gtrsim}

\def\bS{\mathbf{S}}

\def\hh{{\mathbb{H}}}

\def\fg{\mathfrak{g}}

\def\vol{\operatorname{vol}}
\def\supp{\operatorname{supp}}
\def\diag{\operatorname{diag}}

\def\fall{\forall\,}
\def\one{\mathbf{1}}
\def\pt{\partial}
\def\trp{^\mathsf{T}}
\def\vhi{\varphi}
\def\bsl{\backslash}

\def\caz{\mathbf{C_\az}}
\def\bdz{\boldsymbol{\delta}}